\definecolor{linkblue}{HTML}{003d73}
\definecolor{linkgreen}{HTML}{006161}
\definecolor{linkred}{HTML}{a11950}
\newcolumntype{L}[1]{>{\raggedright\let\newline\\\arraybackslash\hspace{0pt}}m{#1}}
\newcolumntype{C}[1]{>{\centering\let\newline\\\arraybackslash\hspace{0pt}}m{#1}}
\newcolumntype{R}[1]{>{\raggedleft\let\newline\\\arraybackslash\hspace{0pt}}m{#1}}
\let\mcnewpage=\newpage
\newcommand{\TrickSupertabularIntoMulticols}{%
\renewcommand\newpage{%
    \if@firstcolumn%
        \hrule width\linewidth height0pt%
            \columnbreak%
        \else%
          \mcnewpage%
        \fi%
}%
}
\newtheorem{theorem}{Theorem}
\newtheorem{proposition}[theorem]{Proposition}
\newtheorem{corollary}[theorem]{Corollary}
\theoremstyle{definition}
\newtheorem{conjecture}[theorem]{Conjecture}
\newcommand{\R}{\mathbb{R}}
\newcommand{\stick}{\operatorname{stick}}
\newcommand{\bridge}{\operatorname{b}}
\newcommand{\superbridge}{\operatorname{sb}}
\title{New Superbridge Index Calculations from Non-Minimal Realizations}
\author{Clayton Shonkwiler}
\affil{Department of Mathematics, Colorado State University, Fort Collins, CO}
\date{}
\begin{document}

\maketitle

\begin{abstract}
	Previous work~\cite{shonkwiler_new_2020} used polygonal realizations of knots to reduce the problem of computing the superbridge number of a realization to a linear programming problem, leading to new sharp upper bounds on the superbridge index of a number of knots. The present work extends this technique to polygonal realizations with an odd number of edges and determines the exact superbridge index of many new knots, including the majority of the 9-crossing knots for which it was previously unknown and, for the first time, several 12-crossing knots. Interestingly, at least half of these superbridge-minimizing polygonal realizations do not minimize the stick number of the knot; these seem to be the first such examples. \Cref{sec:table} gives a complete summary of what is currently known about superbridge indices of prime knots through 10 crossings and \Cref{sec:exact values} gives all knots through 16 crossings for which the superbridge index is known.
\end{abstract}

\section{Introduction} 
\label{sec:introduction}

Given a tamely embedded closed curve $\gamma$ in $\R^3$, its \emph{superbridge number} $\superbridge(\gamma)$ is the maximum number of local maxima of any projection of $\gamma$ to a line. For a knot type $K$, its \emph{superbridge index} $\superbridge[K]$ is the minimum superbridge number of any realization of $K$. This knot invariant was first defined by Kuiper~\cite{kuiper_new_1987}, and was the first example of a so-called \emph{superinvariant}~\cite{adams_introduction_2002,Adams:2009ha}. 

Although Kuiper determined it for all torus knots, the superbridge index is generally quite hard to compute; for example, prior to this work it was known for only 49 of the 249 nontrivial knots in the Rolfsen table. The goal here is to determine the superbridge index of a number of knots, including 15 of the 27 9-crossing knots for which it was previously unknown and, for the first time, for some 12-crossing knots.

\begin{theorem}\label{thm:main}
	The knots $9_3$, $9_4$, $9_6$, $9_9$, $9_{11}$, $9_{13}$, $9_{17}$, $9_{18}$, $9_{22}$, $9_{23}$, $9_{25}$, $9_{27}$, $9_{30}$, $9_{31}$, and $9_{36}$ have superbridge index equal to 4, and the knots $11n_{72}$, $11n_{77}$, $12n_{60}$, $12n_{66}$, $12n_{219}$, $12n_{225}$, and $12n_{553}$ have superbridge index equal to 5.
\end{theorem}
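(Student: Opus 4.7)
The plan is to establish, for each knot $K$ in the list, matching upper and lower bounds on $\superbridge[K]$. The lower bounds come from the literature: each of the listed $9_i$ knots has bridge index $3$, so Kuiper's inequality $\superbridge[K] \geq \bridge[K]+1$ gives $\superbridge \geq 4$, and each of the $11n$ and $12n$ knots listed has bridge index $4$, giving $\superbridge \geq 5$. Thus the entire content of the theorem is producing realizations that meet these bounds, and the proof will reduce to exhibiting one explicit polygon per knot together with a verification of its superbridge number.

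The main technical ingredient is the LP reduction from \cite{shonkwiler_new_2020}, which I would first extend to polygons with an odd number of edges. The idea is that for an $n$-gon $P$ with vertices $v_1,\dots,v_n$, the function $u \mapsto (\text{\# local maxima of } \langle \cdot, u\rangle|_P)$ is locally constant on the open cells of the arrangement of great circles cut out by the conditions $\langle v_{i+1}-v_i, u\rangle = 0$. On each cell, the local-maximum count is fixed, so computing $\superbridge(P)$ reduces to enumerating cells and, for each target count $k$, solving a linear feasibility problem in $u$ that asks whether $k$ prescribed edges can simultaneously be "up-edges ending in a down-edge" while $u$ ranges over that cell. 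The parity of $n$ enters only through how the indices wrap; the LP framework carries over with minor bookkeeping.

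Next I would produce, for each listed knot, an explicit polygonal realization. I would search a database of known stick realizations (minimal or not), apply random perturbations or simulated annealing to shrink the superbridge number of the resulting polygon while preserving knot type, and verify the knot type of each candidate by computing a strong invariant such as the HOMFLY polynomial. For each accepted polygon, running the extended LP gives a certified value of $\superbridge(P)$; matching the lower bound then completes the proof for that knot. The theorem's claim that at least half of the resulting polygons are \emph{non-minimal} as stick realizations suggests that the search must not be confined to stick-minimal polygons, which is precisely the novelty over prior work.

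The main obstacle is not the verification step, which is purely a finite LP computation, but the search step: for the $12n$ knots in particular, realizations with $\superbridge = 5$ had not previously been found, so the proof hinges on an effective search strategy over polygons whose stick number may exceed $\stick[K]$. The hard part will therefore be calibrating the search (edge count, perturbation scale, acceptance criterion) so that a $5$-superbridge polygon is actually discovered; once one is in hand, the remaining work is the deterministic LP verification plus a table of coordinates.
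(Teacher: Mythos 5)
Your upper-bound strategy is essentially the paper's: find polygonal realizations (not necessarily stick-minimal) by large-scale random search, and certify that the superbridge number of each realization is strictly less than $\lfloor n/2\rfloor$ by a finite linear-feasibility computation, extended to odd $n$ by handling each cyclic sign pattern $+,+,-,+,-,\dots,+,-$ separately. The paper phrases the certification dually --- by Gordan's theorem of the alternative, infeasibility of each system $\vec{v}^T E_j > 0$ is witnessed by an explicit nonnegative null vector $\vec{u}_j$ of $E_j$, and the matrix $U=[\vec{u}_1|\cdots|\vec{u}_{2k+1}]$ is the published certificate --- whereas you propose enumerating cells of the great-circle arrangement and solving primal LPs; both work, but the dual certificate has the advantage of being independently checkable by a single exact matrix--vector multiplication. (Also, for $11n_{72}$ and $12n_{553}$ no LP is needed at all: an $11$-stick realization plus Jin's bound $\superbridge \leq \frac12\stick$ already gives $\superbridge\leq 5$.)

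The genuine gap is in your lower bounds. You assert that each listed $9$-crossing knot has bridge index $3$, so that Kuiper's strict inequality $\bridge[K]<\superbridge[K]$ yields $\superbridge\geq 4$. That premise is false for most of them: $9_3$, $9_4$, $9_6$, $9_9$, $9_{11}$, $9_{13}$, $9_{17}$, $9_{18}$, $9_{23}$, $9_{27}$, and $9_{31}$ are all $2$-bridge (rational) knots, so Kuiper gives only $\superbridge\geq 3$, and your argument leaves open the possibility that these knots have superbridge index $3$. The paper closes this gap with a much stronger input, the Jeon--Jin classification of possible $3$-superbridge knots: every knot other than $3_1$, $4_1$, and a short explicit list of $5$-- through $8$--crossing knots has superbridge index $\geq 4$. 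Without invoking that theorem (or some equivalent), the matching lower bound of $4$ for the two-bridge knots in the list does not follow, and the equalities $\superbridge = 4$ are not established. The bridge-index argument is fine for the $11n$ and $12n$ knots, which are indeed $4$-bridge, and for the genuinely $3$-bridge entries such as $9_{22}$, $9_{25}$, $9_{30}$, and $9_{36}$.
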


\begin{figure}[t]
	\centering
		\subfloat[$9_{25}$]{\includegraphics[height=2in,width=2in,keepaspectratio,valign=c]{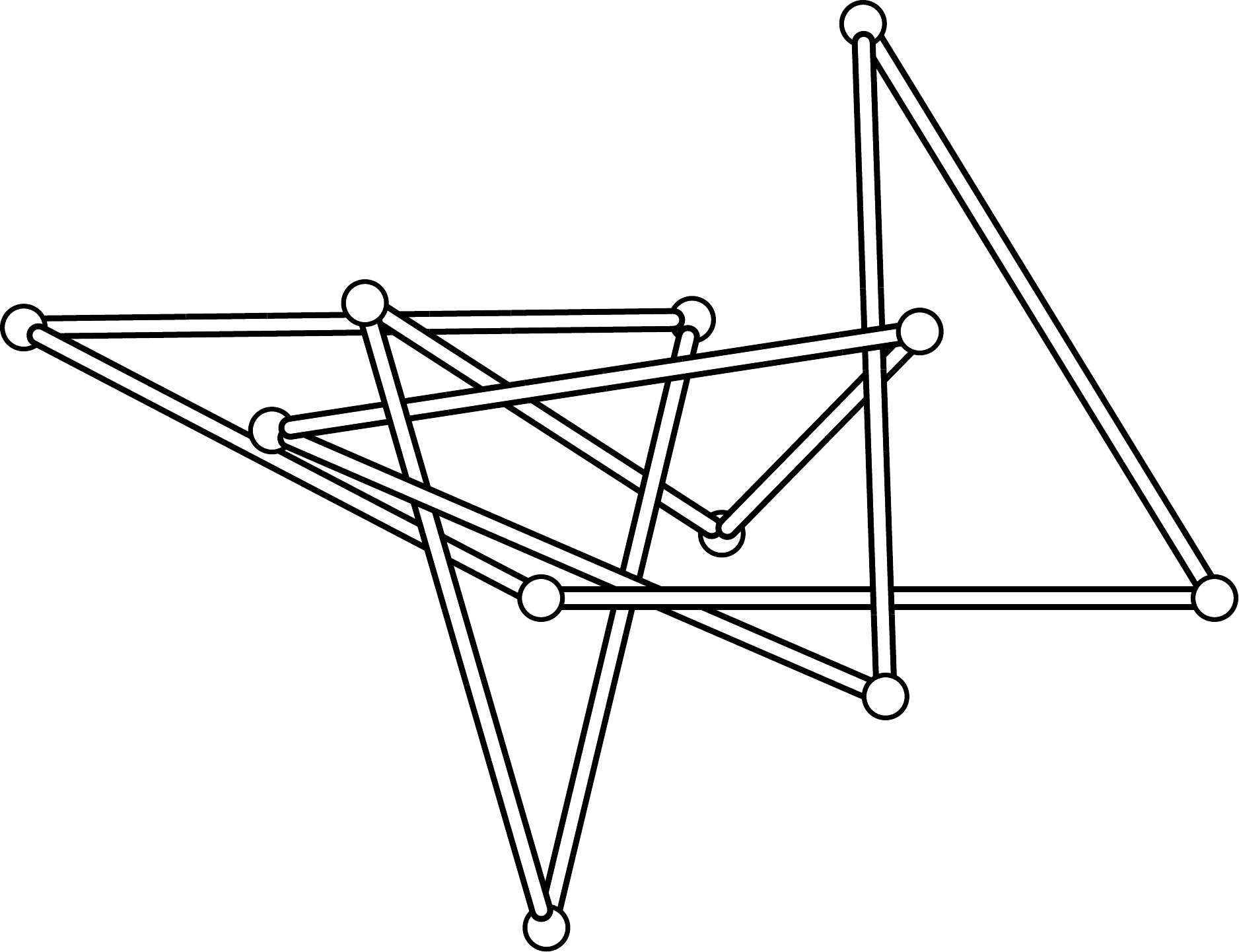}} \qquad \qquad
		\subfloat[$12n_{66}$]{\includegraphics[height=2in,width=2in,keepaspectratio,valign=c]{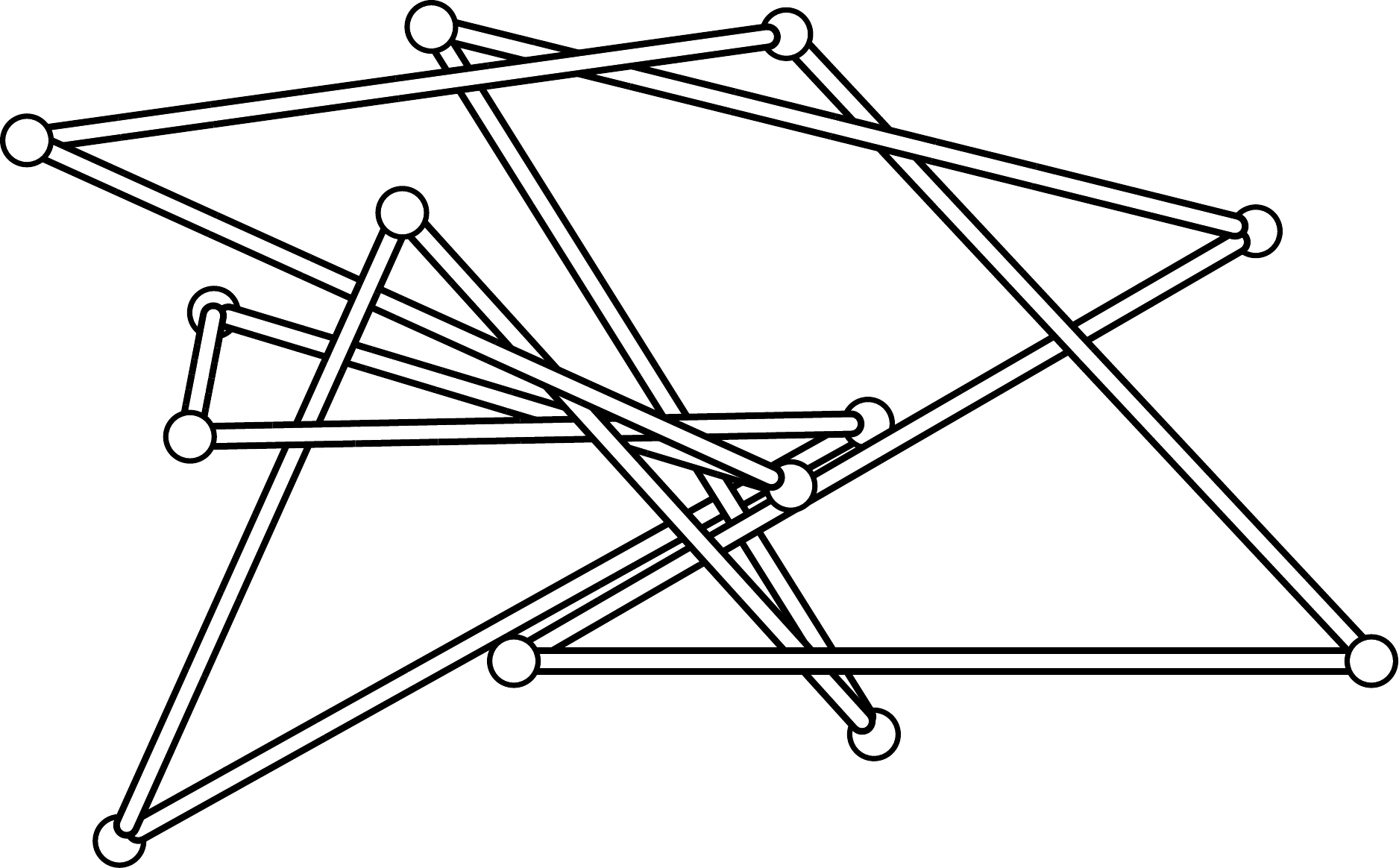}}
	\caption{Superbridge-minimizing realizations of $9_{25}$ and $12n_{66}$. Both are shown in orthographic perspective, viewed from the direction of the positive $z$-axis relative to the vertex coordinates given in \Cref{sec:coords}.}
	\label{fig:9_25 and K12n66}
\end{figure}

For each knot, the strategy is to find a (polygonal) realization of the knot with superbridge number equal to a known lower bound; the realizations of $9_{25}$ and $12n_{66}$ are shown in \Cref{fig:9_25 and K12n66}, and visualizations and coordinates for all knots mentioned in the theorem are given in \Cref{sec:coords}. These realizations were found by generating very large ensembles of polygonal knots in tight confinement using the approach described in~\cite{TomClay} and implemented in {\tt stick-knot-gen}~\cite{stick-knot-gen}. Overall, I generated more than 1 trillion random 9-, 10-, 11-, 12-, and 13-gons in the search for these examples. Interestingly, at least half of these superbridge-minimizing examples do not minimize the stick number of the knot. For example, the polygonal realization of $9_{25}$ shown in \Cref{fig:9_25 and K12n66} has 11 edges, but there are polygonal realizations of $9_{25}$ with only 10 edges~\cite{TomClay}.

Aside from the computational challenge of generating such large ensembles and determining knot types, the main difficulty is to identify potential examples and to rigorously compute their superbridge numbers. Different knots in the statement of the theorem require different arguments, so the proof will proceed in the following three sections, each of which groups together knots requiring the same argument, sorted in order of increasing complexity. Specifically, \Cref{thm:main} is a direct consequence of \Cref{cor:11n72 and 12n553}, \Cref{prop:9_22 11n77 12n60 12n219}, and \Cref{prop:rest}.

For reference, \Cref{sec:table} gives complete, up-to-date information about the superbridge index of all knots from the Rolfsen table, and \Cref{sec:exact values} lists all prime knots through 16 crossings for which the exact superbridge index is known. This information, along with the coordinate data from \Cref{sec:coords}, is also available from the {\tt stick-knot-gen} project~\cite{stick-knot-gen}.


\section{Stick number bounds} 
\label{sec:stick bounds}

The easiest way to get an upper bound on superbridge index is using Jin's bound relating superbridge index to stick number. Recall that the \emph{stick number} $\stick[K]$ of a knot $K$ is the minimum number of edges needed for any polygonal realization of the knot. 

\begin{theorem}[Jin~\cite{jin_polygon_1997}]\label{thm:JinBound}
	For any knot $K$, $\superbridge[K] \leq \frac{1}{2} \stick[K]$.
\end{theorem}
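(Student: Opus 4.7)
The plan is to prove the inequality one polygonal realization at a time: for any polygonal representative $P$ of $K$ with exactly $n$ edges I would show that $\superbridge(P) \leq n/2$, so that taking $P$ to be a stick-minimizing realization immediately yields $\superbridge[K] \leq \superbridge(P) \leq \stick[K]/2$.

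To bound $\superbridge(P)$, I would fix a unit vector $u \in S^2$ and study the height function $h_u = \langle \cdot, u \rangle$ restricted to $P$. Since $P$ has $n$ edges and $n$ vertices and $h_u$ is linear on each edge, the composition is piecewise linear on $S^1$ and every strict local maximum must occur at a vertex. The key observation is then the elementary fact that on a circle, strict local maxima and strict local minima of a continuous function must alternate; writing $M$, $m$, $T$ for the number of strict max, strict min, and monotone-transition vertices, we have $M = m$ and $M+m+T = n$, so $M = (n-T)/2 \leq \lfloor n/2 \rfloor$. This is the whole content of the bound at a generic direction.

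The main obstacle is the non-generic case, where $u$ is orthogonal to an edge of $P$ (so $h_u$ is constant on that edge) or where two vertices share the same height. I would handle this by restricting to the complement of the closed, measure-zero bad set in $S^2$: on the open dense set of generic directions, vertex heights are pairwise distinct and no edge is flat, and the alternation argument above applies cleanly to give $M \leq \lfloor n/2 \rfloor$. For non-generic $u$ one can either appeal to Kuiper's convention of counting only strict local maxima (under which plateaus contract to single critical points, so alternation still gives the same bound), or perturb $u$ to a nearby generic direction and note that the count at the non-generic $u$ cannot exceed the limit of counts along the perturbation. Either way, the bound $\superbridge(P) \leq n/2$ holds uniformly in $u$, and applying this to any $P$ realizing $\stick[K]$ completes the proof.
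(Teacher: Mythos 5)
Your proposal is correct and follows essentially the same route the paper sketches for Jin's bound: every local extremum of the linear height function must occur at one of the $n$ vertices, and since strict maxima and minima alternate around the closed curve the maxima account for at most $\frac{n}{2}$ of them, with degenerate directions handled by perturbation. Applying this to a stick-minimizing realization gives $\superbridge[K] \leq \frac{1}{2}\stick[K]$ exactly as intended.
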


Both $11n_{72}$ and $12n_{553}$ have stick number no bigger than 11.

\begin{proposition}\label{prop:11n72 and 12n553}
	$\stick[11n_{72}],\stick[12n_{553}] \leq 11$.
\end{proposition}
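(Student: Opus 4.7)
The natural strategy is simply to exhibit explicit 11-edge polygonal realizations of $11n_{72}$ and $12n_{553}$. Since the author has already described generating trillions of random polygons with \texttt{stick-knot-gen}~\cite{stick-knot-gen} and has promised coordinate data in \Cref{sec:coords}, the plan is to point to specific 11-gons produced by that search whose vertex coordinates are recorded there.

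The first step is thus to list the two candidate 11-gons (or reference their coordinate tables) and to observe that each is a closed polygon with exactly 11 edges, which by definition gives the stick number bound $\stick[K] \le 11$ provided one can certify the knot type. The second, and only substantive, step is to verify that the first polygon has knot type $11n_{72}$ and the second has knot type $12n_{553}$. This is done by computing a sufficiently discriminating polynomial invariant (typically the HOMFLY-PT polynomial, which distinguishes both of these knots from all other prime knots through 12 crossings) directly from the PD code extracted from the polygon's planar diagram, and comparing with the tabulated values for $11n_{72}$ and $12n_{553}$.

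The main obstacle is really just the reliability of this identification step: one must trust that the chosen projection of the polygon produces a valid diagram (no triple points, no tangencies, all crossings transverse and correctly over/under-assigned from the $z$-coordinates) and that the HOMFLY-PT computation is carried out in the standard normalization so that the comparison with KnotInfo is unambiguous. In practice this is handled inside \texttt{stick-knot-gen}, and for the reader it reduces to accepting the output of a well-tested pipeline; no further mathematical argument beyond exhibiting the coordinates and naming the invariant is required.

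I therefore expect the proof to be essentially one sentence plus a pointer to the coordinate tables, with the understanding that knot-type verification is a routine computation on the provided data.
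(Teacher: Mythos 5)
Your proposal matches the paper's proof exactly: the paper simply exhibits explicit 11-vertex coordinate lists for both knots (Figure 2 and the appendix) and relies on the software pipeline to certify the knot types. The extra detail you give about verifying knot type via a polynomial invariant computed from a projection is the implicit content of that certification, so nothing is missing and nothing differs in substance.
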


\begin{proof}
	11-stick realizations of both knots are shown in \Cref{fig:11n72 and 12n553}. The coordinates of these realizations are given in \Cref{sec:coords}.
\end{proof}

\begin{figure}[t]
	\centering
		\subfloat[$11n_{72}$]{\includegraphics[height=2in,width=2in,keepaspectratio,valign=c]{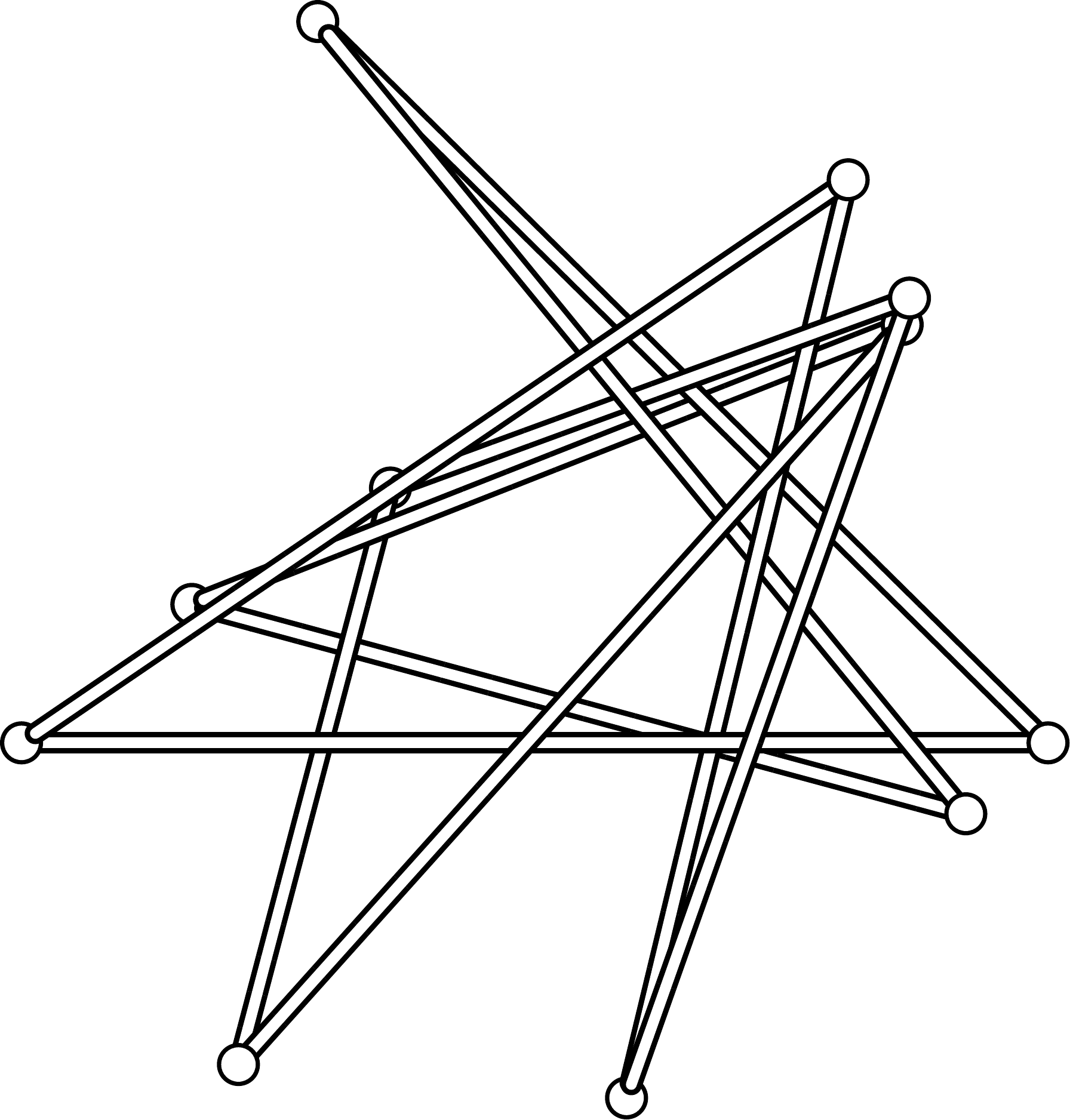}} \qquad \qquad
		\subfloat[$12n_{553}$]{\includegraphics[height=2in,width=2in,keepaspectratio,valign=c]{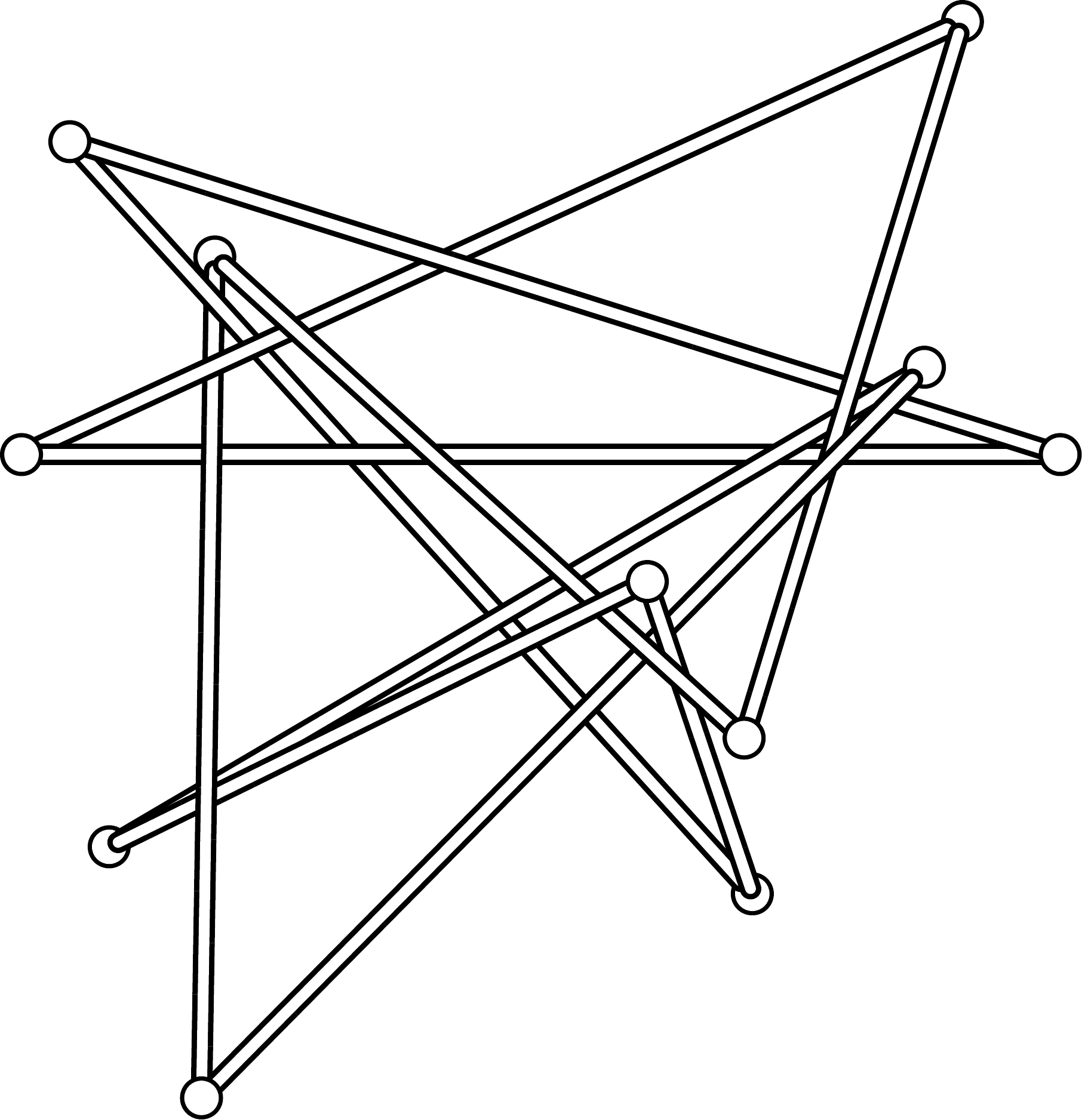}}
	\caption{11-stick realizations of $11n_{72}$ and $12n_{553}$. Both are shown in orthographic perspective, viewed from the direction of the positive $z$-axis relative to the vertex coordinates given in \Cref{sec:coords}.}
	\label{fig:11n72 and 12n553}
\end{figure}

It is then an immediate consequence of \Cref{thm:JinBound} that these knots both have superbridge index $\leq 5$. On the other hand, a useful lower bound on superbridge index comes from the bridge index $\bridge[K]$.

\begin{theorem}[Kuiper~\cite{kuiper_new_1987}]\label{thm:bridge bound}
	For any nontrivial knot $K$, $\bridge[K] < \superbridge[K]$.
\end{theorem}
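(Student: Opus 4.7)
The plan is to argue by contradiction: suppose $\superbridge[K] = \bridge[K] = n$ for a nontrivial knot $K$. Since nontrivial knots have bridge number at least $2$, we have $n \geq 2$. The strategy is to derive a contradiction from the rigidity this equality imposes on a superbridge-minimizing realization of $K$.

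By definition of $\superbridge[K]$, some realization $\gamma$ of $K$ has $\superbridge(\gamma) = n$, so every projection $\gamma \cdot v$ to a line has at most $n$ local maxima. By definition of $\bridge[K]$, any realization satisfies $\bridge(\gamma) \geq \bridge[K] = n$, so the minimum count over all directions is at least $n$. Thus for this particular $\gamma$, the count is exactly $n$ for every generic direction $v \in S^2$. Let $T \colon \gamma \to S^2$ be the unit tangent. Each local extremum of $\gamma \cdot v$ corresponds to an intersection of $T(\gamma)$ with the great circle normal to $v$, and since maxima and minima alternate along the closed curve, the constancy of the maximum count forces $T(\gamma)$ to meet every great circle in exactly $2n$ points. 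Crofton's formula on $S^2$ then forces $TC(\gamma) = 2\pi n$.

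Next I would convert this geometric rigidity into triviality of $K$. Milnor's bound $TC \geq 2\pi \bridge[K]$ is merely saturated here, so it gives no topological conclusion on its own. Instead, the condition that the tangent indicatrix of an embedded curve meets every great circle in a constant number of points is precisely the tightness condition of Chern--Lashof, and a tight embedded $S^1$ in $\R^3$ must be the unknot (for instance via the two-piece property). This contradicts $n = \bridge[K] \geq 2$ and completes the argument.

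The main obstacle is this last step: translating tightness of the indicatrix into unknottedness of $\gamma$. The rigidity extraction and the Crofton computation are largely mechanical, but the topological step requires genuine input from the theory of tight immersions, and is where any proof of the theorem does real work.
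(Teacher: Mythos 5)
The paper does not actually prove this statement---it is quoted from Kuiper---so I am comparing your argument with Kuiper's original one. Your rigidity step is correct and is exactly how that proof begins: if $\superbridge[K]=\bridge[K]=n$ and $\gamma$ is a realization with $\superbridge(\gamma)=n$, then every generic height function on $\gamma$ has at least $n$ local maxima (because $\bridge(\gamma)\geq\bridge[K]=n$) and at most $n$ (because $\superbridge(\gamma)=n$), hence exactly $n$; the tangent indicatrix then meets every great circle in exactly $2n$ points and Crofton gives total curvature exactly $2\pi n$.

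The concluding step, however, has a genuine gap. Chern--Lashof tightness for a closed curve means the total absolute curvature equals $2\pi$, the minimum over \emph{all} closed space curves---equivalently, almost every height function has a single local maximum---and it is this condition that is equivalent to the two-piece property. Your curve has $n\geq 2$ maxima in every direction and total curvature $2\pi n>2\pi$, so it is not tight, and the two-piece property fails for it outright: a generic plane separates it into as many as $2n$ arcs. The result you invoke, that a tight embedded circle is unknotted, is true but vacuous here, since tight circles are exactly convex planar curves. Note also that ``meets every great circle in a constant number of points'' cannot by itself force unknottedness for arbitrary $n$; what makes the configuration impossible is that here $n$ equals the bridge index, so $\gamma$ would \emph{attain} the infimum $2\pi\bridge[K]$ of total curvature over representatives of $K$. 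The input you actually need---and the one Kuiper uses---is Milnor's theorem that this infimum is not attained by any representative of a nontrivial knot type. With that substitution your argument becomes the standard proof; as written, the final step does not go through.
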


Since $11n_{72}$ and $12n_{553}$ are both 4-bridge knots~\cite{Musick:2012uz,blair_wirtinger_2020,knotinfo}, this completely determines the superbridge index of both knots.

\begin{corollary}\label{cor:11n72 and 12n553}
	$\superbridge[11n_{72}] = \superbridge[12n_{553}] = 5$.
\end{corollary}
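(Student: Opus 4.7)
The plan is to combine the upper bound coming from \Cref{prop:11n72 and 12n553} and \Cref{thm:JinBound} with a matching lower bound coming from \Cref{thm:bridge bound} and the known bridge indices of these two knots. Since both bounds meet at $5$, the corollary will follow immediately; the whole argument is effectively a two-line sandwich calculation.

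First I would apply \Cref{thm:JinBound} to the stick-number bound of \Cref{prop:11n72 and 12n553} to get
\[
\superbridge[11n_{72}] \leq \tfrac{1}{2}\stick[11n_{72}] \leq \tfrac{11}{2}, \qquad \superbridge[12n_{553}] \leq \tfrac{1}{2}\stick[12n_{553}] \leq \tfrac{11}{2}.
\]
Since the superbridge index is an integer, both inequalities sharpen to $\leq 5$.

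Next I would cite the bridge-index entries in KnotInfo and the Musick/Blair--Kjuchukova--Velazquez--Villanueva computations to record that $\bridge[11n_{72}] = \bridge[12n_{553}] = 4$. Combining this with \Cref{thm:bridge bound} gives $\superbridge[K] > 4$, hence $\superbridge[K] \geq 5$ for both $K = 11n_{72}$ and $K = 12n_{553}$. Together with the upper bound from the previous step this forces equality, proving the corollary.

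There is no real obstacle: the only non-trivial input is the factual claim that these knots are $4$-bridge, which is already documented in the cited references. Thus the proof will consist of little more than stringing together \Cref{prop:11n72 and 12n553}, \Cref{thm:JinBound}, the bridge-index citations, and \Cref{thm:bridge bound}.
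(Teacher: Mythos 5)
Your proposal is correct and matches the paper's own argument exactly: Jin's bound applied to the $11$-stick realizations gives $\superbridge \leq 5$ by integrality, and Kuiper's strict bridge-index inequality applied to the fact that both are $4$-bridge knots gives $\superbridge \geq 5$.
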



\section{Linear programming bounds} 
\label{sec:linear programming bounds}

The proof of Jin's bound $\superbridge[K] \leq \frac{1}{2}\stick[K]$ (\Cref{thm:JinBound}) is straightforward: the projection of any polygonal closed curve to a line cannot have more critical points than there are vertices along the original curve. In other words, the superbridge number of any $n$-edge polygonal realization of a knot is no more than $\frac{n}{2}$. This is more significant than it might first appear, since it is generally quite challenging to give any useful upper bound on the superbridge number of a closed curve. After all, if the projection of a closed curve $\gamma$ in to a line has $k$ local maxima, this shows that $\superbridge(\gamma) \geq k$: the inequality goes the wrong way! So it is a special situation to have an easily computed certificate (in this case, the count of edges) that a particular realization of a knot has superbridge number $\leq m$ for some concrete $m$.

The problem is that Jin's bound can be arbitrarily bad: while there are only finitely many knots $K$ with $\stick[K] \leq n$ for any $n$~\cite{calvo_geometric_2001,negami_ramsey_1991}, there are infinitely many knots with superbridge index equal to 4 (including all $(2,p)$-torus knots~\cite{kuiper_new_1987}). So it is frequently necessary to find some other way of certifying an upper bound on the superbridge number of a realization.

In a previous paper~\cite{shonkwiler_new_2020}, I developed a new approach based on linear programming, as follows. Suppose a polygonal closed curve has edge vectors $\vec{e}_1, \dots, \vec{e}_n$. Then for any $\vec{v} \in \R^3$, the number of local maxima of the projection of the curve to the line spanned by $\vec{v}$ is the number of times that $\vec{v} \cdot \vec{e}_i$ changes from positive to negative (computed cyclically, so that $\vec{v} \cdot \vec{e}_n > 0$ and $\vec{v} \cdot \vec{e}_1 < 0$ contributes 1 to the count). Jin's bound implies that $\superbridge(\vec{e}_1, \dots , \vec{e}_n) \leq \frac{n}{2}$, with equality if and only if $n$ is even and there is some line onto which every vertex projects to a local minimum or a local maximum; i.e., $\vec{v} \cdot \vec{e}_1, \dots , \vec{v} \cdot \vec{e}_n$ alternates signs. Since $\vec{v}$ can be replaced by $-\vec{v}$, it is no restriction to assume that the alternating sign pattern is $+, -, \dots , +, -$. 

Equivalently, if
\begin{equation}\label{eq:E}
	E := \left[\, \vec{e}_1 \,\,\vline\, -\vec{e}_2 \,\,\vline\,\, \cdots \,\,\vline\,\, \vec{e}_{2k-1} \,\,\vline\, -\vec{e}_{2k} \,\right],
\end{equation}
then $\superbridge(\vec{e}_1 , \dots , \vec{e}_{2k}) < k$ if and only if there is no $\vec{v} \in \R^3$ so that $\vec{v}^T E$ has all positive entries. In other words, infeasibility of a system of linear inequalities provides a stronger upper bound on the superbridge number of a polygonal realization of a knot than that coming from Jin's bound.

This is helpful, because classical theorems of the alternative~\cite{Tucker:1956ve,dantzig_linear_2003} control feasibility of such systems. For example:

\begin{theorem}[Gordan~\cite{gordan_uber_1873}]\label{thm:Gordan}
	If $A$ is a $k \times \ell$ matrix, then exactly one of the following is true:
	\begin{enumerate}
		\item There is $\vec{v} \in \R^k$ so that $\vec{v}^T A$ has all positive entries.
		\item There is a nonzero vector $\vec{u} \in \R^\ell$ with nonnegative entries so that $A \vec{u} = \vec{0}$.
	\end{enumerate}
\end{theorem}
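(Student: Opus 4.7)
The plan is to prove this as a standard theorem of the alternative, in two stages: first showing the two alternatives are mutually exclusive, and then showing at least one must hold via a separating hyperplane argument applied to the columns of $A$.

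For mutual exclusivity, I would suppose both (i) and (ii) hold simultaneously: there exist $\vec{v}$ with $\vec{v}^T A > \vec{0}$ componentwise, and a nonzero $\vec{u} \geq \vec{0}$ with $A\vec{u} = \vec{0}$. Then computing $\vec{v}^T A \vec{u}$ two ways gives an immediate contradiction: the product is $0$ since $A\vec{u} = \vec{0}$, but it is also strictly positive because every entry of $\vec{v}^T A$ is positive and at least one entry of $\vec{u}$ is strictly positive while none are negative. This rules out both holding at once.

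For the main direction, let $\vec{a}_1, \dots, \vec{a}_\ell \in \R^k$ denote the columns of $A$, and consider their convex hull $C \subset \R^k$, which is compact since it is the convex hull of finitely many points. There are two cases depending on whether $\vec{0} \in C$. If $\vec{0} \in C$, then $\vec{0} = \sum_j u_j \vec{a}_j$ with $u_j \geq 0$ and $\sum_j u_j = 1$, so in particular $\vec{u} = (u_1, \dots, u_\ell)$ is nonzero, nonnegative, and satisfies $A\vec{u} = \vec{0}$, giving alternative (ii). If $\vec{0} \notin C$, then because $C$ is compact and convex, the separating hyperplane theorem produces $\vec{v} \in \R^k$ and $\alpha \in \R$ with $\vec{v}^T \vec{x} > \alpha > 0$ for all $\vec{x} \in C$; applied to each column, this gives $\vec{v}^T \vec{a}_j > 0$ for every $j$, i.e., $\vec{v}^T A$ has all positive entries, establishing alternative (i).

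The main obstacle is really just the invocation of the separating hyperplane theorem in the form needed (strict separation of a point from a compact convex set), but this is a well-known fact and the compactness of the convex hull of finitely many vectors makes it immediate. Everything else reduces to the bilinear pairing computation in the first paragraph and the standard description of the convex hull. Since this is a classical result going back to Gordan, the author may well omit the proof entirely or cite a standard reference such as~\cite{dantzig_linear_2003}.
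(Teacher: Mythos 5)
Your proof is correct and complete: the bilinear computation $\vec{v}^T A \vec{u}$ settles mutual exclusivity, and the dichotomy on whether $\vec{0}$ lies in the (compact, hence closed) convex hull of the columns, combined with strict separation, yields exactly the two alternatives. The paper itself gives no proof of this statement---it is quoted as a classical theorem of the alternative with a citation to Gordan and to standard linear programming references---so there is nothing to compare against; your argument is the standard convex-separation proof and would serve perfectly well if one wished to make the paper self-contained.
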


This yields the desired bound on superbridge numbers.

\begin{corollary}[Shonkwiler~\cite{shonkwiler_new_2020}]\label{cor:Gordan}
	Suppose $\vec{e}_1, \dots , \vec{e}_{2k}$ are the edges of a closed polygonal curve in $\R^3$. Then $\superbridge(\vec{e}_1, \dots , \vec{e}_{2k}) < k$ if and only if there exists a nonzero vector $\vec{u} \in \R^n$ with nonnegative entries so that
	\begin{equation} \label{eq:even gordan}
		E \vec{u} = \vec{0},
	\end{equation}
	with the matrix $E$ defined as in \eqref{eq:E}.
\end{corollary}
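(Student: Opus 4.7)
The plan is to combine the reformulation of the superbridge condition given in the paragraph preceding equation~\eqref{eq:E} with a direct application of Gordan's theorem (\Cref{thm:Gordan}) to the matrix $E$.

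First, I would make the equivalence from the discussion before~\eqref{eq:E} completely precise. By Jin's argument, $\superbridge(\vec{e}_1,\dots,\vec{e}_{2k}) \leq k$ always. The condition $\superbridge(\vec{e}_1,\dots,\vec{e}_{2k}) = k$ means that there is a direction $\vec{v} \in \R^3$ whose inner products with the edges cyclically alternate in sign; after possibly replacing $\vec{v}$ with $-\vec{v}$, this means the sign pattern is $+,-,+,\dots,-$, which is exactly the condition that $\vec{v}^T E$ has all positive entries (since odd-indexed columns of $E$ are $\vec{e}_i$ and even-indexed columns are $-\vec{e}_i$). Therefore $\superbridge(\vec{e}_1,\dots,\vec{e}_{2k}) < k$ if and only if no such $\vec{v}$ exists, i.e., alternative~(i) of \Cref{thm:Gordan} fails for the $3 \times 2k$ matrix $E$.

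By Gordan's theorem, alternative~(i) fails if and only if alternative~(ii) holds, i.e., if and only if there is a nonzero $\vec{u} \in \R^{2k}$ with nonnegative entries and $E\vec{u} = \vec{0}$. This is exactly~\eqref{eq:even gordan} (with $n = 2k$), which completes the proof.

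There is really no main obstacle here: once the geometric condition on the projection is recast as the linear-inequality condition $\vec{v}^T E > 0$, the result is an immediate corollary of \Cref{thm:Gordan}. The only care required is in the bookkeeping of signs that justifies the definition of $E$ in~\eqref{eq:E}, and in recording that replacing $\vec{v}$ by $-\vec{v}$ loses no generality because it merely interchanges the two alternating sign patterns.
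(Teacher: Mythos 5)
Your proposal is correct and follows exactly the route the paper takes: the discussion preceding~\eqref{eq:E} reduces the condition $\superbridge(\vec{e}_1,\dots,\vec{e}_{2k}) = k$ to the feasibility of $\vec{v}^T E > 0$, and the corollary is then an immediate application of Gordan's theorem to the $3 \times 2k$ matrix $E$. Your added care about the sign bookkeeping and the harmless replacement of $\vec{v}$ by $-\vec{v}$ matches the paper's own remarks.
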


The vector $\vec{u}$ provides a certificate that the superbridge number of the polygonal realization is strictly less than half the number of vertices. This approach is enough to determine the superbridge indices of four more knots in the statement of \Cref{thm:main}.

\begin{proposition}\label{prop:9_22 11n77 12n60 12n219}
	$\superbridge[9_{22}] = 4$ and $\superbridge[11n_{77}] = \superbridge[12n_{60}] = \superbridge[12n_{219}] = 5$.
\end{proposition}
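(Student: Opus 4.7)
The plan is to combine the bridge bound of \Cref{thm:bridge bound} with the linear programming certificate of \Cref{cor:Gordan}. First, I would look up the bridge indices: each of $9_{22}$, $11n_{77}$, $12n_{60}$, and $12n_{219}$ is known (via KnotInfo and standard references) to have bridge index one less than the claimed superbridge index; in particular $\bridge[9_{22}] = 3$ and $\bridge[11n_{77}] = \bridge[12n_{60}] = \bridge[12n_{219}] = 4$. By \Cref{thm:bridge bound}, these give the lower bounds $\superbridge[9_{22}] \geq 4$ and $\superbridge[11n_{77}], \superbridge[12n_{60}], \superbridge[12n_{219}] \geq 5$.

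For the matching upper bounds, I would exhibit, for each of the four knots, a polygonal realization with an even number $2k$ of edges (with $k = 5$ for $9_{22}$ and $k = 6$ for each of the three others, arising naturally from the $10$-gon and $12$-gon search ensembles described in the introduction), together with the corresponding matrix $E$ from \eqref{eq:E} built from the edge vectors. The coordinate data live in \Cref{sec:coords}; from these one computes $E$ directly. To invoke \Cref{cor:Gordan}, I would then exhibit an explicit nonzero, entrywise nonnegative certificate $\vec{u} \in \R^{2k}$ with $E\vec{u} = \vec{0}$. Such a $\vec{u}$ certifies $\superbridge < k$ of the realization, hence $\superbridge[9_{22}] \leq 4$ and $\superbridge[11n_{77}], \superbridge[12n_{60}], \superbridge[12n_{219}] \leq 5$. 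Combining with the bridge lower bounds gives equality in each case.

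The main obstacle is producing the certificates $\vec{u}$ in the first place, since the realizations in hand come from a Monte Carlo search and are only approximately superbridge-minimizing. Concretely, for each candidate realization I would solve the linear feasibility problem $E\vec{u} = \vec{0}$, $\vec{u} \geq \vec{0}$, $\vec{u} \neq \vec{0}$ (for example by normalizing $\mathbf{1}^T \vec{u} = 1$ and running a standard LP solver); the existence half of \Cref{cor:Gordan} guarantees a solution exists whenever the realization genuinely has superbridge below $k$. Once such a numerical $\vec{u}$ is located, I would round its entries to nearby rationals and verify $E \vec{u} = \vec{0}$ exactly, so that the final certificate is checkable by hand (or at least by exact arithmetic). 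The remaining verifications, that the realizations have the claimed knot types, are carried out using the standard tools already used to generate the ensembles in~\cite{stick-knot-gen}, so I would simply cite those computations rather than reproduce them. Displaying the four certificates, together with pictures and coordinates of the polygons, then completes the proof.
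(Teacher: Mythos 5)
Your proposal is correct and matches the paper's proof in all essentials: the upper bounds come from even-sided realizations (a 10-gon for $9_{22}$, 12-gons for the others) with explicit nonnegative certificates $\vec{u}$ solving $E\vec{u}=\vec{0}$ as in \Cref{cor:Gordan}, and the lower bounds for $11n_{77}$, $12n_{60}$, and $12n_{219}$ come from their being 4-bridge knots together with \Cref{thm:bridge bound}. The only (harmless) divergence is for $9_{22}$, where you invoke $\bridge[9_{22}]=3$ and Kuiper's bound while the paper instead cites the Jeon--Jin classification (\Cref{thm:JeonJin}); both routes yield $\superbridge[9_{22}]\geq 4$.
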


\begin{figure}[t]
	\centering
		\begin{tabular*}{0.75\textwidth}{C{2.2in} R{.4in} R{.4in} R{.4in}}
		\multicolumn{4}{c}{$9_{22}$} \\
		\cline{1-4}\noalign{\smallskip}
		\multirow{12}{*}{\includegraphics[height=1.8in,width=1.8in,keepaspectratio]{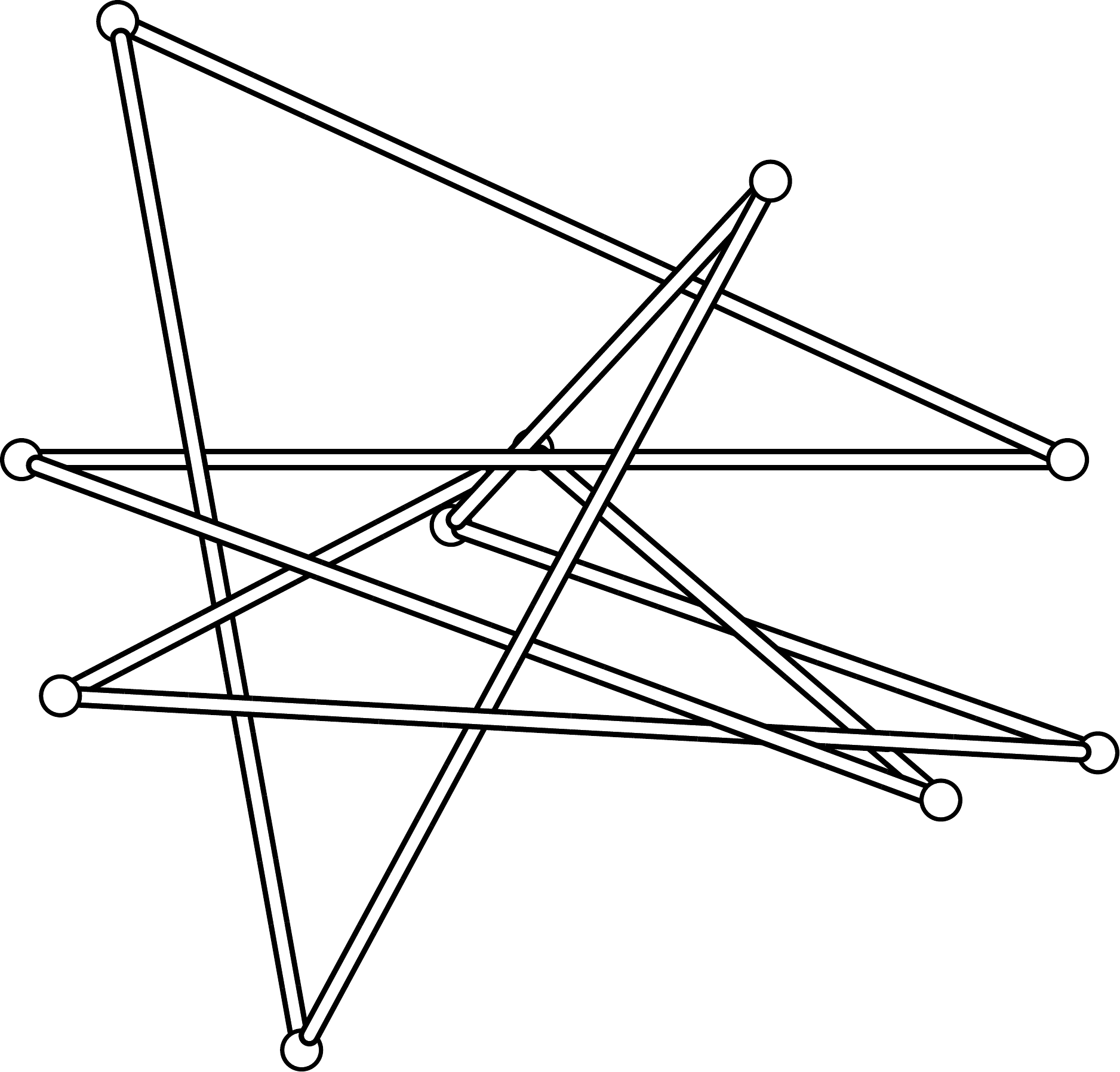}}
		& $0$ & $0$ & $0$ \\
		& $1000$ & $0$ & $0$ \\
		& $92$ & $419$ & $0$ \\
		& $268$ & $-564$ & $44$ \\
		& $716$ & $266$ & $374$ \\
		& $411$ & $-63$ & $-519$ \\
		& $1029$ & $-280$ & $236$ \\
		& $37$ & $-226$ & $352$ \\
		& $489$ & $10$ & $-509$ \\
		& $879$ & $-326$ & $349$\\
		\end{tabular*}
		\medskip
		\[(107029574,1,1,1,2,97177084,37335363,57495926,18717108,9955666)\]
	\caption{A 10-stick $9_{22}$ with superbridge number $4$. The three columns to the right of the image give the coordinates of the vertices, and the vector below is the $\vec{u}$ solving~\eqref{eq:even gordan}. The knot is shown in orthographic perspective, viewed from the direction of the positive $z$-axis.}
	\label{fig:9_22}
\end{figure}

\begin{proof}
	By \Cref{thm:JeonJin}, stated below, $\superbridge[9_{22}] \geq 4$. On the other hand, \Cref{fig:9_22} gives a 10-stick realization of $9_{22}$ which has superbridge number $\leq 4$. To verify this with \Cref{cor:Gordan}, it suffices to find a nonzero vector $\vec{u} \in \R^{10}$ with nonnegative entries so that
	\[
		\begin{bmatrix}1000 & 908 & 176 & -448 & -305 & -618 & -992 & -452 & 390 & 879 \\
 0 & -419 & -983 & -830 & -329 & 217 & 54 & -236 & -336 & -326 \\
 0 & 0 & 44 & -330 & -893 & -755 & 116 & 861 & 858 & 349 \end{bmatrix} \vec{u} = \vec{0}.
	\] 
	It is easy to check that
	\[
		\vec{u} = (107029574, 1, 1, 1, 2, 97177084, 37335363, 57495926, 18717108, 9955666)
	\]
	is such a vector, so this completes the proof that $\superbridge[9_{22}] = 4$.
	
	A similar argument shows that the knots $11n_{77}$, $12n_{60}$, and $12n_{219}$ have superbridge index $\leq 5$: 12-stick realizations of these knots together with vectors $\vec{u}$ solving~\eqref{eq:even gordan} are given in \Cref{sec:coords}. The certificate vectors $\vec{u}$ were found using \emph{Mathematica}'s {\tt FindInstance} function. 
	
	Since $11n_{77}$, $12n_{60}$, and $12n_{219}$ are all 4-bridge knots~\cite{Musick:2012uz,blair_wirtinger_2020,knotinfo}, \Cref{thm:bridge bound} implies that each of these knots has superbridge index $\geq 5$, completing the proof that their superbridge indices are exactly 5.
\end{proof}

Each of the polygonal realizations used in the above proof was originally generated with coordinates given as double-precision floating point numbers. However, these coordinates were all rounded to three significant digits and converted to integers (while verifying this did not change the knot type) to make it easier to verify the existence of exact solutions to~\eqref{eq:even gordan}.

The lower bound on $\superbridge[9_{22}]$ comes from Jeon and Jin's characterization of the possible 3-superbridge knots.

\begin{theorem}[Jeon--Jin~\cite{jeon_computation_2002}]\label{thm:JeonJin}
	Every knot except $3_1$ and $4_1$ and possibly $5_2$, $6_1$, $6_2$, $6_3$, $7_2$, $7_3$, $7_4$, $8_4$, and $8_9$ has superbridge index $\geq 4$.
\end{theorem}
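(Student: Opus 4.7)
The plan is to reduce the statement to a finite classification problem via the bridge index lower bound, and then rule out a superbridge index of $3$ for most 2-bridge knots of low crossing number by a combinatorial analysis of possible presentations.

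First, I would invoke \Cref{thm:bridge bound}: any knot $K$ with $\bridge[K] \geq 3$ automatically satisfies $\superbridge[K] \geq 4$. So it suffices to restrict attention to 2-bridge knots and to show that the only 2-bridge knots whose superbridge index can equal $3$ are $3_1$, $4_1$, and possibly the nine listed exceptions.

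Suppose $K$ is a 2-bridge knot with $\superbridge[K] = 3$, and let $\gamma$ be a realization attaining the bound with $h \circ \gamma$ having exactly 3 local maxima and 3 local minima for some height function $h$. Then $\gamma$ decomposes into 6 arcs that are monotonic with respect to $h$, alternating between ``up'' and ``down'' arcs around the cycle. Projecting along $h$ gives a planar diagram in which the six monotonic arcs cross one another in a constrained way: two arcs of the same monotonicity type are disjoint embedded arcs in the plane (after a generic perturbation), while arcs of opposite type can cross, but only in patterns compatible with the cyclic gluing data at the three top caps and three bottom caps. The key step is to extract from this combinatorial rigidity an upper bound either on the crossing number of $K$ or, equivalently, on the length of the continued fraction expansion of its Schubert fraction $p/q$.

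This complexity bound is the main obstacle. One must argue carefully that only finitely many 2-bridge types are realizable this way and, in fact, that no 2-bridge knot with more than $8$ crossings admits a 3-superbridge presentation. Given such a bound, the remainder is an enumeration: list all 2-bridge knots through 8 crossings and, for each one not among $3_1$, $4_1$ or the nine exceptions, exhibit an explicit combinatorial obstruction showing that no assignment of crossings in a 3-superbridge diagram can produce that knot type. For the nine exceptions, the argument is simply that the known obstructions fail to rule them out, leaving their superbridge index undetermined by this method. I expect the enumeration and obstruction step to be tedious but mechanical, whereas the a priori complexity bound on 3-superbridge knots will require the most delicate geometric analysis.
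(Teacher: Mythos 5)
This theorem is not proved in the paper at all: it is imported from Jeon and Jin's work~\cite{jeon_computation_2002}, and the only original content surrounding it is the remark that $8_7$ has been removed from Jeon--Jin's original list of possible exceptions because $\superbridge[8_7]=4$ was established in~\cite{shonkwiler_new_2020}. So the relevant question is whether your sketch could stand as an independent proof, and it cannot, for a reason more fundamental than the ``tedious but mechanical'' enumeration you defer. Your first step (reduce to $2$-bridge knots via \Cref{thm:bridge bound}) is correct and is indeed how Jeon--Jin begin. But your central construction analyzes the decomposition of $\gamma$ into six monotone arcs with respect to a \emph{single} height function $h$ for which $h\circ\gamma$ has three maxima. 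That is only a bridge-position statement: every knot of bridge index at most $3$ --- in particular every $2$-bridge knot, of arbitrarily large crossing number --- admits exactly such a six-arc (indeed four-arc) presentation. The condition $\superbridge(\gamma)=3$ is that \emph{every} direction yields at most three local maxima, and it is precisely the interaction between all directions simultaneously that Jeon and Jin exploit to obtain finiteness. No ``combinatorial rigidity'' of the kind you hope for exists at the level of one projection; the twist knots already show that the six-arc diagram can carry unboundedly many crossings. Consequently the step you yourself flag as ``the main obstacle'' --- an a priori complexity bound on $3$-superbridge knots --- is not merely unproved in your sketch but unobtainable by the route you propose. (A smaller error: two arcs that are both monotone increasing in $h$ can certainly cross in the projection along $h$, so even the local description of the diagram is not right.)

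Two further points. First, even granting Jeon--Jin's finiteness and enumeration, their method leaves $8_7$ on the list of possible exceptions; the statement as given here requires the separate computation $\superbridge[8_7]=4$ from~\cite{shonkwiler_new_2020}, which your proposal does not address. Second, your framing ``rule out a superbridge index of $3$ for most $2$-bridge knots of low crossing number'' understates the task: the theorem must exclude superbridge index $3$ for all but finitely many $2$-bridge knots of \emph{every} crossing number, which is exactly the part that requires the delicate multi-directional analysis.
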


This statement is slightly different than the one given in Jeon and Jin's paper, which included $8_7$ among the possible 3-superbridge knots; see~\cite{shonkwiler_new_2020} for the proof that $\superbridge[8_7]=4$.


\section{An extension to odd numbers of edges} 
\label{sec:extension}

Between \Cref{prop:9_22 11n77 12n60 12n219} and~\cite{shonkwiler_new_2020}, \Cref{cor:Gordan} has now been used to determine the superbridge indices of 21 8- and 9-crossing knots, but as stated it is limited to polygonal realizations of knots with an even number of edges. The goal now is to extend this approach to polygonal knots with an odd number of edges.

Suppose $\vec{e}_1, \dots , \vec{e_{2k+1}}$ are the edge vectors of some closed polygonal curve in $\R^3$. By \Cref{thm:JinBound}, $\superbridge(\vec{e}_1, \dots , \vec{e}_{2k+1}) \leq k$. If this is actually an equality, then there must be some vector $\vec{v} \in \R^3$ so that the projection of the polygon to the line containing $\vec{v}$ has exactly $k$ local maxima, meaning that the list $\vec{v} \cdot \vec{e}_1, \dots, \vec{v} \cdot \vec{e}_{2k+1}$ switches from positive to negative $k$ times (when considered cyclically). After possibly replacing $\vec{v}$ with $-\vec{v}$, the sign pattern can be assumed to be some cyclic permutation of 
\[
	+, +, -, +, -, \dots , +, -.
\]

Let $(s_1, \dots , s_{2k+1})=(0,0,1,0,1,\dots,0,1)$ and, for each $j \in \{1, \dots , 2k+1\}$, define the matrix
\[
	E_j := \left[\,(-1)^{s_{1+j}} \vec{e}_1 \,\,\vline\,\, (-1)^{s_{2+j}}\vec{e}_2 \,\,\vline\,\, \cdots \,\,\vline\, (-1)^{s_{2k+1+j}} \vec{e}_{2k+1}\,\right],
\]
where the subscripts are computed cyclically (i.e., $s_{2k+2} = s_1$, $s_{2k+3} = s_2$, etc.). 

Then $\superbridge(\vec{e}_1, \dots , \vec{e}_{2k+1}) = k$ if and only if there exist $\vec{v} \in \R^3$ and $j \in \{1, \dots , 2k+1\}$ so that $\vec{v}^T E_j$ has all positive entries. Contrapositively, none of the linear systems $\vec{v}^T E_j > 0$ have solutions if and only if $\superbridge(\vec{e}_1, \dots , \vec{e}_{2k+1}) < k$. Stated in this way, the following corollary of \Cref{thm:Gordan} gives new bounds on the superbridge indices of polygonal curves with an odd number of edges.

\begin{corollary}\label{cor:Gordan2}
	Suppose $\vec{e}_1, \dots , \vec{e}_{2k+1}$ are the edge vectors of a closed polygonal curve in $\R^3$. Then the curve has superbridge number $<k$ if and only if there exist nonzero vectors $\vec{u}_1 , \dots , \vec{u}_{2k+1} \in \R^{2k+1}$ with nonnegative entries solving the matrix equations
	\begin{equation}\label{eq:odd gordan}
		E_j \vec{u}_j = \vec{0}
	\end{equation}
	for all $j=1, \dots , 2k+1$. If $U = \left[\, \vec{u}_1 \,\vline\, \cdots \,\vline\, \vec{u}_{2k+1}\,\right]$, the $(2k+1)\times(2k+1)$ matrix $U$ serves as a computable certificate of the superbridge number bound. 
\end{corollary}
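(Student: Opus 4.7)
The plan is to reduce the statement to $2k+1$ separate applications of Gordan's theorem, one for each matrix $E_j$. The discussion preceding the statement already reframes the problem: because $\superbridge(\vec{e}_1,\dots,\vec{e}_{2k+1}) \leq k$ by Jin's bound, the strict inequality $\superbridge(\vec{e}_1,\dots,\vec{e}_{2k+1}) < k$ is equivalent to the assertion that no direction $\vec{v}\in\R^3$ produces a projection with exactly $k$ local maxima, and each such maximizing $\vec{v}$ corresponds (after possibly replacing $\vec{v}$ with $-\vec{v}$) to some cyclic shift of the sign pattern $(+,+,-,+,-,\dots,+,-)$ on the list $\vec{v}\cdot\vec{e}_1,\dots,\vec{v}\cdot\vec{e}_{2k+1}$. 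Since the $2k+1$ matrices $E_1,\dots,E_{2k+1}$ together encode exactly these cyclically shifted sign patterns, $\superbridge(\vec{e}_1,\dots,\vec{e}_{2k+1}) < k$ if and only if the linear inequality system $\vec{v}^T E_j > \vec{0}$ is infeasible for every $j\in\{1,\dots,2k+1\}$.

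The second step is to apply \Cref{thm:Gordan} to each $3\times(2k+1)$ matrix $E_j$ in turn: Gordan's alternative says that $\vec{v}^T E_j > \vec{0}$ is infeasible precisely when there exists a nonzero $\vec{u}_j\in\R^{2k+1}$ with nonnegative entries satisfying $E_j\vec{u}_j=\vec{0}$. Conjoining this biconditional across all $j$ delivers the stated characterization, and stacking the resulting $\vec{u}_1,\dots,\vec{u}_{2k+1}$ as the columns of $U$ yields the advertised certificate: given $U$, one verifies the superbridge bound by checking, one column at a time, that the column is nonzero and nonnegative and that $E_j\vec{u}_j=\vec{0}$, all of which can be done exactly in rational arithmetic once the edge vectors have rational coordinates.

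There is no substantive obstacle — the corollary is essentially bookkeeping on top of \Cref{cor:Gordan}. The only subtlety worth mentioning is the enumeration of relevant sign patterns: in a cyclic binary string of length $2k+1$ the number of $+$-to-$-$ descents equals the number of $-$-to-$+$ ascents, so $k$ descents forces $k$ ascents and hence exactly one adjacent pair of equal signs; after possibly negating $\vec{v}$ this equal pair can always be taken to be $(+,+)$, forcing the string to be a cyclic shift of $(+,+,-,+,-,\dots,+,-)$. Hence the $2k+1$ matrices $E_j$ really do exhaust all the cases that need to be ruled out, and the proof is complete.
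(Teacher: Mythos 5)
Your proposal is correct and follows essentially the same route as the paper, which states \Cref{cor:Gordan2} as an immediate consequence of the preceding discussion (reducing $\superbridge < k$ to the simultaneous infeasibility of the systems $\vec{v}^T E_j > \vec{0}$) together with one application of \Cref{thm:Gordan} per matrix $E_j$. Your closing paragraph usefully makes explicit the descent/ascent counting argument showing that the $2k+1$ cyclic shifts of $(+,+,-,+,-,\dots,+,-)$ exhaust the relevant sign patterns, a point the paper asserts without proof.
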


\Cref{cor:Gordan2} can now be used to determine the superbridge indices of the rest of the knots in \Cref{thm:main}.

	\begin{figure}[t]
		\centering
			\begin{tabular*}{0.75\textwidth}{C{2.2in} R{.4in} R{.4in} R{.4in}}
			\multicolumn{4}{c}{$9_{36}$} \\
			\cline{1-4}\noalign{\smallskip}
			\multirow{12}{*}{\includegraphics[height=1.8in,width=1.8in,keepaspectratio]{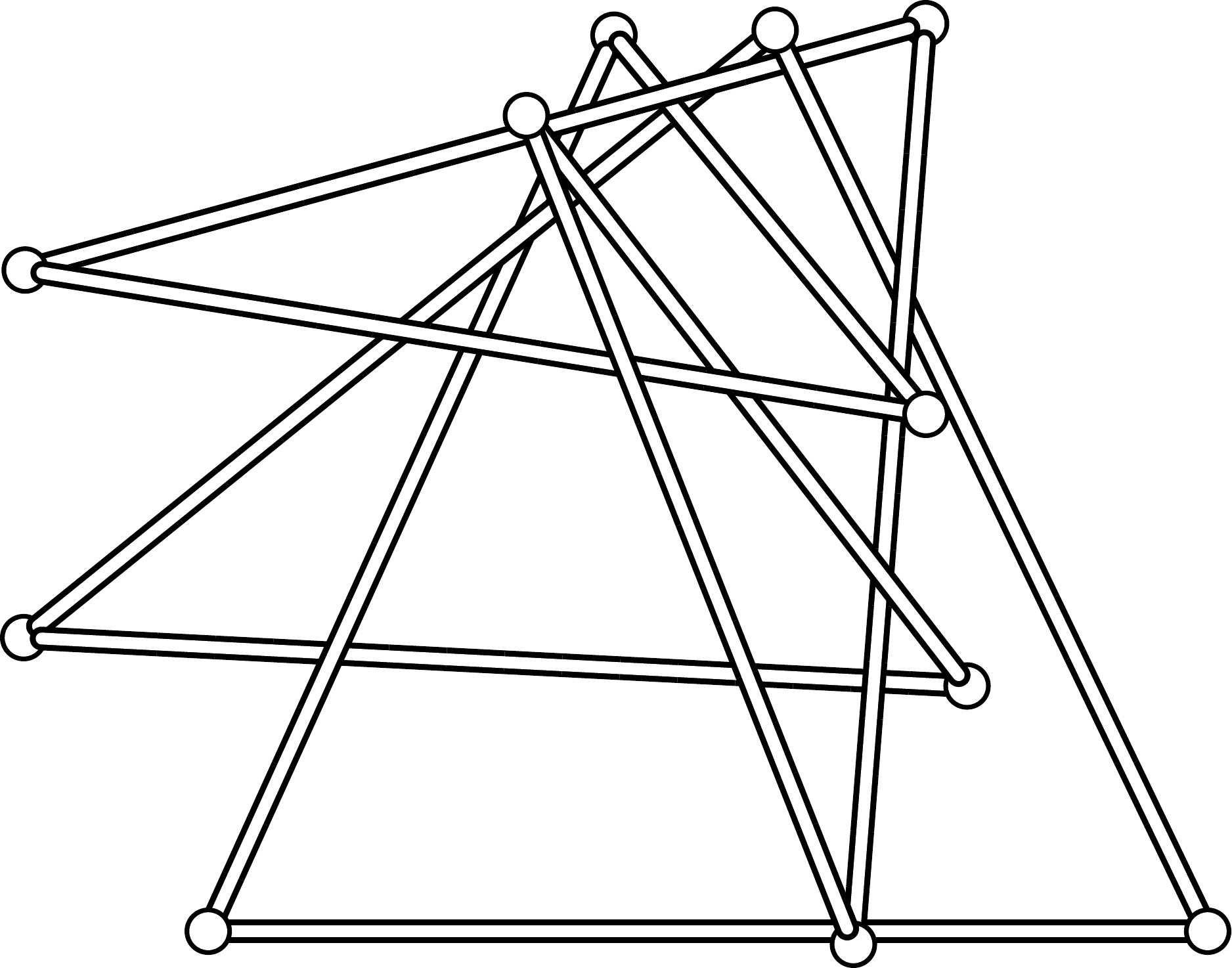}}
			& $0$ & $0$ & $0$ \\
			& $1000$ & $0$ & $0$ \\
			& $567$ & $902$ & $0$ \\
			& $-185$ & $294$ & $-256$ \\
			& $759$ & $245$ & $71$ \\
			& $318$ & $816$ & $764$ \\
			& $646$ & $-13$ & $311$ \\
			& $718$ & $908$ & $-72$ \\
			& $-183$ & $662$ & $285$ \\
			& $718$ & $517$ & $694$ \\
			& $406$ & $897$ & $-177$ \\
			\end{tabular*}
			 \[\begin{tiny}\begin{bmatrix}$1$ & $1$ & $1$ & $1$ & $1$ & $1$ & $1$ \
			& $1$ & $1$ & $1$ & $1$ \\
			$1$ & $1$ & $1$ & $1$ & $1$ & $1$ & $1$ & $12523027847$ & $1$ & $1$ & \
			$1$ \\
			$5426124359$ & $1$ & $1$ & $1$ & $1424202531$ & $1$ & $1$ & $1$ & $1$ \
			& $6584192388$ & $8992884808$ \\
			$1$ & $1$ & $1$ & $12223967$ & $1$ & $1$ & $263408915$ & \
			$29465794187$ & $9986903746$ & $1$ & $1$ \\
			$495044898$ & $5412091732$ & $100403683$ & $1$ & $3$ & $212199095$ & \
			$1$ & $1$ & $1$ & $1$ & $1$ \\
			$1$ & $1$ & $1$ & $3$ & $2159516409$ & $1$ & $65852229$ & $1$ & $1$ & \
			$6627781141$ & $1$ \\
			$1$ & $1$ & $38906659$ & $19412935$ & $1$ & $53049774$ & $39336205$ & \
			$1$ & $1$ & $1$ & $1$ \\
			$1125551611$ & $1735612295$ & $29180733$ & $858339$ & $1557767516$ & \
			$11511945$ & $88068805$ & $355250231$ & $6884251289$ & $36701288$ & \
			$398991934$ \\
			$1241803067$ & $3170128948$ & $69866228$ & $11625485$ & $267607237$ & \
			$79522441$ & $20150485$ & $2006088206$ & $2575693266$ & $1056878715$ \
			& $2505218830$ \\
			$45910998$ & $355246092$ & $26475285$ & $6280015$ & $89877512$ & \
			$374500950$ & $1677801$ & $145553542$ & $1448979271$ & $824513076$ & \
			$3204959462$ \\
			$3901711428$ & $1642348279$ & $12533499$ & $2078963$ & $40849202$ & \
			$426090556$ & $184771845$ & $27474391863$ & $3948141106$ & \
			$5211154861$ & $7748788486$\end{bmatrix}\end{tiny}\]
		\caption{Visualization of an 11-stick realization of $9_{36}$ with superbridge number $4$. The three columns to the right of the image give the coordinates of the vertices, and the matrix below is the $U$ whose columns are the~$\vec{u}_j$ satisfying~\eqref{eq:odd gordan}. The knot is shown in orthographic perspective, viewed from the direction of the positive $z$-axis.}
		\label{fig:9_36}
	\end{figure}

\begin{proposition}\label{prop:rest}
	The knots $9_3$, $9_4$, $9_6$, $9_9$, $9_{11}$, $9_{13}$, $9_{17}$, $9_{18}$, $9_{23}$, $9_{25}$, $9_{27}$, $9_{30}$, $9_{31}$, and $9_{36}$ have superbridge index equal to 4, and $\superbridge[12n_{66}] = \superbridge[12n_{225}] = 5$.
\end{proposition}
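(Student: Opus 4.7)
The plan is to invoke \Cref{cor:Gordan2} for each listed knot, applied to a polygonal realization with an odd number of edges. The worked template is $9_{36}$: \Cref{fig:9_36} displays an 11-stick realization together with the $11 \times 11$ certificate matrix $U$ whose columns $\vec{u}_1, \dots, \vec{u}_{11}$ are the required nonnegative solutions of $E_j \vec{u}_j = \vec{0}$, certifying that this realization has superbridge number $\leq 4$. The same template is meant to handle every other knot in the statement: for each remaining 9-crossing knot and for $12n_{66}$ and $12n_{225}$, I would exhibit a concrete odd-edge realization with integer vertex coordinates together with an accompanying $(2k+1)\times(2k+1)$ certificate matrix, recording all of this data in \Cref{sec:coords}.

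With the realizations in hand, the upper-bound half of each case reduces to three routine checks: (a) the given vertices trace out a polygonal closed curve of the claimed knot type, which I would certify by computing a standard polynomial invariant; (b) every entry of $U$ is nonnegative; and (c) for each column index $j$, the matrix $E_j$ built from the appropriate cyclic shift of the sign pattern $(0,0,1,0,1,\dots,0,1)$ satisfies $E_j \vec{u}_j = \vec{0}$. Because every coordinate and every certificate entry is an integer, step (c) amounts to a finite exact-arithmetic check that needs no further justification once the data is printed.

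For the lower bounds, all fourteen listed 9-crossing knots lie outside the short exceptional list in \Cref{thm:JeonJin}, so each automatically satisfies $\superbridge \geq 4$. Both $12n_{66}$ and $12n_{225}$ are 4-bridge knots according to~\cite{Musick:2012uz,blair_wirtinger_2020,knotinfo}, so \Cref{thm:bridge bound} gives $\superbridge \geq 5$ for each. Matching upper and lower bounds then yields the claimed equalities knot-by-knot.

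The hard part, and where I expect essentially all of the effort to go, is producing the certificate matrices. The even-edge setting of \Cref{cor:Gordan} required only a single nonnegative vector, whereas \Cref{cor:Gordan2} demands $2k+1$ simultaneous nonnegative witnesses, one per cyclic shift, each lying in $\R^{2k+1}$. I expect that very few randomly generated realizations near the minimum possible edge count will pass all $2k+1$ feasibility tests at once, so my approach would be to loop over candidates from a very large random ensemble (as in~\cite{TomClay,stick-knot-gen}), call \emph{Mathematica}'s {\tt FindInstance} on each of the $2k+1$ systems, and retain only those realizations for which every one of the $2k+1$ systems admits a rational solution. Rounding the surviving floating-point vertex coordinates to small integers while rechecking the knot type then yields certificates that are both exhibitable and straightforward to verify by hand.
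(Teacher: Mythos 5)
Your proposal is correct and follows essentially the same route as the paper: lower bounds from \Cref{thm:JeonJin} for the 9-crossing knots and from \Cref{thm:bridge bound} (via the 4-bridge property) for $12n_{66}$ and $12n_{225}$, matched with upper bounds from \Cref{cor:Gordan2} applied to 11-stick (resp.\ 13-stick) integer-coordinate realizations whose certificate matrices $U$ are tabulated in \Cref{sec:coords}, with $9_{36}$ as the displayed template. The additional practical details you describe (verifying knot type after rounding, exact-arithmetic verification of $E_j\vec{u}_j=\vec{0}$, and the search strategy using {\tt FindInstance}) match what the paper does outside the formal proof.
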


\begin{proof}
	Each of these knots has superbridge index $\geq 4$ by \Cref{thm:JeonJin}, so for the 9-crossing knots it suffices to find 11-stick realizations and certificate matrices $U$ showing they have superbridge number $\leq 4$, as in \Cref{cor:Gordan2}. $12n_{66}$ and $12n_{225}$ are both 4-bridge knots~\cite{blair_wirtinger_2020,knotinfo}, so their superbridge indices are $\geq 5$ and it suffices to find 13-stick realizations with corresponding $U$.
	
	These realizations and certificate matrices are given in \Cref{sec:coords}. The entry for $9_{36}$ is reproduced in \Cref{fig:9_36}. In each case, a visualization of the knot is shown next to the coordinates of the vertices, and the matrix $U$ is given below.
\end{proof}


\section{Conclusion} 
\label{sec:conclusion}

One virtue of \Cref{cor:Gordan2} is that it provides examples of superbridge-minimizing polygonal knots which do not minimize stick number. Specifically, all of the 9-crossing knots mentioned in \Cref{prop:rest} except $9_6$, $9_{23}$, and $9_{36}$ are known to have stick number $\leq 10$~\cite{TomClay,rawdon_upper_2002,shonkwiler_all_2021}, but to my knowledge there are no 10-stick realizations of these knots with superbridge number 4. For example, in the course of this project I generated 1147 random 10-stick realizations of $9_{27}$ (out of more than 170 billion random 10-gons), none of which seem to have superbridge number equal to 4. This suggests the very plausible but still intriguing possibility that there may exist knots for which the stick number and superbridge index cannot be achieved by the same realization.

\begin{conjecture}
	There exists a knot type $K$ for which no polygonal realization achieves both $\stick[K]$ and $\superbridge[K]$.
\end{conjecture}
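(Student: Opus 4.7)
The plan is to focus on a single candidate knot type where the computational evidence in \Cref{sec:conclusion} is already strong, most naturally $9_{27}$: it is known that $\stick[9_{27}] \leq 10$ and $\superbridge[9_{27}] = 4$, and more than $170$ billion random $10$-gons failed to produce a $10$-stick realization with superbridge number~$4$. The strategy has three steps: pin down $\stick[9_{27}]$ exactly, confirm $\superbridge[9_{27}] = 4$, and rule out $\superbridge = 4$ for every $10$-stick realization.

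The first step amounts to combining the explicit $10$-stick realization with a matching lower bound, via the crossing-number-based stick inequalities in the literature or a direct enumeration argument. The second is already handled by \Cref{prop:rest}. So the heart of the conjecture for this knot reduces to showing that no $10$-edge polygonal realization of $9_{27}$ has superbridge number~$4$.

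To attack this, I would translate via \Cref{cor:Gordan}: a $10$-edge polygon has superbridge number~$4$ precisely when the signed-edge matrix $E$ from~\eqref{eq:E} admits a nonzero nonnegative kernel vector, equivalently (by \Cref{thm:Gordan}) when no direction $\vec{v} \in \R^3$ produces a strictly alternating sign pattern on $\vec{v} \cdot \vec{e}_1, \dots, \vec{v} \cdot \vec{e}_{10}$. So the goal becomes: for every $10$-stick polygon $\gamma$ realizing $9_{27}$, exhibit a direction $\vec{v}$ whose dot products with the edges alternate in sign. I would then decompose the moduli space of $10$-stick realizations of $9_{27}$ into finitely many combinatorial strata---say by fixing a minimal knot diagram together with a crossing pattern and cyclic vertex ordering under some generic projection---and on each stratum either construct such a $\vec{v}$ explicitly or set up a semi-algebraic system encoding ``$\gamma$ realizes $9_{27}$, has $10$ sticks, and $E$ has a nonnegative kernel vector'' and certify its infeasibility via a Positivstellensatz or quantifier-elimination argument.

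The main obstacle is that last step: proving infeasibility uniformly across a positive-dimensional semi-algebraic moduli space. Direct quantifier elimination is almost certainly intractable at this size, so the realistic hope is to find a topological or combinatorial obstruction---perhaps through bridge presentations, mosaic number, or arc presentations with few arcs---that forces any $10$-stick polygon realizing $9_{27}$ to carry enough geometric complexity that an alternating direction must exist. Failing a clean theoretical argument, a hybrid strategy combining rigorous analysis of a small number of combinatorial strata with computer-assisted interval-arithmetic verification on the remaining strata could still settle the conjecture for this one knot, which is all that is required.
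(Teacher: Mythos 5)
The statement you are addressing is a \emph{conjecture}: the paper deliberately leaves it open and offers only statistical evidence (1147 random 10-stick realizations of $9_{27}$, out of more than 170 billion random 10-gons generated, none of which appear to have superbridge number 4). Your proposal is a research program rather than a proof, and by your own admission the decisive step is missing. Concretely, there are two genuine gaps. First, you never establish $\stick[9_{27}]$ exactly: the literature gives only $\stick[9_{27}] \leq 10$, and the general crossing-number lower bounds on stick number are far too weak to force equality at 10 for a 9-crossing knot, so ``combining the explicit realization with a matching lower bound'' is not something currently available. (This matters because if the stick number were 9 or lower, your target statement would have to cover those realizations as well.)

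Second, and more seriously, the heart of the argument --- showing that \emph{every} 10-edge polygonal realization of $9_{27}$ admits a direction $\vec{v}$ for which $\vec{v}\cdot\vec{e}_1,\dots,\vec{v}\cdot\vec{e}_{10}$ alternates in sign --- is a universally quantified statement over a positive-dimensional semi-algebraic moduli space, and you offer no mechanism for proving it beyond naming candidate tools (stratification by diagram combinatorics, Positivstellensatz certificates, interval arithmetic) whose tractability you yourself doubt in the same paragraph. There is no known topological or combinatorial obstruction linking stick-minimality to superbridge number in the way you would need, and the paper's evidence is purely empirical sampling, which cannot be upgraded to a universal claim. You have correctly identified the relevant reduction (via \Cref{cor:Gordan} and \Cref{thm:Gordan}) and a plausible candidate knot, but the conjecture remains open: nothing in your proposal closes the essential step.
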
 

The algorithm used to generate large ensembles of random polygons for this project produces \emph{equilateral} polygons---that is, closed polygonal curves for which all edges are the same length. While I don't know of any evidence either way, it is conceivable that minimizing superbridge number is easier with heterogeneous edgelengths, so it might be worthwhile to perform similar investigations with non-equilateral random polygons.


\subsection*{Acknowledgments} 
\label{sub:acknowledgments}

Thanks, as ever, to Allison Moore and Chuck Livingston for maintaining KnotInfo~\cite{knotinfo} and to Jason Cantarella and Tom Eddy for technical support. This work was partially supported by grants from the Simons Foundation (\#354225 and \#709150).


\appendix

\section{Stick Number and Superbridge Index Bounds}\label{sec:table}

Bounds on the superbridge index for prime knots through 10 crossings are given below, including references for where these results were proved. If an exact value is not known, the possible values, as determined by known upper and lower bounds, are given in the form of an interval; e.g., the entry $[3,4]$ for $5_2$ means that $3 \leq \superbridge[5_2] \leq 4$.

The lower bounds on superbridge index number always comes from either Kuiper's result $\bridge[K]<\superbridge[K]$ for nontrivial knots~\cite{kuiper_new_1987} (\Cref{thm:bridge bound}) or Jeon and Jin's characterization of possible 3-superbridge knots~\cite{jeon_computation_2002} (\Cref{thm:JeonJin}), so these references are not explicitly given. 

When the upper bound comes from Jin's bound $\superbridge[K] \leq \frac{1}{2} \stick[K]$ (\Cref{thm:JinBound}), Jin's paper~\cite{jin_polygon_1997} is cited along with the current best upper bound on stick number for the knot.

\clearpage

\Crefname{theorem}{Thm.}{Thms.}

\setlength{\tabcolsep}{6pt}

\begin{center}
\begin{multicols*}{3}
\TrickSupertabularIntoMulticols

\tablefirsthead{
	\multicolumn{1}{l}{$K$} & \multicolumn{2}{l}{$\superbridge[K]$} \\
	\midrule
}
\tablehead{
	\multicolumn{1}{l}{$K$} & \multicolumn{2}{l}{$\superbridge[K]$} \\
	\midrule
}
\tablelasttail{\bottomrule}

\begin{supertabular*}{.26\textwidth}{lll} \label{tab:stick numbers}
$ \!\!0_{1} $ & 1 & \\
$ 3_{1} $ & 3 & \cite{kuiper_new_1987} \\
$ 4_{1} $ & 3 & \cite{jin_polygon_1997, randell_elementary_1994} \\
$ 5_{1} $ & 4 & \cite{kuiper_new_1987} \\
$ 5_{2} $ & $[3,4]$ & \cite{jin_polygon_1997} \\
$ 6_{1} $ & $[3,4]$ & \cite{negami_ramsey_1991, meissen_edge_1998, jin_polygon_1997} \\
$ 6_{2} $ & $[3,4]$ & \cite{negami_ramsey_1991, meissen_edge_1998, jin_polygon_1997} \\
$ 6_{3} $ & $[3,4]$ & \cite{negami_ramsey_1991, meissen_edge_1998, jin_polygon_1997} \\
\midrule
$ 7_{1} $ & 4 & \cite{kuiper_new_1987}\\
$ 7_{2} $ & $[3,4]$ & \cite{meissen_edge_1998, jin_polygon_1997}\\
$ 7_{3} $ & $[3,4]$ & \cite{meissen_edge_1998, jin_polygon_1997}\\
$ 7_{4} $ & $[3,4]$ & \cite{meissen_edge_1998, jin_polygon_1997}\\
$ 7_{5} $ & 4 & \cite{meissen_edge_1998, jin_polygon_1997}\\
$ 7_{6} $ & 4 & \cite{meissen_edge_1998, jin_polygon_1997}\\
$ 7_{7} $ & 4 & \cite{meissen_edge_1998, jin_polygon_1997}\\
\midrule
$ 8_{1} $ & 4 & \cite{shonkwiler_new_2020} \\
$ 8_{2} $ & 4 & \cite{shonkwiler_new_2020} \\
$ 8_{3} $ & 4 & \cite{shonkwiler_new_2020} \\
$ 8_{4} $ & $[3,4]$ & \cite{shonkwiler_new_2020} \\
$ 8_{5} $ & 4 & \cite{shonkwiler_new_2020} \\
$ 8_{6} $ & 4 & \cite{shonkwiler_new_2020} \\
$ 8_{7} $ & 4 & \cite{shonkwiler_new_2020} \\
$ 8_{8} $ & 4 & \cite{shonkwiler_new_2020} \\
$ 8_{9} $ & $[3,4]$ & \cite{shonkwiler_new_2020} \\
$ 8_{10} $ & 4 & \cite{shonkwiler_new_2020} \\
$ 8_{11} $ & 4 & \cite{shonkwiler_new_2020} \\
$ 8_{12} $ & 4 & \cite{shonkwiler_new_2020} \\
$ 8_{13} $ & 4 & \cite{shonkwiler_new_2020} \\
$ 8_{14} $ & 4 & \cite{shonkwiler_new_2020} \\
$ 8_{15} $ & 4 & \cite{shonkwiler_new_2020} \\
$ 8_{16} $ & 4 & \cite{rawdon_upper_2002, jin_polygon_1997} \\
$ 8_{17} $ & 4 & \cite{rawdon_upper_2002, jin_polygon_1997} \\
$ 8_{18} $ & 4 &  \cite{calvo_geometric_2001, rawdon_upper_2002, jin_polygon_1997} \\
$ 8_{19} $ & 4 & \cite{kuiper_new_1987} \\
$ 8_{20} $ & 4 & \cite{negami_ramsey_1991, meissen_edge_1998, jin_polygon_1997} \\
$ 8_{21} $ & 4 & \cite{meissen_edge_1998, jin_polygon_1997}\\
\midrule
$ 9_{1} $ & 4 & \cite{kuiper_new_1987} \\
$ 9_{2} $ & $[4,5]$ & \cite{TomClay, jin_polygon_1997} \\
$ 9_{3} $ & 4 & \Cref{thm:main} \\
$ 9_{4} $ & 4 & \Cref{thm:main} \\
$ 9_{5} $ & $[4,5]$ & \cite{rawdon_upper_2002, jin_polygon_1997} \\
$ 9_{6} $ & 4 & \Cref{thm:main} \\
$ 9_{7} $ & 4 & \cite{shonkwiler_new_2020} \\
$ 9_{8} $ & $[4,5]$ & \cite{rawdon_upper_2002, jin_polygon_1997} \\
$ 9_{9} $ & 4 & \Cref{thm:main} \\
$ 9_{10} $ & $[4,5]$ & \cite{rawdon_upper_2002, jin_polygon_1997} \\
$ 9_{11} $ & 4 & \Cref{thm:main} \\
$ 9_{12} $ & $[4,5]$ & \cite{rawdon_upper_2002, jin_polygon_1997} \\
$ 9_{13} $ & 4 & \Cref{thm:main} \\
$ 9_{14} $ & $[4,5]$ & \cite{rawdon_upper_2002, jin_polygon_1997} \\
$ 9_{15} $ & $[4,5]$ & \cite{TomClay, jin_polygon_1997} \\
$ 9_{16} $ & 4 & \cite{shonkwiler_new_2020} \\
$ 9_{17} $ & 4 & \Cref{thm:main} \\
$ 9_{18} $ & 4 & \Cref{thm:main} \\
$ 9_{19} $ & $[4,5]$ & \cite{rawdon_upper_2002, jin_polygon_1997} \\
$ 9_{20} $ & 4 & \cite{shonkwiler_new_2020} \\
$ 9_{21} $ & $[4,5]$ & \cite{TomClay, jin_polygon_1997} \\
$ 9_{22} $ & 4 & \Cref{thm:main} \\
$ 9_{23} $ & 4 & \Cref{thm:main} \\
$ 9_{24} $ & $[4,5]$ & \cite{rawdon_upper_2002, jin_polygon_1997} \\
$ 9_{25} $ & 4 & \Cref{thm:main} \\
$ 9_{26} $ & 4 & \cite{shonkwiler_new_2020} \\
$ 9_{27} $ & 4 & \Cref{thm:main} \\
$ 9_{28} $ & 4 & \cite{shonkwiler_new_2020} \\
$ 9_{29} $ & 4 & \cite{scharein_interactive_1998, jin_polygon_1997} \\
$ 9_{30} $ & 4 & \Cref{thm:main} \\
$ 9_{31} $ & 4 & \Cref{thm:main} \\
$ 9_{32} $ & 4 & \cite{shonkwiler_new_2020} \\
$ 9_{33} $ & 4 & \cite{shonkwiler_new_2020} \\
$ 9_{34} $ & 4 & \cite{rawdon_upper_2002, jin_polygon_1997} \\
$ 9_{35} $ & 4 & \cite{TomClay, jin_polygon_1997} \\
$ 9_{36} $ & 4 & \Cref{thm:main} \\
$ 9_{37} $ & $[4,5]$ & \cite{rawdon_upper_2002, jin_polygon_1997} \\
$ 9_{38} $ & $[4,5]$ & \cite{rawdon_upper_2002, jin_polygon_1997} \\
$ 9_{39} $ & 4 & \cite{TomClay, jin_polygon_1997} \\
$ 9_{40} $ & 4 & \cite{jin_polygon_1997} \\
$ 9_{41} $ & 4 & \cite{jin_polygon_1997} \\
$ 9_{42} $ & 4 & \cite{jin_polygon_1997} \\
$ 9_{43} $ & 4 & \cite{TomClay, jin_polygon_1997} \\
$ 9_{44} $ & 4 & \cite{rawdon_upper_2002, jin_polygon_1997} \\
$ 9_{45} $ & 4 & \cite{TomClay, jin_polygon_1997} \\
$ 9_{46} $ & 4 & \cite{jin_polygon_1997} \\
$ 9_{47} $ & 4 & \cite{rawdon_upper_2002, jin_polygon_1997} \\
$ 9_{48} $ & 4 & \cite{TomClay, jin_polygon_1997} \\
$ 9_{49} $ & 4 & \cite{rawdon_upper_2002, jin_polygon_1997} \\
\midrule
$ 10_{1} $ & $[4,5]$ & \cite{rawdon_upper_2002, jin_polygon_1997} \\
$ 10_{2} $ & $[4,5]$ & \cite{rawdon_upper_2002, jin_polygon_1997} \\
$ 10_{3} $ & $[4,5]$ & \cite{TomClay, jin_polygon_1997} \\
$ 10_{4} $ & $[4,5]$ & \cite{rawdon_upper_2002, jin_polygon_1997} \\
$ 10_{5} $ & $[4,5]$ & \cite{rawdon_upper_2002, jin_polygon_1997} \\
$ 10_{6} $ & $[4,5]$ & \cite{TomClay, jin_polygon_1997} \\
$ 10_{7} $ & $[4,5]$ & \cite{TomClay, jin_polygon_1997} \\
$ 10_{8} $ & $[4,5]$ & \cite{TomClay, jin_polygon_1997} \\
$ 10_{9} $ & $[4,5]$ & \cite{rawdon_upper_2002, jin_polygon_1997} \\
$ 10_{10} $ & $[4,5]$ & \cite{TomClay, jin_polygon_1997} \\
$ 10_{11} $ & $[4,5]$ & \cite{rawdon_upper_2002, jin_polygon_1997} \\
$ 10_{12} $ & $[4,5]$ & \cite{rawdon_upper_2002, jin_polygon_1997} \\
$ 10_{13} $ & $[4,5]$ & \cite{rawdon_upper_2002, jin_polygon_1997} \\
$ 10_{14} $ & $[4,5]$ & \cite{rawdon_upper_2002, jin_polygon_1997} \\
$ 10_{15} $ & $[4,5]$ & \cite{TomClay, jin_polygon_1997} \\
$ 10_{16} $ & $[4,5]$ & \cite{TomClay, jin_polygon_1997} \\
$ 10_{17} $ & $[4,5]$ & \cite{TomClay, jin_polygon_1997} \\
$ 10_{18} $ & $[4,5]$ & \cite{shonkwiler_all_2021, jin_polygon_1997} \\
$ 10_{19} $ & $[4,5]$ & \cite{rawdon_upper_2002, jin_polygon_1997} \\
$ 10_{20} $ & $[4,5]$ & \cite{TomClay, jin_polygon_1997} \\
$ 10_{21} $ & $[4,5]$ & \cite{TomClay, jin_polygon_1997} \\
$ 10_{22} $ & $[4,5]$ & \cite{TomClay, jin_polygon_1997} \\
$ 10_{23} $ & $[4,5]$ & \cite{TomClay, jin_polygon_1997} \\
$ 10_{24} $ & $[4,5]$ & \cite{TomClay, jin_polygon_1997} \\
$ 10_{25} $ & $[4,5]$ & \cite{rawdon_upper_2002, jin_polygon_1997} \\
$ 10_{26} $ & $[4,5]$ & \cite{TomClay, jin_polygon_1997} \\
$ 10_{27} $ & $[4,5]$ & \cite{rawdon_upper_2002, jin_polygon_1997} \\
$ 10_{28} $ & $[4,5]$ & \cite{TomClay, jin_polygon_1997} \\
$ 10_{29} $ & $[4,5]$ & \cite{rawdon_upper_2002, jin_polygon_1997} \\
$ 10_{30} $ & $[4,5]$ & \cite{TomClay, jin_polygon_1997} \\
$ 10_{31} $ & $[4,5]$ & \cite{TomClay, jin_polygon_1997} \\
$ 10_{32} $ & $[4,5]$ & \cite{rawdon_upper_2002, jin_polygon_1997} \\
$ 10_{33} $ & $[4,5]$ & \cite{rawdon_upper_2002, jin_polygon_1997} \\
$ 10_{34} $ & $[4,5]$ & \cite{TomClay, jin_polygon_1997} \\
$ 10_{35} $ & $[4,5]$ & \cite{TomClay, jin_polygon_1997} \\
$ 10_{36} $ & $[4,5]$ & \cite{rawdon_upper_2002, jin_polygon_1997} \\
$ 10_{37} $ & $[4,5]$ & \cite{Adams:2020vm} \\
$ 10_{38} $ & $[4,5]$ & \cite{TomClay, jin_polygon_1997} \\
$ 10_{39} $ & $[4,5]$ & \cite{TomClay, jin_polygon_1997} \\
$ 10_{40} $ & $[4,5]$ & \cite{rawdon_upper_2002, jin_polygon_1997} \\
$ 10_{41} $ & $[4,5]$ & \cite{rawdon_upper_2002, jin_polygon_1997} \\
$ 10_{42} $ & $[4,5]$ & \cite{rawdon_upper_2002, jin_polygon_1997} \\
$ 10_{43} $ & $[4,5]$ & \cite{TomClay, jin_polygon_1997} \\
$ 10_{44} $ & $[4,5]$ & \cite{TomClay, jin_polygon_1997} \\
$ 10_{45} $ & $[4,5]$ & \cite{rawdon_upper_2002, jin_polygon_1997} \\
$ 10_{46} $ & $[4,5]$ & \cite{TomClay, jin_polygon_1997} \\
$ 10_{47} $ & $[4,5]$ & \cite{TomClay, jin_polygon_1997} \\
$ 10_{48} $ & $[4,5]$ & \cite{rawdon_upper_2002, jin_polygon_1997} \\
$ 10_{49} $ & $[4,5]$ & \cite{rawdon_upper_2002, jin_polygon_1997} \\
$ 10_{50} $ & $[4,5]$ & \cite{TomClay, jin_polygon_1997} \\
$ 10_{51} $ & $[4,5]$ & \cite{TomClay, jin_polygon_1997} \\
$ 10_{52} $ & $[4,5]$ & \cite{rawdon_upper_2002, jin_polygon_1997} \\
$ 10_{53} $ & $[4,5]$ & \cite{TomClay, jin_polygon_1997} \\
$ 10_{54} $ & $[4,5]$ & \cite{TomClay, jin_polygon_1997} \\
$ 10_{55} $ & $[4,5]$ & \cite{TomClay, jin_polygon_1997} \\
$ 10_{56} $ & $[4,5]$ & \cite{TomClay, jin_polygon_1997} \\
$ 10_{57} $ & $[4,5]$ & \cite{TomClay, jin_polygon_1997} \\
$ 10_{58} $ & $[4,5]$ & \cite{shonkwiler_all_2021, jin_polygon_1997} \\
$ 10_{59} $ & $[4,5]$ & \cite{rawdon_upper_2002, jin_polygon_1997} \\
$ 10_{60} $ & $[4,5]$ & \cite{rawdon_upper_2002, jin_polygon_1997} \\
$ 10_{61} $ & $[4,5]$ & \cite{rawdon_upper_2002, jin_polygon_1997} \\
$ 10_{62} $ & $[4,5]$ & \cite{TomClay, jin_polygon_1997} \\
$ 10_{63} $ & $[4,5]$ & \cite{rawdon_upper_2002, jin_polygon_1997} \\
$ 10_{64} $ & $[4,5]$ & \cite{TomClay, jin_polygon_1997} \\
$ 10_{65} $ & $[4,5]$ & \cite{TomClay, jin_polygon_1997} \\
$ 10_{66} $ & $[4,5]$ & \cite{shonkwiler_all_2021, jin_polygon_1997} \\
$ 10_{67} $ & $[4,5]$ & \cite{rawdon_upper_2002, jin_polygon_1997} \\
$ 10_{68} $ & $[4,5]$ & \cite{shonkwiler_all_2021, jin_polygon_1997} \\
$ 10_{69} $ & $[4,5]$ & \cite{rawdon_upper_2002, jin_polygon_1997} \\
$ 10_{70} $ & $[4,5]$ & \cite{TomClay, jin_polygon_1997} \\
$ 10_{71} $ & $[4,5]$ & \cite{TomClay, jin_polygon_1997} \\
$ 10_{72} $ & $[4,5]$ & \cite{TomClay, jin_polygon_1997} \\
$ 10_{73} $ & $[4,5]$ & \cite{TomClay, jin_polygon_1997} \\
$ 10_{74} $ & $[4,5]$ & \cite{TomClay, jin_polygon_1997} \\
$ 10_{75} $ & $[4,5]$ & \cite{TomClay, jin_polygon_1997} \\
$ 10_{76} $ & $[4,5]$ & \cite{shonkwiler_new_2020} \\
$ 10_{77} $ & $[4,5]$ & \cite{TomClay, jin_polygon_1997} \\
$ 10_{78} $ & $[4,5]$ & \cite{TomClay, jin_polygon_1997} \\
$ 10_{79} $ & $[4,5]$ & \cite{jin_polygon_1997, scharein_interactive_1998} \\
$ 10_{80} $ & $[4,5]$ & \cite{shonkwiler_all_2021, jin_polygon_1997} \\
$ 10_{81} $ & $[4,5]$ & \cite{rawdon_upper_2002, jin_polygon_1997} \\
$ 10_{82} $ & $[4,5]$ & \cite{shonkwiler_all_2021, jin_polygon_1997} \\
$ 10_{83} $ & $[4,5]$ & \cite{TomClay, jin_polygon_1997} \\
$ 10_{84} $ & $[4,5]$ & \cite{shonkwiler_all_2021, jin_polygon_1997} \\
$ 10_{85} $ & $[4,5]$ & \cite{TomClay, jin_polygon_1997} \\
$ 10_{86} $ & $[4,5]$ & \cite{rawdon_upper_2002, jin_polygon_1997} \\
$ 10_{87} $ & $[4,5]$ & \cite{rawdon_upper_2002, jin_polygon_1997} \\
$ 10_{88} $ & $[4,5]$ & \cite{rawdon_upper_2002, jin_polygon_1997} \\
$ 10_{89} $ & $[4,5]$ & \cite{rawdon_upper_2002, jin_polygon_1997} \\
$ 10_{90} $ & $[4,5]$ & \cite{TomClay, jin_polygon_1997} \\
$ 10_{91} $ & $[4,5]$ & \cite{TomClay, jin_polygon_1997} \\
$ 10_{92} $ & $[4,5]$ & \cite{rawdon_upper_2002, jin_polygon_1997} \\
$ 10_{93} $ & $[4,5]$ & \cite{shonkwiler_all_2021, jin_polygon_1997} \\
$ 10_{94} $ & $[4,5]$ & \cite{TomClay, jin_polygon_1997} \\
$ 10_{95} $ & $[4,5]$ & \cite{TomClay, jin_polygon_1997} \\
$ 10_{96} $ & $[4,5]$ & \cite{rawdon_upper_2002, jin_polygon_1997} \\
$ 10_{97} $ & $[4,5]$ & \cite{TomClay, jin_polygon_1997} \\
$ 10_{98} $ & $[4,5]$ & \cite{rawdon_upper_2002, jin_polygon_1997} \\
$ 10_{99} $ & $[4,5]$ & \cite{rawdon_upper_2002, jin_polygon_1997} \\
$ 10_{100} $ & $[4,5]$ & \cite{shonkwiler_all_2021, jin_polygon_1997} \\
$ 10_{101} $ & $[4,5]$ & \cite{TomClay, jin_polygon_1997} \\
$ 10_{102} $ & $[4,5]$ & \cite{rawdon_upper_2002, jin_polygon_1997} \\
$ 10_{103} $ & $[4,5]$ & \cite{TomClay, jin_polygon_1997} \\
$ 10_{104} $ & $[4,5]$ & \cite{rawdon_upper_2002, jin_polygon_1997} \\
$ 10_{105} $ & $[4,5]$ & \cite{TomClay, jin_polygon_1997} \\
$ 10_{106} $ & $[4,5]$ & \cite{TomClay, jin_polygon_1997} \\
$ 10_{107} $ & $[4,5]$ & \cite{scharein_interactive_1998, jin_polygon_1997} \\
$ 10_{108} $ & $[4,5]$ & \cite{rawdon_upper_2002, jin_polygon_1997} \\
$ 10_{109} $ & $[4,5]$ & \cite{rawdon_upper_2002, jin_polygon_1997} \\
$ 10_{110} $ & $[4,5]$ & \cite{TomClay, jin_polygon_1997} \\
$ 10_{111} $ & $[4,5]$ & \cite{TomClay, jin_polygon_1997} \\
$ 10_{112} $ & $[4,5]$ & \cite{TomClay, jin_polygon_1997} \\
$ 10_{113} $ & $[4,5]$ & \cite{rawdon_upper_2002, jin_polygon_1997} \\
$ 10_{114} $ & $[4,5]$ & \cite{rawdon_upper_2002, jin_polygon_1997} \\
$ 10_{115} $ & $[4,5]$ & \cite{TomClay, jin_polygon_1997} \\
$ 10_{116} $ & $[4,5]$ & \cite{rawdon_upper_2002, jin_polygon_1997} \\
$ 10_{117} $ & $[4,5]$ & \cite{TomClay, jin_polygon_1997} \\
$ 10_{118} $ & $[4,5]$ & \cite{TomClay, jin_polygon_1997} \\
$ 10_{119} $ & $[4,5]$ & \cite{scharein_interactive_1998, jin_polygon_1997} \\
$ 10_{120} $ & $[4,5]$ & \cite{rawdon_upper_2002, jin_polygon_1997} \\
$ 10_{121} $ & $[4,5]$ & \cite{rawdon_upper_2002, jin_polygon_1997} \\
$ 10_{122} $ & $[4,5]$ & \cite{rawdon_upper_2002, jin_polygon_1997} \\
$ 10_{123} $ & $[4,5]$ & \cite{rawdon_upper_2002, jin_polygon_1997} \\
$ 10_{124} $ & 5 & \cite{kuiper_new_1987} \\
$ 10_{125} $ & $[4,5]$ & \cite{rawdon_upper_2002, jin_polygon_1997} \\
$ 10_{126} $ & $[4,5]$ & \cite{TomClay, jin_polygon_1997} \\
$ 10_{127} $ & $[4,5]$ & \cite{rawdon_upper_2002, jin_polygon_1997} \\
$ 10_{128} $ & $[4,5]$ & \cite{rawdon_upper_2002, jin_polygon_1997} \\
$ 10_{129} $ & $[4,5]$ & \cite{rawdon_upper_2002, jin_polygon_1997} \\
$ 10_{130} $ & $[4,5]$ & \cite{rawdon_upper_2002, jin_polygon_1997} \\
$ 10_{131} $ & $[4,5]$ & \cite{TomClay, jin_polygon_1997} \\
$ 10_{132} $ & $[4,5]$ & \cite{rawdon_upper_2002, jin_polygon_1997} \\
$ 10_{133} $ & $[4,5]$ & \cite{TomClay, jin_polygon_1997} \\
$ 10_{134} $ & $[4,5]$ & \cite{rawdon_upper_2002, jin_polygon_1997} \\
$ 10_{135} $ & $[4,5]$ & \cite{rawdon_upper_2002, jin_polygon_1997} \\
$ 10_{136} $ & $[4,5]$ & \cite{rawdon_upper_2002, jin_polygon_1997} \\
$ 10_{137} $ & $[4,5]$ & \cite{TomClay, jin_polygon_1997} \\
$ 10_{138} $ & $[4,5]$ & \cite{TomClay, jin_polygon_1997} \\
$ 10_{139} $ & $[4,5]$ & \cite{rawdon_upper_2002, jin_polygon_1997} \\
$ 10_{140} $ & $[4,5]$ & \cite{rawdon_upper_2002, jin_polygon_1997} \\
$ 10_{141} $ & $[4,5]$ & \cite{rawdon_upper_2002, jin_polygon_1997} \\
$ 10_{142} $ & $[4,5]$ & \cite{TomClay, jin_polygon_1997} \\
$ 10_{143} $ & $[4,5]$ & \cite{TomClay, jin_polygon_1997} \\
$ 10_{144} $ & $[4,5]$ & \cite{rawdon_upper_2002, jin_polygon_1997} \\
$ 10_{145} $ & $[4,5]$ & \cite{rawdon_upper_2002, jin_polygon_1997} \\
$ 10_{146} $ & $[4,5]$ & \cite{rawdon_upper_2002, jin_polygon_1997} \\
$ 10_{147} $ & $[4,5]$ & \cite{scharein_interactive_1998, jin_polygon_1997} \\
$ 10_{148} $ & $[4,5]$ & \cite{TomClay, jin_polygon_1997} \\
$ 10_{149} $ & $[4,5]$ & \cite{TomClay, jin_polygon_1997} \\
$ 10_{150} $ & $[4,5]$ & \cite{rawdon_upper_2002, jin_polygon_1997} \\
$ 10_{151} $ & $[4,5]$ & \cite{rawdon_upper_2002, jin_polygon_1997} \\
$ 10_{152} $ & $[4,5]$ & \cite{shonkwiler_all_2021, jin_polygon_1997} \\
$ 10_{153} $ & $[4,5]$ & \cite{TomClay, jin_polygon_1997} \\
$ 10_{154} $ & $[4,5]$ & \cite{rawdon_upper_2002, jin_polygon_1997} \\
$ 10_{155} $ & $[4,5]$ & \cite{rawdon_upper_2002, jin_polygon_1997} \\
$ 10_{156} $ & $[4,5]$ & \cite{rawdon_upper_2002, jin_polygon_1997} \\
$ 10_{157} $ & $[4,5]$ & \cite{rawdon_upper_2002, jin_polygon_1997} \\
$ 10_{158} $ & $[4,5]$ & \cite{rawdon_upper_2002, jin_polygon_1997} \\
$ 10_{159} $ & $[4,5]$ & \cite{rawdon_upper_2002, jin_polygon_1997} \\
$ 10_{160} $ & $[4,5]$ & \cite{rawdon_upper_2002, jin_polygon_1997} \\
$ 10_{161} $ & $[4,5]$ & \cite{rawdon_upper_2002, jin_polygon_1997} \\
$ 10_{162} $ & $[4,5]$ & \cite{rawdon_upper_2002, jin_polygon_1997} \\
$ 10_{163} $ & $[4,5]$ & \cite{rawdon_upper_2002, jin_polygon_1997} \\
$ 10_{164} $ & $[4,5]$ & \cite{TomClay, jin_polygon_1997} \\
$ 10_{165} $ & $[4,5]$ & \cite{rawdon_upper_2002, jin_polygon_1997} \\
\end{supertabular*}
\end{multicols*}

\end{center}

\newpage

\section{Exact Values of Superbridge Index}\label{sec:exact values}
 
The prime knots through 16 crossings for which the exact value of superbridge index is known.


\begin{center}
\begin{multicols*}{3}
\TrickSupertabularIntoMulticols

\tablefirsthead{
	\multicolumn{1}{l}{$K$} & \multicolumn{2}{l}{$\superbridge[K]$} \\
	\midrule
}
\tablehead{
	\multicolumn{1}{l}{$K$} & \multicolumn{2}{l}{$\superbridge[K]$} \\
	\midrule
}
\tablelasttail{\bottomrule}

\begin{supertabular*}{.26\textwidth}{lll} \label{tab:exact stick numbers}
$ \!\!0_{1} $ & 1 &  \\
$ 3_{1} $ & 3 & \cite{kuiper_new_1987} \\
$ 4_{1} $ & 3 & \cite{jin_polygon_1997,randell_elementary_1994} \\
$ 5_{1} $ & 4 & \cite{kuiper_new_1987} \\
$ 7_{1} $ & 4 & \cite{kuiper_new_1987}\\
$ 7_{5} $ & 4 & \cite{jin_polygon_1997, meissen_edge_1998}\\
$ 7_{6} $ & 4 & \cite{jin_polygon_1997, meissen_edge_1998}\\
$ 7_{7} $ & 4 & \cite{jin_polygon_1997, meissen_edge_1998}\\
$ 8_{1} $ & 4 & \cite{shonkwiler_new_2020} \\
$ 8_{2} $ & 4 & \cite{shonkwiler_new_2020} \\
$ 8_{3} $ & 4 & \cite{shonkwiler_new_2020} \\
$ 8_{5} $ & 4 & \cite{shonkwiler_new_2020} \\
$ 8_{6} $ & 4 & \cite{shonkwiler_new_2020} \\
$ 8_{7} $ & 4 & \cite{shonkwiler_new_2020} \\
$ 8_{8} $ & 4 & \cite{shonkwiler_new_2020} \\
$ 8_{10} $ & 4 & \cite{shonkwiler_new_2020} \\
$ 8_{11} $ & 4 & \cite{shonkwiler_new_2020} \\
$ 8_{12} $ & 4 & \cite{shonkwiler_new_2020} \\
$ 8_{13} $ & 4 & \cite{shonkwiler_new_2020} \\
$ 8_{14} $ & 4 & \cite{shonkwiler_new_2020} \\
$ 8_{15} $ & 4 & \cite{shonkwiler_new_2020} \\
$ 8_{16} $ & 4 & \cite{jin_polygon_1997, rawdon_upper_2002} \\
$ 8_{17} $ & 4 & \cite{jin_polygon_1997, rawdon_upper_2002} \\
$ 8_{18} $ & 4 &  \cite{calvo_geometric_2001, jin_polygon_1997, rawdon_upper_2002} \\
$ 8_{19} $ & 4 & \cite{kuiper_new_1987} \\
$ 8_{20} $ & 4 & \cite{jin_polygon_1997, negami_ramsey_1991, meissen_edge_1998} \\
$ 8_{21} $ & 4 & \cite{jin_polygon_1997, meissen_edge_1998}\\
$ 9_{1} $ & 4 & \cite{kuiper_new_1987} \\
$ 9_{3} $ & 4 & \Cref{thm:main} \\
$ 9_{4} $ & 4 & \Cref{thm:main} \\
$ 9_{6} $ & 4 & \Cref{thm:main} \\
$ 9_{7} $ & 4 & \cite{shonkwiler_new_2020} \\
$ 9_{9} $ & 4 & \Cref{thm:main} \\
$ 9_{11} $ & 4 & \Cref{thm:main} \\
$ 9_{13} $ & 4 & \Cref{thm:main} \\
$ 9_{16} $ & 4 & \cite{shonkwiler_new_2020} \\
$ 9_{17} $ & 4 & \Cref{thm:main} \\
$ 9_{18} $ & 4 & \Cref{thm:main} \\
$ 9_{20} $ & 4 & \cite{shonkwiler_new_2020} \\
$ 9_{22} $ & 4 & \Cref{thm:main} \\
$ 9_{23} $ & 4 & \Cref{thm:main} \\
$ 9_{25} $ & 4 & \Cref{thm:main} \\
$ 9_{26} $ & 4 & \cite{shonkwiler_new_2020} \\
$ 9_{27} $ & 4 & \Cref{thm:main} \\
$ 9_{28} $ & 4 & \cite{shonkwiler_new_2020} \\
$ 9_{29} $ & 4 & \cite{jin_polygon_1997, scharein_interactive_1998} \\
$ 9_{30} $ & 4 & \Cref{thm:main} \\
$ 9_{31} $ & 4 & \Cref{thm:main} \\
$ 9_{32} $ & 4 & \cite{shonkwiler_new_2020} \\
$ 9_{33} $ & 4 & \cite{shonkwiler_new_2020} \\
$ 9_{34} $ & 4 & \cite{jin_polygon_1997, rawdon_upper_2002} \\
$ 9_{35} $ & 4 & \cite{TomClay, jin_polygon_1997} \\
$ 9_{36} $ & 4 & \Cref{thm:main} \\
$ 9_{39} $ & 4 & \cite{TomClay, jin_polygon_1997} \\
$ 9_{40} $ & 4 & \cite{jin_polygon_1997} \\
$ 9_{41} $ & 4 & \cite{jin_polygon_1997} \\
$ 9_{42} $ & 4 & \cite{jin_polygon_1997} \\
$ 9_{43} $ & 4 & \cite{TomClay, jin_polygon_1997} \\
$ 9_{44} $ & 4 & \cite{jin_polygon_1997, rawdon_upper_2002} \\
$ 9_{45} $ & 4 & \cite{TomClay, jin_polygon_1997} \\
$ 9_{46} $ & 4 & \cite{jin_polygon_1997} \\
$ 9_{47} $ & 4 & \cite{jin_polygon_1997, rawdon_upper_2002} \\
$ 9_{48} $ & 4 & \cite{TomClay, jin_polygon_1997} \\
$ 9_{49} $ & 4 & \cite{jin_polygon_1997, rawdon_upper_2002} \\
$ 10_{124} $ & 5 & \cite{kuiper_new_1987} \\
$ 11a_{367} $ & 4 & \cite{kuiper_new_1987} \\
$ 11n_{71} $ & 5 & \cite{TomClay, jin_polygon_1997} \\
$ 11n_{72} $ & 5 & \Cref{thm:main} \\
$ 11n_{73} $ & 5 & \cite{TomClay, jin_polygon_1997} \\
$ 11n_{74} $ & 5 & \cite{TomClay, jin_polygon_1997} \\
$ 11n_{75} $ & 5 & \cite{TomClay, jin_polygon_1997} \\
$ 11n_{76} $ & 5 & \cite{TomClay, jin_polygon_1997} \\
$ 11n_{77} $ & 5 & \Cref{thm:main} \\
$ 11n_{78} $ & 5 & \cite{TomClay, jin_polygon_1997} \\
$ 11n_{81} $ & 5 & \cite{TomClay, jin_polygon_1997} \\
$ 12n_{60} $ & 5 & \Cref{thm:main} \\
$ 12n_{66} $ & 5 & \Cref{thm:main} \\
$ 12n_{219} $ & 5 & \Cref{thm:main} \\
$ 12n_{225} $ & 5 & \Cref{thm:main} \\
$ 12n_{553} $ & 5 & \Cref{thm:main} \\
$ 13a_{4878} $ & 4 & \cite{kuiper_new_1987} \\
$ 13n_{226} $ & 5 & \cite{shonkwiler_new_2020} \\
$ 13n_{285} $ & 5 & \cite{blair_knots_2020, jin_polygon_1997} \\
$ 13n_{293} $ & 5 & \cite{blair_knots_2020, jin_polygon_1997} \\
$ 13n_{328} $ & 5 & \cite{shonkwiler_new_2020} \\
$ 13n_{342} $ & 5 & \cite{shonkwiler_new_2020} \\
$ 13n_{343} $ & 5 & \cite{shonkwiler_new_2020} \\
$ 13n_{350} $ & 5 & \cite{shonkwiler_new_2020} \\
$ 13n_{512} $ & 5 & \cite{shonkwiler_new_2020} \\
$ 13n_{587} $ & 5 & \cite{blair_knots_2020, jin_polygon_1997} \\
$ 13n_{592} $ & 5 & \cite{blair_knots_2020, jin_polygon_1997} \\
$ 13n_{607} $ & 5 & \cite{blair_knots_2020, jin_polygon_1997} \\
$ 13n_{611} $ & 5 & \cite{blair_knots_2020, jin_polygon_1997} \\
$ 13n_{835} $ & 5 & \cite{blair_knots_2020, jin_polygon_1997} \\
$ 13n_{973} $ & 5 & \cite{shonkwiler_new_2020} \\
$ 13n_{1177} $ & 5 & \cite{blair_knots_2020, jin_polygon_1997} \\
$ 13n_{1192} $ & 5 & \cite{blair_knots_2020, jin_polygon_1997} \\
$ 13n_{2641} $ & 5 & \cite{shonkwiler_new_2020} \\
$ 13n_{5018} $ & 5 & \cite{shonkwiler_new_2020} \\
$ 14n_{1753} $ & 5 & \cite{shonkwiler_new_2020} \\
$ 14n_{21881}$ & 6 & \cite{kuiper_new_1987} \\
$ 15a_{85263} $ & 4 & \cite{kuiper_new_1987} \\
$ 15n_{41126} $ & 5 & \cite{blair_knots_2020, jin_polygon_1997} \\
$ 15n_{41127} $ & 5 & \cite{blair_knots_2020, jin_polygon_1997} \\
$ 15n_{41185} $ & 5 & \cite{kuiper_new_1987} \\
$ 16n_{783154} $ & 6 & \cite{kuiper_new_1987} \\

\end{supertabular*}
\end{multicols*}

\end{center}

\section{Knot Images and Coordinates} 
\label{sec:coords}

\Crefname{theorem}{Theorem}{Theorems}

This section gives visualizations and coordinates of each of the knot realizations used in the proof of \Cref{thm:main}. The knot coordinates are shown in the three columns to the right of the image and normalized so that the first vertex is at the origin, the second is on the positive $x$-axis, and the third is in the $xy$-plane with positive $y$-coordinate. Each knot is shown in orthographic perspective from the direction of the positive $z$-axis. 

For $11n_{72}$ and $12n_{553}$ the coordinates, in conjunction with \Cref{thm:JinBound}, are enough to certify the desired upper bound on superbridge index. For the remaining knots, the additional certificate is given below the visualization and coordinates: the vector $\vec{u}$ satisfying \eqref{eq:even gordan} for $9_{22}$, $11n_{77}$, $12n_{60}$, and $12n_{219}$ and the matrix $U$ from \Cref{cor:Gordan2} for the remaining knots.

The original floating-point coordinates can be downloaded from the {\tt stick-knot-gen} project~\cite{stick-knot-gen}.

\begin{center}	
	
	\begin{tabular*}{0.75\textwidth}{C{2.2in} R{.4in} R{.4in} R{.4in}}
	\multicolumn{4}{c}{$9_{3}$} \\
	\cline{1-4}\noalign{\smallskip}
	\multirow{12}{*}{\includegraphics[height=1.8in,width=1.8in,keepaspectratio]{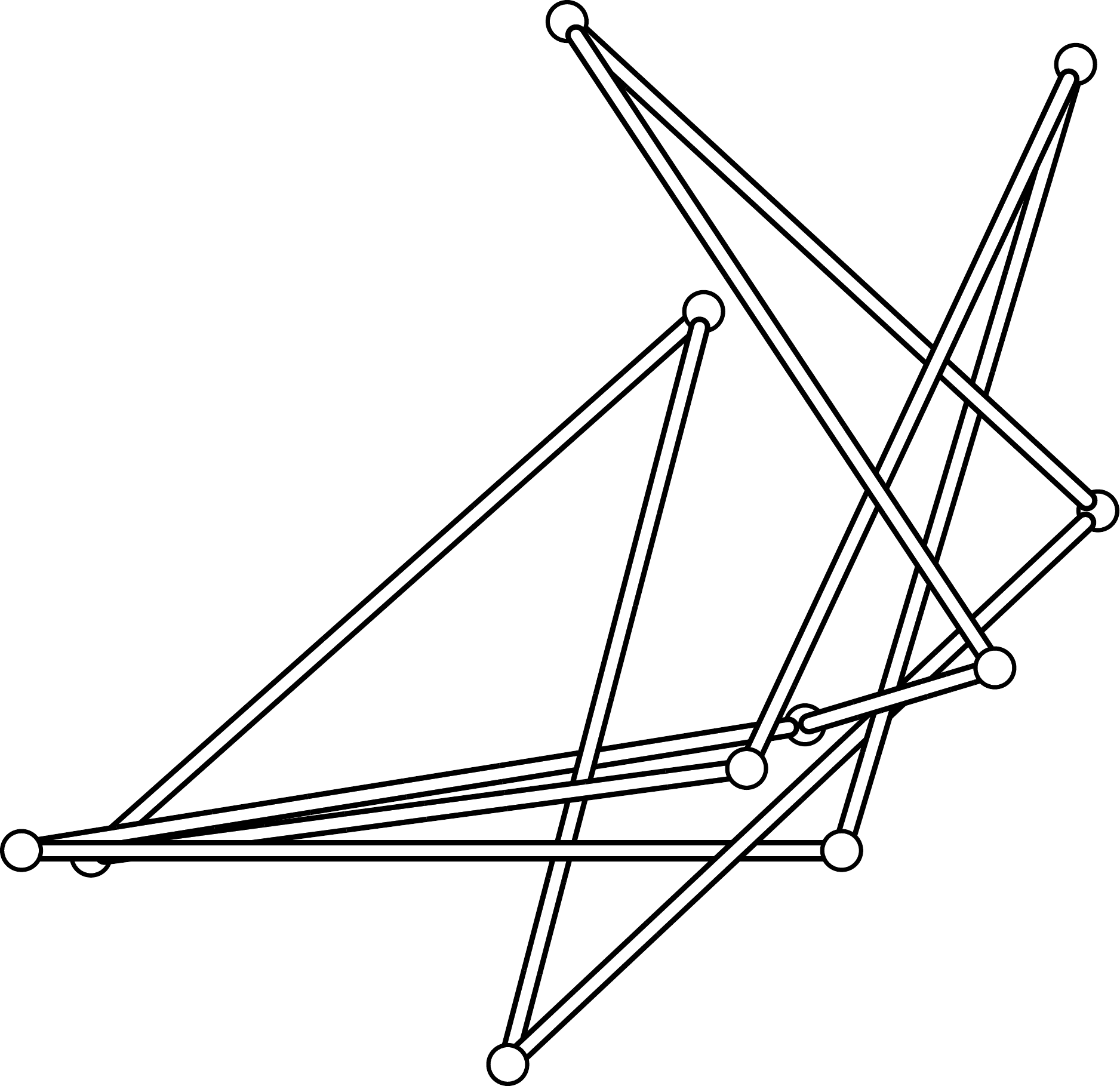}}
	& $0$ & $0$ & $0$ \\
	& $1000$ & $0$ & $0$ \\
	& $1285$ & $958$ & $0$ \\
	& $884$ & $100$ & $320$ \\
	& $85$ & $-7$ & $-271$ \\
	& $832$ & $657$ & $-238$ \\
	& $593$ & $-261$ & $79$ \\
	& $1312$ & $414$ & $-84$ \\
	& $665$ & $1011$ & $390$ \\
	& $1187$ & $223$ & $717$ \\
	& $955$ & $153$ & $-253$\\
	\end{tabular*}
	 \[\begin{tiny}\begin{bmatrix}$1$ & $1$ & $1$ & $1$ & $1$ & $1$ & $1$ \
	& $1$ & $1$ & $1$ & $1$ \\
	$1$ & $1$ & $1$ & $1$ & $1$ & $253078910$ & $1$ & $1$ & $1$ & $1$ & \
	$1$ \\
	$1$ & $1$ & $1$ & $1$ & $1$ & $1$ & $115035555$ & $1$ & $1$ & $1$ & \
	$14393727845$ \\
	$1$ & $657704980$ & $1$ & $418578429$ & $1052504464$ & $376633966$ & \
	$1$ & $1$ & $1$ & $34718493879$ & $1$ \\
	$1240280213$ & $1$ & $417305507$ & $418578431$ & $1$ & $1$ & $1$ & \
	$3652583432$ & $4402732403$ & $1$ & $1$ \\
	$1$ & $844978640$ & $3$ & $1$ & $802489366$ & $1$ & $1$ & $1$ & $1$ & \
	$1$ & $14393727847$ \\
	$1019422757$ & $5$ & $1$ & $2$ & $1$ & $1$ & $1$ & $1$ & $1$ & \
	$34718493881$ & $1$ \\
	$161093877$ & $481696428$ & $71775051$ & $805487149$ & $69082871$ & \
	$156521556$ & $50869993$ & $456789626$ & $216223141$ & $3474445726$ & \
	$609606365$ \\
	$144814783$ & $560343148$ & $197011937$ & $484763007$ & $794003728$ & \
	$204365028$ & $447932175$ & $1210747939$ & $3426142044$ & \
	$1272610542$ & $941105511$ \\
	$2700502673$ & $266444904$ & $774303440$ & $113750373$ & $517843618$ \
	& $267883710$ & $46731239$ & $1840267792$ & $355175436$ & \
	$4614965266$ & $311341360$ \\
	$1505298523$ & $49381178$ & $561386626$ & $165704140$ & $391757632$ & \
	$207156947$ & $500321247$ & $1410752510$ & $1876199764$ & $457419663$ \
	& $1438618133$\end{bmatrix}\end{tiny}\]

	\medskip

	\begin{tabular*}{0.75\textwidth}{C{2.2in} R{.4in} R{.4in} R{.4in}}
	\multicolumn{4}{c}{$9_{4}$} \\
	\cline{1-4}\noalign{\smallskip}
	\multirow{12}{*}{\includegraphics[height=1.8in,width=1.8in,keepaspectratio]{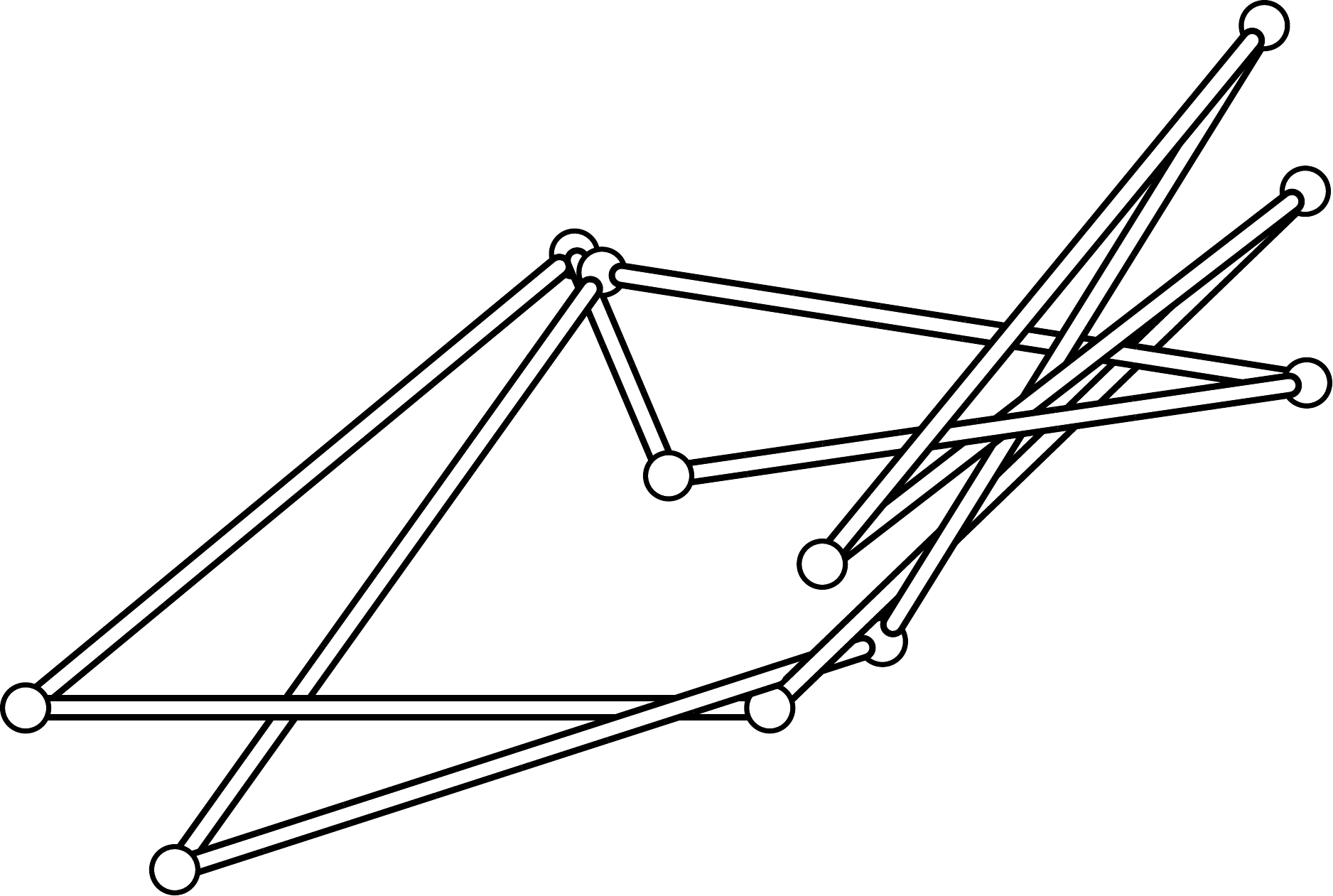}}
	& $0$ & $0$ & $0$ \\
	& $1000$ & $0$ & $0$ \\
	& $1720$ & $694$ & $0$ \\
	& $1071$ & $193$ & $572$ \\
	& $1665$ & $917$ & $221$ \\
	& $1151$ & $89$ & $-6$ \\
	& $200$ & $-217$ & $33$ \\
	& $775$ & $585$ & $-127$ \\
	& $1722$ & $437$ & $157$ \\
	& $864$ & $312$ & $657$ \\
	& $738$ & $610$ & $-289$\\
	\end{tabular*}
	 \[\begin{tiny}\begin{bmatrix}$1$ & $1$ & $1$ & $1$ & $1$ & $1$ & $1$ \
	& $1$ & $1$ & $1$ & $1$ \\
	$1$ & $1$ & $253120999$ & $1$ & $1$ & $1$ & $1$ & $1$ & $1$ & $1$ & \
	$1$ \\
	$1$ & $1$ & $1$ & $1$ & $1$ & $1$ & $152586239$ & $1$ & $1$ & $1$ & \
	$1$ \\
	$1$ & $5387428758$ & $1$ & $1$ & $186051641$ & $1$ & $681384698$ & \
	$1$ & $1$ & $1$ & $1$ \\
	$1$ & $1$ & $1$ & $3$ & $122188121$ & $644758944$ & $1$ & $1$ & \
	$2552652001$ & $1101905682$ & $785500291$ \\
	$412108083$ & $2$ & $1$ & $1$ & $1$ & $1$ & $1$ & $2668475769$ & $1$ \
	& $1$ & $1$ \\
	$1236324265$ & $1$ & $3$ & $28$ & $1$ & $6$ & $528066389$ & $1$ & $1$ \
	& $2038333860$ & $3$ \\
	$15266551$ & $29960368$ & $43047363$ & $7815782$ & $23258145$ & \
	$126807779$ & $46420557$ & $3298398784$ & $655245447$ & $2004553$ & \
	$284780083$ \\
	$207562100$ & $628952294$ & $72634796$ & $19842987$ & $280413639$ & \
	$330174727$ & $24566174$ & $578118108$ & $826423938$ & $1204464626$ & \
	$404153149$ \\
	$737253847$ & $3360035438$ & $396646951$ & $20285775$ & $39377003$ & \
	$41058410$ & $2079635$ & $4867082435$ & $1168804072$ & $161286807$ & \
	$458754813$ \\
	$1464876148$ & $1809746556$ & $78702405$ & $33058272$ & $151532393$ & \
	$185390157$ & $178604983$ & $83268244$ & $1940788369$ & $48127479$ & \
	$1304057157$\end{bmatrix}\end{tiny}\]

	\medskip

	\begin{tabular*}{0.75\textwidth}{C{2.2in} R{.4in} R{.4in} R{.4in}}
	\multicolumn{4}{c}{$9_{6}$} \\
	\cline{1-4}\noalign{\smallskip}
	\multirow{12}{*}{\includegraphics[height=1.8in,width=1.8in,keepaspectratio]{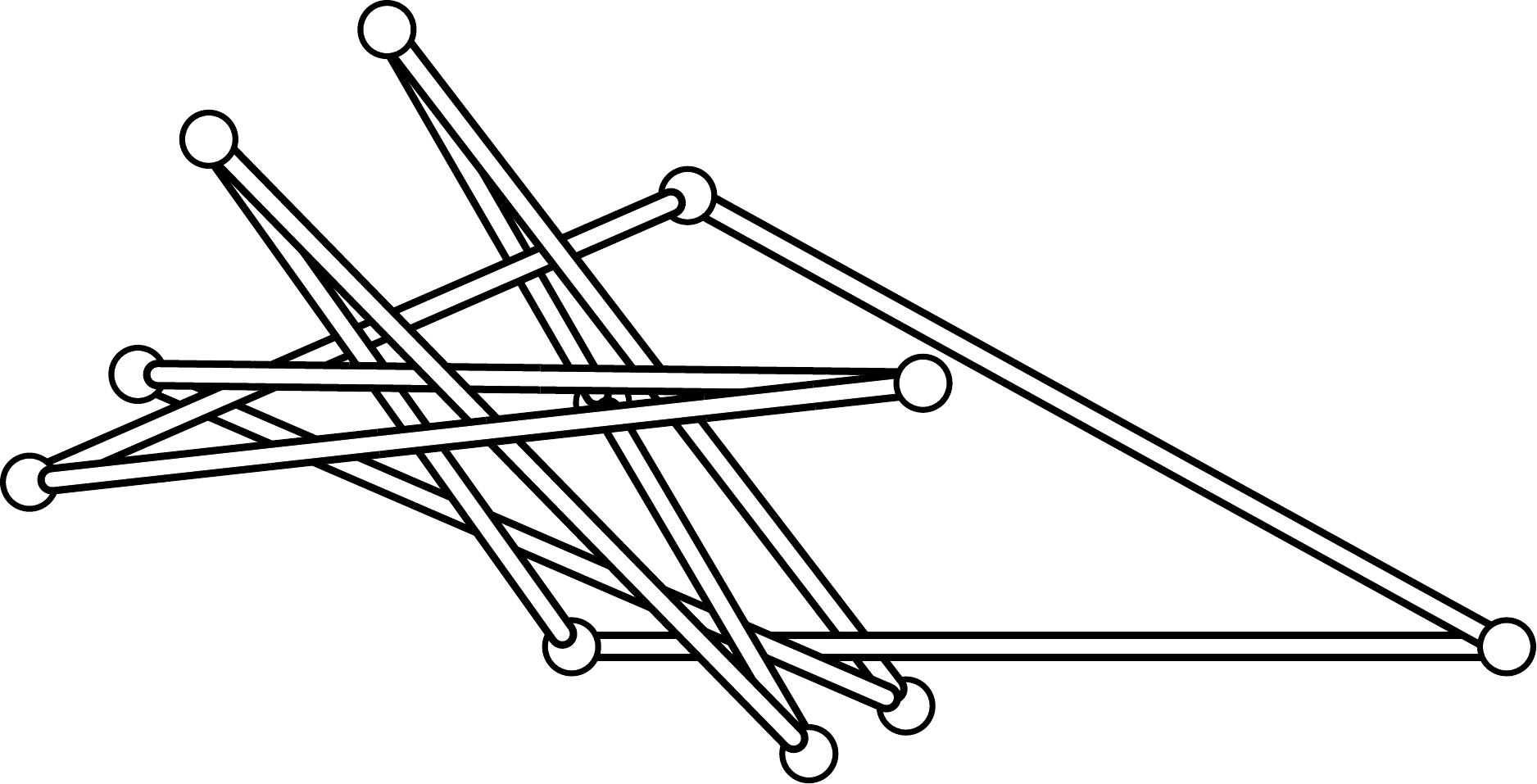}}
	& $0$ & $0$ & $0$ \\
	& $1000$ & $0$ & $0$ \\
	& $124$ & $483$ & $0$ \\
	& $-580$ & $176$ & $641$ \\
	& $376$ & $282$ & $916$ \\
	& $-464$ & $291$ & $373$ \\
	& $358$ & $-63$ & $-74$ \\
	& $-197$ & $660$ & $337$ \\
	& $33$ & $261$ & $-551$ \\
	& $254$ & $-114$ & $350$ \\
	& $-388$ & $543$ & $745$\\
	\end{tabular*}
	 \[\begin{tiny}\begin{bmatrix}$1$ & $1$ & $1$ & $1$ & $1$ & $1$ & $1$ \
	& $1$ & $1$ & $1$ & $1$ \\
	$1$ & $2$ & $1$ & $1$ & $1$ & $1$ & $1$ & $1$ & $1$ & $1112864263$ & \
	$1$ \\
	$2$ & $1$ & $1$ & $1$ & $1$ & $1$ & $1$ & $1$ & $3181145830$ & $1$ & \
	$1$ \\
	$1$ & $1$ & $7986790093$ & $1$ & $40446313$ & $1643510504$ & $1$ & \
	$1$ & $1$ & $1$ & $470098718$ \\
	$1$ & $39162228$ & $1$ & $372465081$ & $20223159$ & $1$ & $233983495$ \
	& $2105861$ & $651976511$ & $1059612083$ & $1$ \\
	$73359287$ & $1$ & $1568714775$ & $186232543$ & $3$ & $1$ & $1$ & $4$ \
	& $2475981364$ & $1$ & $1$ \\
	$32$ & $4$ & $4777752436$ & $3$ & $20223159$ & $1$ & $1$ & $41$ & $1$ \
	& $1$ & $3010168756$ \\
	$24566297$ & $6861048$ & $3862337760$ & $26373238$ & $29977143$ & \
	$772016210$ & $17589227$ & $301023$ & $55104480$ & $16402257$ & \
	$48618876$ \\
	$15491901$ & $17967203$ & $254074348$ & $345511823$ & $13809478$ & \
	$241875406$ & $100606357$ & $1322904$ & $387810364$ & $149231759$ & \
	$24708006$ \\
	$61967286$ & $15123722$ & $171720159$ & $354007729$ & $85119263$ & \
	$82477621$ & $239747297$ & $1371603$ & $254464657$ & $113217341$ & \
	$3471836470$ \\
	$1296156$ & $29089834$ & $107131205$ & $127673557$ & $79207922$ & \
	$2310398106$ & $116060225$ & $1116670$ & $163161428$ & $126899665$ & \
	$265822441$\end{bmatrix}\end{tiny}\]

	\medskip

	\begin{tabular*}{0.75\textwidth}{C{2.2in} R{.4in} R{.4in} R{.4in}}
	\multicolumn{4}{c}{$9_{9}$} \\
	\cline{1-4}\noalign{\smallskip}
	\multirow{12}{*}{\includegraphics[height=1.8in,width=1.8in,keepaspectratio]{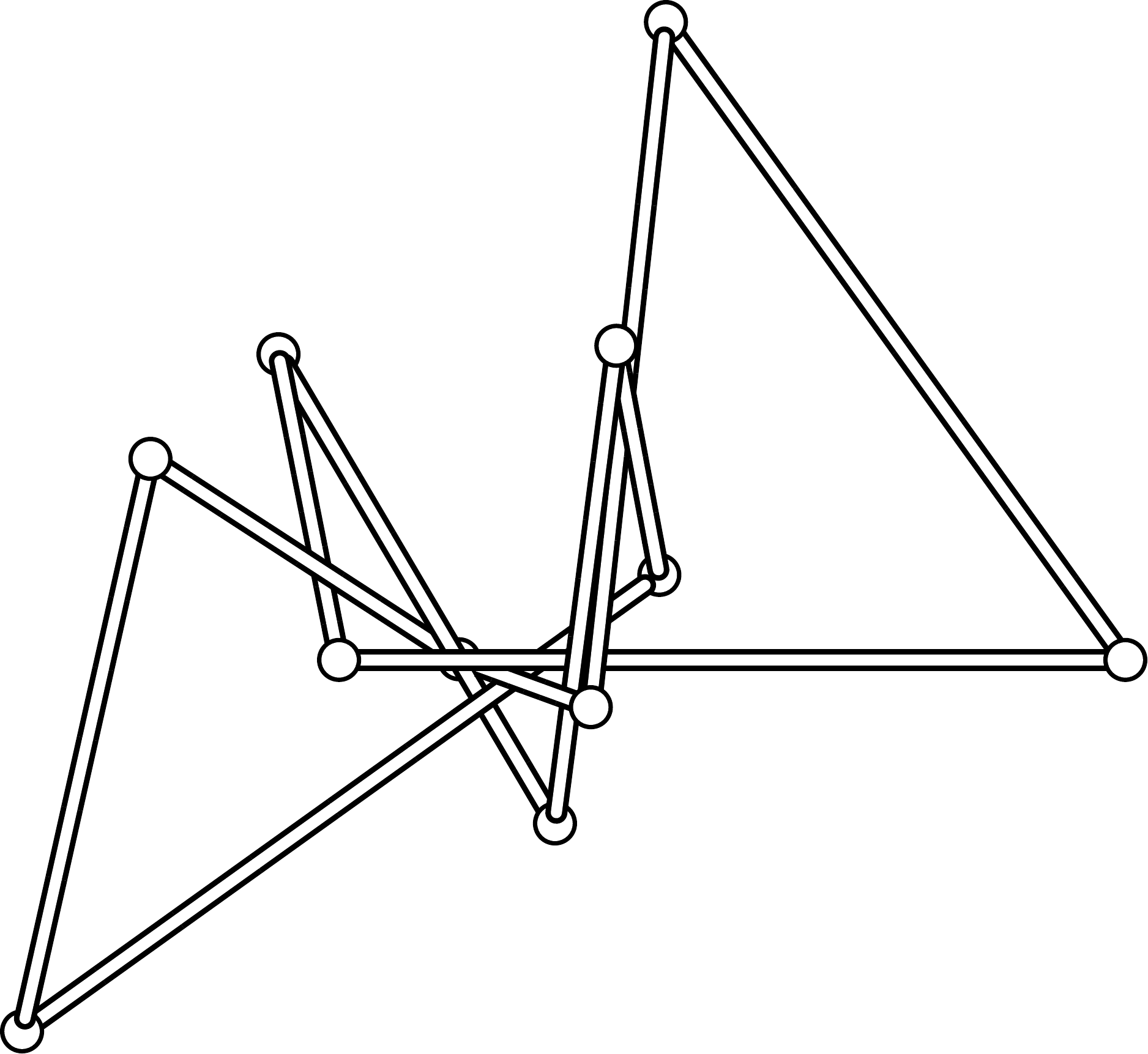}}
	& $0$ & $0$ & $0$ \\
	& $1000$ & $0$ & $0$ \\
	& $416$ & $812$ & $0$ \\
	& $320$ & $-60$ & $481$ \\
	& $153$ & $1$ & $-503$ \\
	& $-240$ & $256$ & $380$ \\
	& $-403$ & $-473$ & $-285$ \\
	& $407$ & $108$ & $-362$ \\
	& $353$ & $400$ & $593$ \\
	& $275$ & $-208$ & $-197$ \\
	& $-77$ & $389$ & $-918$\\
	\end{tabular*}
	 \[\begin{tiny}\begin{bmatrix}$1$ & $1$ & $1$ & $1$ & $1$ & $1$ & $1$ \
	& $1$ & $1$ & $1$ & $1$ \\
	$1$ & $1$ & $1$ & $1$ & $82369555$ & $1$ & $1$ & $12088042132$ & $1$ \
	& $1$ & $1$ \\
	$1$ & $1$ & $1$ & $1$ & $1$ & $1$ & $3617893533$ & $1$ & $1$ & \
	$340470995$ & $1$ \\
	$1280139731$ & $1$ & $485167232$ & $1$ & $1$ & $1599254287$ & $1$ & \
	$1$ & $1$ & $1$ & $1$ \\
	$1$ & $1222246805$ & $338959822$ & $822774430$ & $608229085$ & $1$ & \
	$1$ & $12683277116$ & $486696118$ & $1$ & $4520617633$ \\
	$409077221$ & $5$ & $1$ & $1$ & $3$ & $553545366$ & $3853126796$ & \
	$1$ & $1$ & $3617552792$ & $1$ \\
	$1361329507$ & $6$ & $6$ & $3$ & $1$ & $1$ & $1$ & $1$ & $461787743$ \
	& $2$ & $1$ \\
	$20527753$ & $33291101$ & $127503941$ & $21005683$ & $131493692$ & \
	$232628177$ & $94136669$ & $8530638316$ & $242586007$ & $1626629105$ \
	& $283930053$ \\
	$212668915$ & $76372687$ & $150120702$ & $184229591$ & $113543439$ & \
	$1549831641$ & $2197335797$ & $918626786$ & $29157611$ & $1546137017$ \
	& $600743077$ \\
	$69812503$ & $1482688423$ & $380624447$ & $464206875$ & $633063141$ & \
	$178026749$ & $827882903$ & $103156074$ & $1646260070$ & $1947670452$ \
	& $7136269271$ \\
	$19596715$ & $813474909$ & $507309528$ & $887474901$ & $277868413$ & \
	$964005665$ & $325728759$ & $607309893$ & $976258733$ & $74657653$ & \
	$9140757391$\end{bmatrix}\end{tiny}\]

	\medskip

	\begin{tabular*}{0.75\textwidth}{C{2.2in} R{.4in} R{.4in} R{.4in}}
	\multicolumn{4}{c}{$9_{11}$} \\
	\cline{1-4}\noalign{\smallskip}
	\multirow{12}{*}{\includegraphics[height=1.8in,width=1.8in,keepaspectratio]{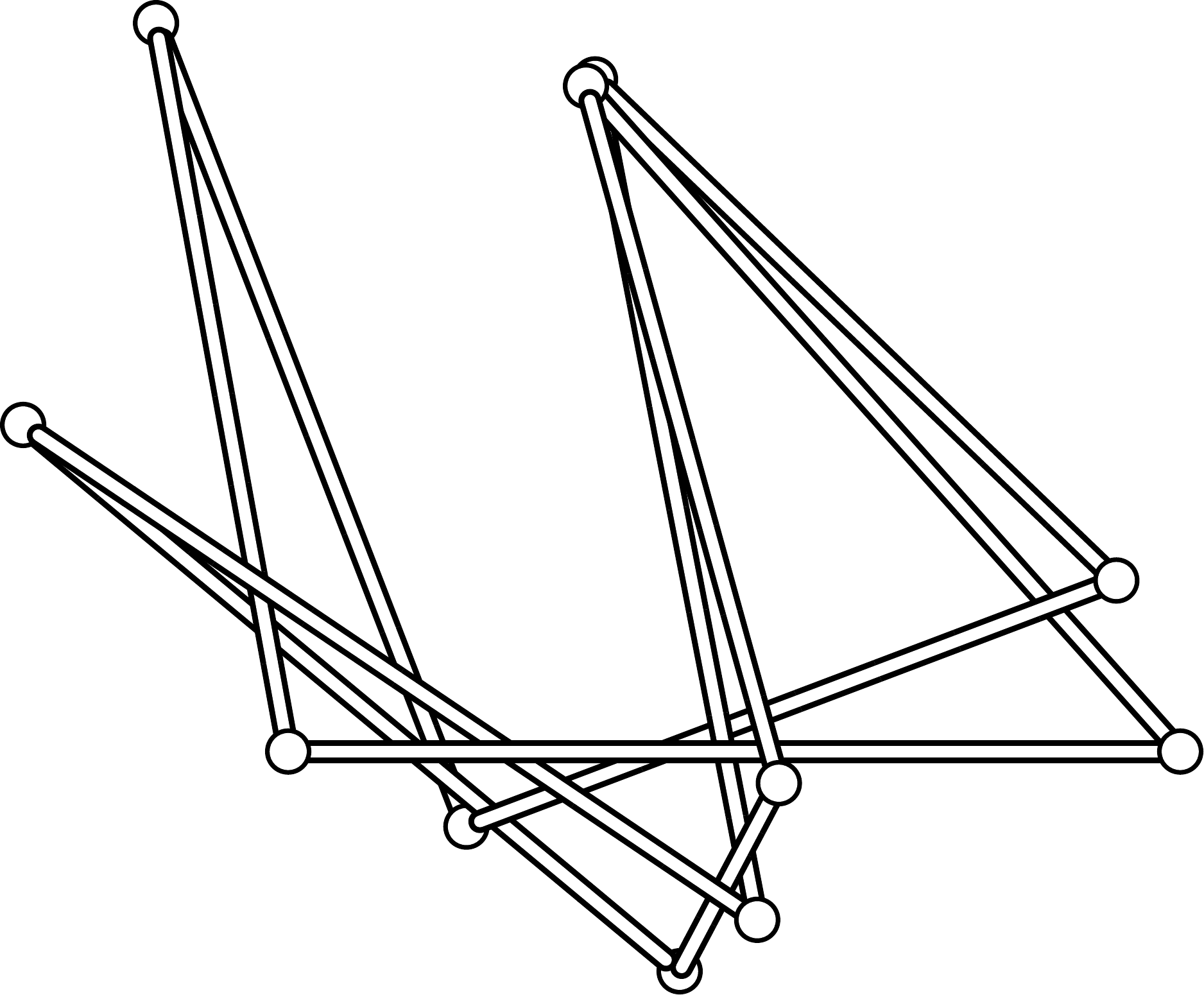}}
	& $0$ & $0$ & $0$ \\
	& $1000$ & $0$ & $0$ \\
	& $334$ & $746$ & $0$ \\
	& $550$ & $-35$ & $586$ \\
	& $439$ & $-246$ & $-386$ \\
	& $-297$ & $366$ & $-97$ \\
	& $526$ & $-189$ & $26$ \\
	& $344$ & $753$ & $-256$ \\
	& $928$ & $192$ & $330$ \\
	& $200$ & $-84$ & $-297$ \\
	& $-148$ & $816$ & $-558$\\
	\end{tabular*}
	 \[\begin{tiny}\begin{bmatrix}$1$ & $1$ & $1$ & $1$ & $1$ & \
	$2368626755$ & $1$ & $1$ & $1$ & $1$ & $1$ \\
	$1$ & $69986829$ & $1$ & $1$ & $4608568067$ & $1$ & $1$ & $1$ & $1$ & \
	$30821056$ & $1$ \\
	$4046685$ & $1$ & $1$ & $1$ & $1$ & $1$ & $1$ & $1$ & $1$ & $1$ & $1$ \
	\\
	$1$ & $3$ & $22114216320$ & $1$ & $1$ & $1$ & $22158061198$ & $1$ & \
	$4153582949$ & $91844549$ & $36384960$ \\
	$1348895$ & $1$ & $1$ & $1$ & $8168076137$ & $2427291954$ & $1$ & \
	$870518840$ & $1$ & $4$ & $1$ \\
	$1348894$ & $126304379$ & $1$ & $673126343$ & $1$ & $1$ & $1$ & $1$ & \
	$5$ & $1$ & $12128320$ \\
	$9969755$ & $6$ & $22675411749$ & $6$ & $1$ & $1$ & $1$ & $5$ & $1$ & \
	$4$ & $7$ \\
	$1141419$ & $14318880$ & $120033110$ & $15770539$ & $736969202$ & \
	$3205461$ & $6055513833$ & $59164979$ & $4170058139$ & $50712235$ & \
	$8554037$ \\
	$6779899$ & $4032158$ & $3968770841$ & $170567743$ & $2030820005$ & \
	$287273843$ & $1413115939$ & $793788443$ & $220881720$ & $47396577$ & \
	$11998081$ \\
	$2511319$ & $1585876$ & $11636038$ & $568931205$ & $63028077$ & \
	$1055081825$ & $363272221$ & $371630189$ & $394308011$ & $17691094$ & \
	$90872208$ \\
	$4828667$ & $93092152$ & $428239011$ & $162250729$ & $1450076601$ & \
	$816658061$ & $21682732901$ & $316274757$ & $1285917521$ & $2503251$ \
	& $80746560$\end{bmatrix}\end{tiny}\]

	\medskip

	\begin{tabular*}{0.75\textwidth}{C{2.2in} R{.4in} R{.4in} R{.4in}}
	\multicolumn{4}{c}{$9_{13}$} \\
	\cline{1-4}\noalign{\smallskip}
	\multirow{12}{*}{\includegraphics[height=1.8in,width=1.8in,keepaspectratio]{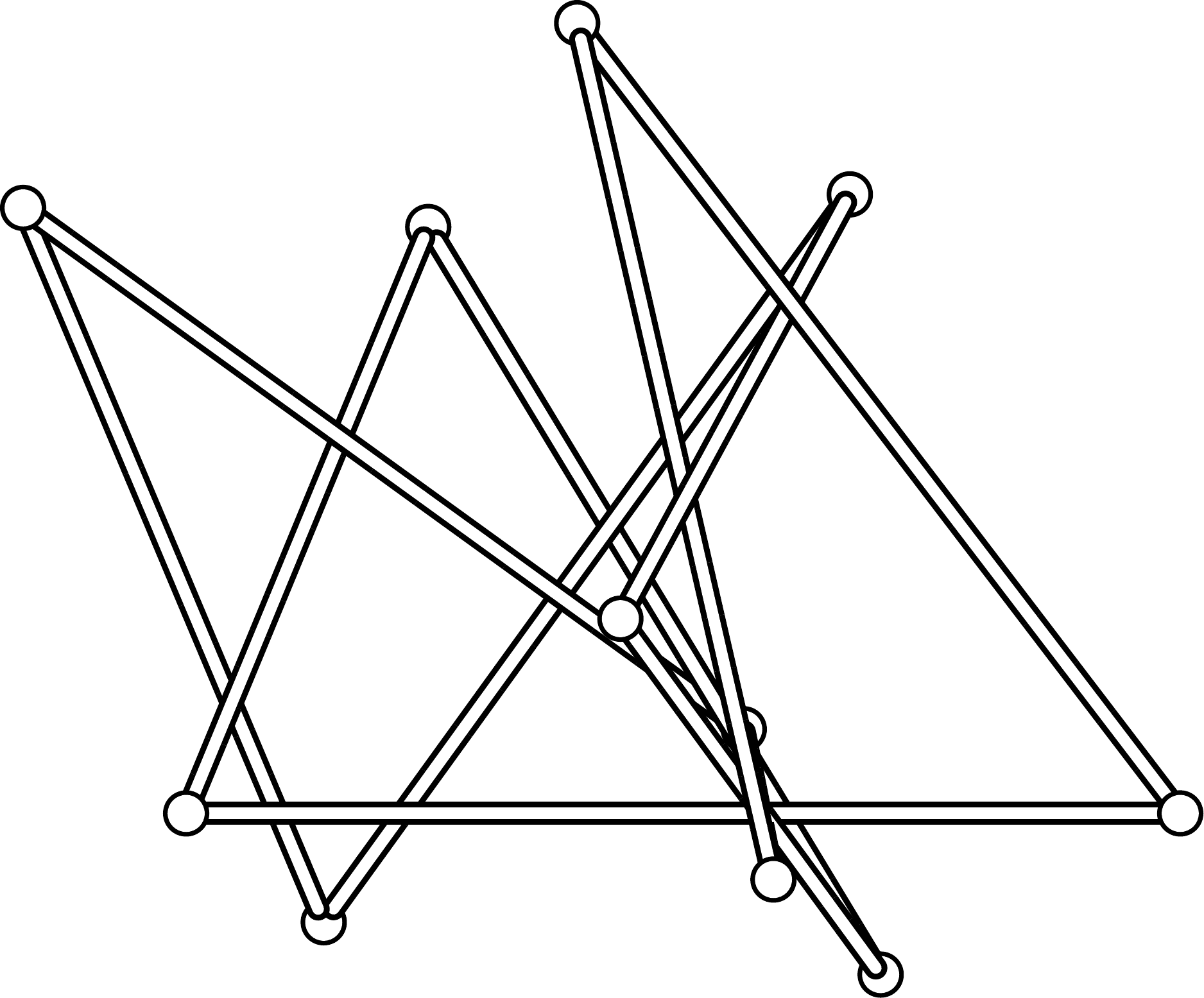}}
	& $0$ & $0$ & $0$ \\
	& $1000$ & $0$ & $0$ \\
	& $393$ & $795$ & $0$ \\
	& $591$ & $-66$ & $468$ \\
	& $561$ & $85$ & $-520$ \\
	& $-164$ & $609$ & $-74$ \\
	& $138$ & $-110$ & $-700$ \\
	& $668$ & $623$ & $-271$ \\
	& $437$ & $196$ & $603$ \\
	& $699$ & $-162$ & $-293$ \\
	& $244$ & $590$ & $-770$\\
	\end{tabular*}
	 \[\begin{tiny}\begin{bmatrix}$1$ & $1$ & $1$ & $1$ & $1$ & $1$ & $1$ \
	& $1$ & $1$ & $1$ & $1$ \\
	$1$ & $1$ & $1$ & $1$ & $1$ & $1$ & $1$ & $12432112632$ & $1$ & $1$ & \
	$1$ \\
	$1$ & $1$ & $1$ & $1$ & $1$ & $1$ & $1$ & $11339684105$ & $1$ & $1$ & \
	$1$ \\
	$1$ & $706806593$ & $36668881$ & $1$ & $1$ & $427052599$ & \
	$1798226748$ & $1$ & $1$ & $475777410$ & $1$ \\
	$1267349485$ & $1$ & $1$ & $3356421$ & $1775635611$ & $1$ & $1$ & $1$ \
	& $11554699858$ & $1$ & $610536151$ \\
	$1$ & $176701650$ & $62185086$ & $9$ & $1$ & $120609105$ & \
	$2745527355$ & $23509882335$ & $1$ & $321874077$ & $610536163$ \\
	$3$ & $353403299$ & $9$ & $36$ & $1$ & $5$ & $1$ & $1$ & $5972539667$ \
	& $10$ & $1$ \\
	$1374953025$ & $12777575$ & $19154908$ & $140941$ & $346309809$ & \
	$102880774$ & $14908368$ & $1114940086$ & $368176352$ & $160006563$ & \
	$48285653$ \\
	$214545333$ & $29260734$ & $1175349$ & $1049985$ & $734912703$ & \
	$320436874$ & $121551897$ & $4360197896$ & $56937195$ & $246871547$ & \
	$925523685$ \\
	$436700012$ & $247137186$ & $66812691$ & $1046909$ & $776204205$ & \
	$66632749$ & $3387707526$ & $736947444$ & $12676105547$ & $184758886$ \
	& $729412089$ \\
	$31438527$ & $541200748$ & $53926501$ & $3539751$ & $591593527$ & \
	$304718771$ & $1876484687$ & $132896953$ & $3072164699$ & $112270009$ \
	& $170069412$\end{bmatrix}\end{tiny}\]

	\medskip

	\begin{tabular*}{0.75\textwidth}{C{2.2in} R{.4in} R{.4in} R{.4in}}
	\multicolumn{4}{c}{$9_{17}$} \\
	\cline{1-4}\noalign{\smallskip}
	\multirow{12}{*}{\includegraphics[height=1.8in,width=1.8in,keepaspectratio]{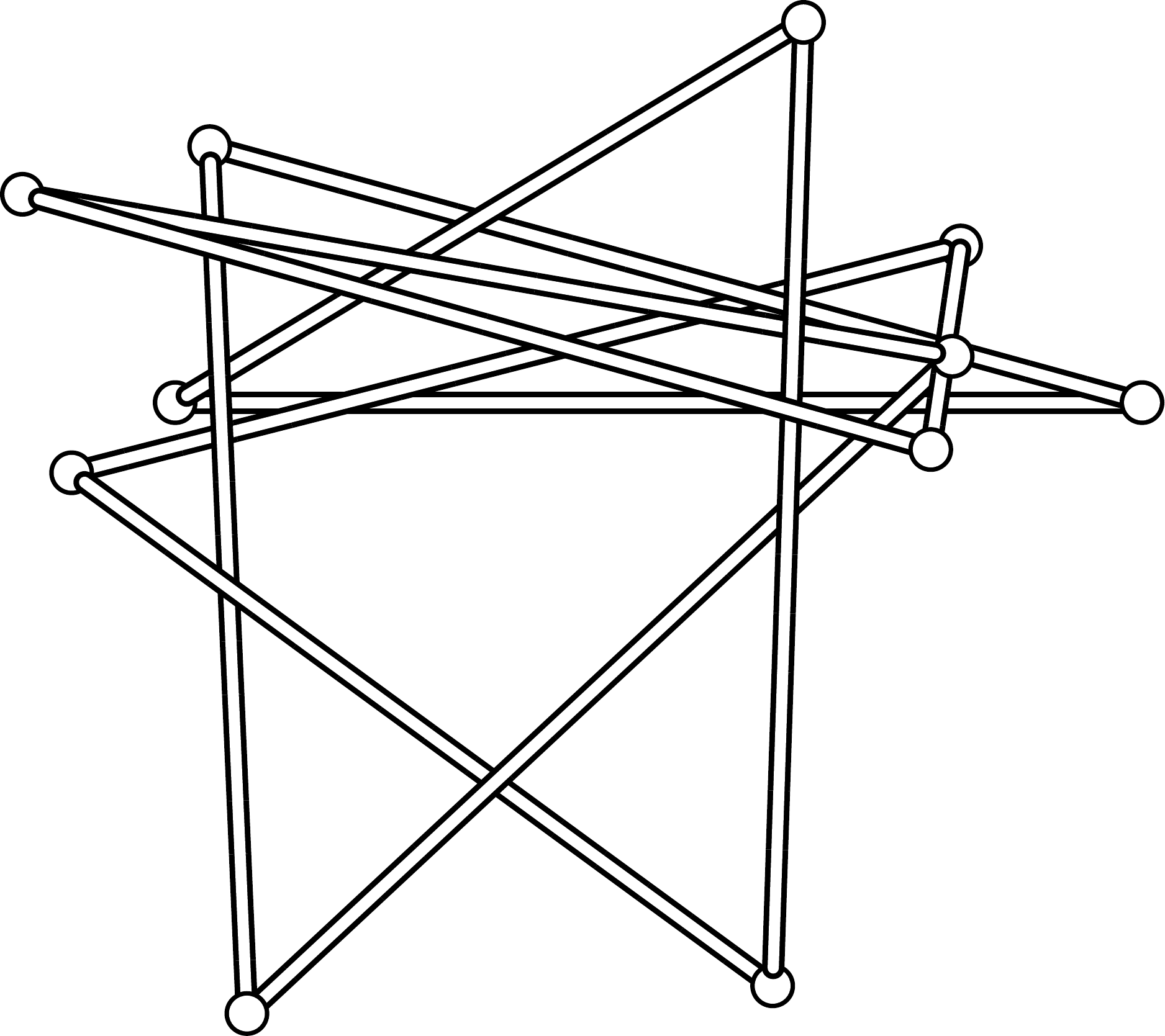}}
	& $0$ & $0$ & $0$ \\
	& $1000$ & $0$ & $0$ \\
	& $36$ & $265$ & $0$ \\
	& $74$ & $-632$ & $440$ \\
	& $804$ & $48$ & $367$ \\
	& $-158$ & $216$ & $584$ \\
	& $782$ & $-49$ & $802$ \\
	& $812$ & $162$ & $-175$ \\
	& $-107$ & $-73$ & $140$ \\
	& $618$ & $-603$ & $579$ \\
	& $650$ & $393$ & $650$\\
	\end{tabular*}
	 \[\begin{tiny}\begin{bmatrix}$1$ & $1$ & $1$ & $1$ & $1$ & $1$ & $1$ \
	& $1$ & $1$ & $1$ & $1$ \\
	$1$ & $1$ & $1$ & $1$ & $1$ & $1$ & $314115327$ & $1$ & $745450626$ & \
	$1$ & $1$ \\
	$1$ & $1600292453$ & $1$ & $1$ & $1$ & $190601031$ & $1$ & $1$ & $1$ \
	& $1$ & $1$ \\
	$51244071966$ & $1$ & $1$ & $1$ & $1262924033$ & $1$ & $1$ & \
	$3567406692$ & $1$ & $1$ & $1$ \\
	$1$ & $1$ & $811883609$ & $3321330367$ & $1$ & $1$ & $1$ & $1$ & \
	$1610556123$ & $1$ & $5768161968$ \\
	$1$ & $1441937750$ & $811883609$ & $1$ & $745359468$ & $1$ & \
	$891553050$ & $1$ & $1$ & $5588584361$ & $1$ \\
	$1$ & $315170260$ & $11$ & $1$ & $1$ & $15$ & $3$ & $1$ & $1$ & $1$ & \
	$3$ \\
	$711452253$ & $146509029$ & $163808559$ & $201344147$ & $273695364$ & \
	$16478531$ & $51636951$ & $146789537$ & $193794429$ & $2345490291$ & \
	$5226457968$ \\
	$1384779802$ & $32324113$ & $80290392$ & $300782365$ & $373632247$ & \
	$44653888$ & $382761166$ & $1525858575$ & $166933119$ & $2461084575$ \
	& $992448746$ \\
	$17436879675$ & $1014438289$ & $1191360764$ & $3717284015$ & \
	$653712347$ & $76576878$ & $236870855$ & $395113987$ & $877438948$ & \
	$4189549685$ & $1727525214$ \\
	$60159279090$ & $1064293944$ & $13186358$ & $2432067637$ & \
	$655203665$ & $259387067$ & $105123646$ & $2180349855$ & $1620622984$ \
	& $209162485$ & $125557548$\end{bmatrix}\end{tiny}\]

	\medskip

	\begin{tabular*}{0.75\textwidth}{C{2.2in} R{.4in} R{.4in} R{.4in}}
	\multicolumn{4}{c}{$9_{18}$} \\
	\cline{1-4}\noalign{\smallskip}
	\multirow{12}{*}{\includegraphics[height=1.8in,width=1.8in,keepaspectratio]{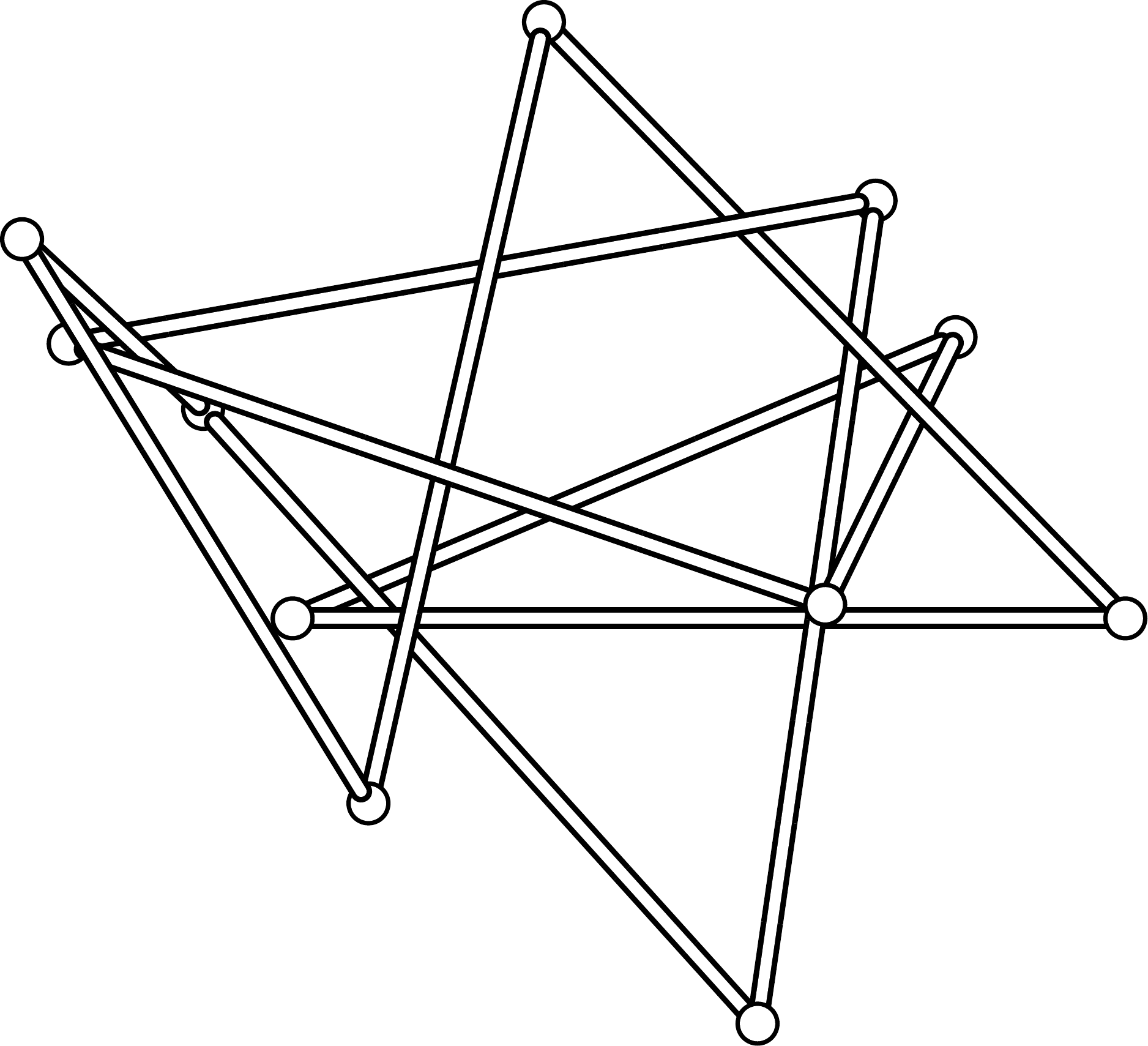}}
	& $0$ & $0$ & $0$ \\
	& $1000$ & $0$ & $0$ \\
	& $302$ & $716$ & $0$ \\
	& $91$ & $-222$ & $275$ \\
	& $-325$ & $455$ & $882$ \\
	& $-107$ & $252$ & $-73$ \\
	& $558$ & $-487$ & $37$ \\
	& $700$ & $502$ & $-23$ \\
	& $-269$ & $330$ & $155$ \\
	& $640$ & $16$ & $432$ \\
	& $796$ & $338$ & $-502$\\
	\end{tabular*}
	 \[\begin{tiny}\begin{bmatrix}$1$ & $1$ & $1$ & $1$ & $1$ & $1$ & \
	$16211286553$ & $1$ & $1$ & $1$ & $1$ \\
	$1$ & $1$ & $30858329$ & $1$ & $1$ & $17762217687$ & $1$ & $1$ & $1$ \
	& $1$ & $1$ \\
	$1$ & $22104539$ & $1$ & $1$ & $1$ & $10564969919$ & $1$ & $1$ & $1$ \
	& $1$ & $728113801$ \\
	$61602923$ & $1$ & $3$ & $1$ & $1$ & $1$ & $1$ & $4024773748$ & $1$ & \
	$74746193045$ & $1$ \\
	$1$ & $22104541$ & $30858331$ & $1$ & $1847148644$ & $1$ & \
	$18892284751$ & $1$ & $1407454648$ & $1$ & $1$ \\
	$54940417$ & $22104541$ & $1$ & $65619096$ & $1$ & $26653970182$ & \
	$1$ & $3832319588$ & $1$ & $1$ & $767409110$ \\
	$11$ & $7$ & $32032131$ & $1$ & $2$ & $5$ & $1$ & $1$ & $1$ & $1$ & \
	$2$ \\
	$11886819$ & $9003228$ & $2839002$ & $2533975$ & $20027306$ & \
	$34502019$ & $1091913663$ & $612386914$ & $304767771$ & $64925682383$ \
	& $282768402$ \\
	$4938585$ & $35780277$ & $2637184$ & $43395440$ & $594214287$ & \
	$221675851$ & $1678374435$ & $4616767263$ & $782809619$ & \
	$3131251157$ & $98095475$ \\
	$108011816$ & $5519263$ & $34323508$ & $46929909$ & $647951814$ & \
	$123826879$ & $5912390181$ & $370257372$ & $897854611$ & $867145469$ \
	& $116125756$ \\
	$54711719$ & $8635998$ & $13161085$ & $79707595$ & $2047261420$ & \
	$74052513$ & $506551563$ & $342704830$ & $529743222$ & $17497321982$ \
	& $400632308$\end{bmatrix}\end{tiny}\]

	\medskip

	\begin{tabular*}{0.75\textwidth}{C{2.2in} R{.4in} R{.4in} R{.4in}}
	\multicolumn{4}{c}{$9_{22}$} \\
	\cline{1-4}\noalign{\smallskip}
	\multirow{12}{*}{\includegraphics[height=1.8in,width=1.8in,keepaspectratio]{9_22.pdf}}
	& $0$ & $0$ & $0$ \\
	& $1000$ & $0$ & $0$ \\
	& $92$ & $419$ & $0$ \\
	& $268$ & $-564$ & $44$ \\
	& $716$ & $266$ & $374$ \\
	& $411$ & $-63$ & $-519$ \\
	& $1029$ & $-280$ & $236$ \\
	& $37$ & $-226$ & $352$ \\
	& $489$ & $10$ & $-509$ \\
	& $879$ & $-326$ & $349$\\
	\end{tabular*}
	\medskip
	\[(107029574,1,1,1,2,97177084,37335363,57495926,18717108,9955666)\]
	
	\medskip

	\begin{tabular*}{0.75\textwidth}{C{2.2in} R{.4in} R{.4in} R{.4in}}
	\multicolumn{4}{c}{$9_{23}$} \\
	\cline{1-4}\noalign{\smallskip}
	\multirow{12}{*}{\includegraphics[height=1.8in,width=1.8in,keepaspectratio]{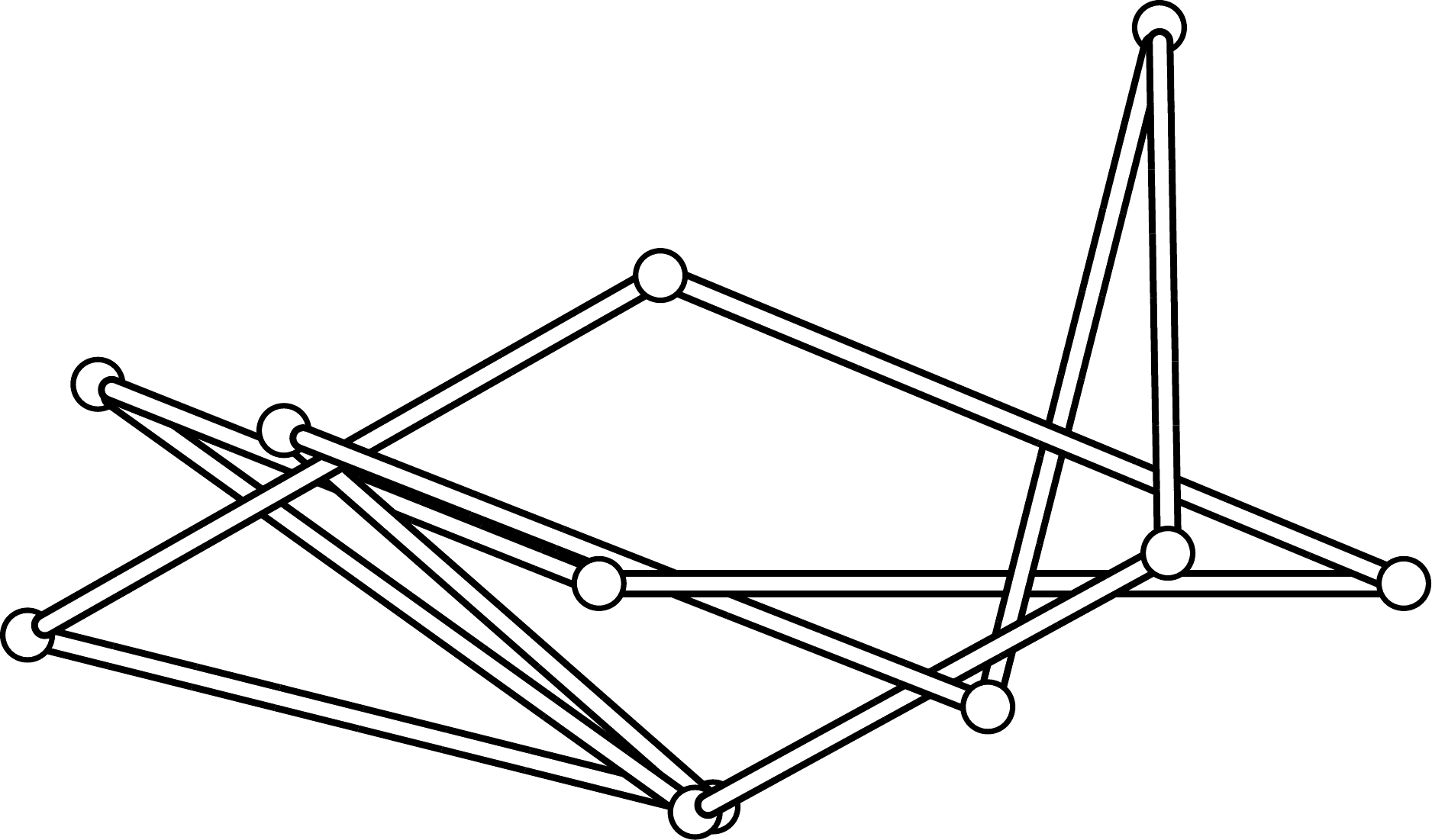}}
	& $0$ & $0$ & $0$ \\
	& $1000$ & $0$ & $0$ \\
	& $76$ & $383$ & $0$ \\
	& $-710$ & $-64$ & $-426$ \\
	& $142$ & $-278$ & $-904$ \\
	& $-392$ & $190$ & $-200$ \\
	& $483$ & $-153$ & $142$ \\
	& $696$ & $691$ & $-349$ \\
	& $707$ & $38$ & $408$ \\
	& $119$ & $-284$ & $-335$ \\
	& $-623$ & $248$ & $-742$ \\
	\end{tabular*}
	 \[\begin{tiny}\begin{bmatrix}$1$ & $1$ & $1$ & $1$ & $1$ & $1$ & $1$ \
	& $1$ & $1$ & $1$ & $1$ \\
	$1$ & $1$ & $1$ & $1$ & $1$ & $1$ & $1$ & $1$ & $1$ & $1$ & $1$ \\
	$113810777$ & $1$ & $1$ & $1$ & $1$ & $1$ & $1$ & $1$ & $1$ & \
	$7776518141$ & $1$ \\
	$1$ & $1$ & $1$ & $594336189$ & $1$ & $8479535543$ & $1$ & $1$ & \
	$30537981917$ & $1$ & $935753973$ \\
	$1$ & $680343456$ & $231960500$ & $1$ & $107828905$ & $1$ & \
	$5139395402$ & $3019840035$ & $1$ & $1$ & $1$ \\
	$1$ & $1$ & $1$ & $896909239$ & $3$ & $1$ & $1$ & $1$ & $15503757477$ \
	& $7501640367$ & $1$ \\
	$119318597$ & $1$ & $234314796$ & $3$ & $107828904$ & $1$ & $1$ & $1$ \
	& $1$ & $1$ & $2$ \\
	$56853036$ & $29567870$ & $124609038$ & $520536059$ & $152277890$ & \
	$2475787461$ & $42803476$ & $27988073$ & $17838739567$ & $248815653$ \
	& $808601845$ \\
	$146069592$ & $433981826$ & $185544858$ & $91242750$ & $6361840$ & \
	$1542757382$ & $680307370$ & $3759013427$ & $1788832557$ & \
	$1483710229$ & $23545428$ \\
	$25221171$ & $1524919025$ & $44185944$ & $62867966$ & $261897966$ & \
	$38201819$ & $4843137239$ & $2250238379$ & $954919009$ & $5402548506$ \
	& $1666174239$ \\
	$7612446$ & $1177159995$ & $28713449$ & $121524435$ & $111634545$ & \
	$10940129573$ & $2356326855$ & $3131764905$ & $391982953$ & \
	$53376286$ & $668217606$\end{bmatrix}\end{tiny}\]

	\medskip

	\begin{tabular*}{0.75\textwidth}{C{2.2in} R{.4in} R{.4in} R{.4in}}
	\multicolumn{4}{c}{$9_{25}$} \\
	\cline{1-4}\noalign{\smallskip}
	\multirow{12}{*}{\includegraphics[height=1.8in,width=1.8in,keepaspectratio]{9_25.pdf}}
	& $0$ & $0$ & $0$ \\
	& $1000$ & $0$ & $0$ \\
	& $478$ & $853$ & $0$ \\
	& $512$ & $-146$ & $26$ \\
	& $-400$ & $249$ & $-92$ \\
	& $562$ & $396$ & $138$ \\
	& $268$ & $96$ & $-769$ \\
	& $-261$ & $438$ & $7$ \\
	& $8$ & $-490$ & $-251$ \\
	& $224$ & $413$ & $-622$ \\
	& $-768$ & $402$ & $-499$ \\
	\end{tabular*}
	 \[\begin{tiny}\begin{bmatrix}$1$ & $1$ & $1$ & $1$ & $1$ & $1$ & $1$ \
	& $1$ & $1$ & $560598895$ & $1$ \\
	$10023997184$ & $1$ & $1$ & $1$ & $1$ & $1$ & $643387175$ & $1$ & \
	$7336018427$ & $653702817$ & $1$ \\
	$1$ & $1$ & $1$ & $1$ & $1$ & $1$ & $1$ & $2343472243$ & $7971644786$ \
	& $1$ & $1688445154$ \\
	$1$ & $1$ & $1$ & $1$ & $1$ & $28666842360$ & $1365051630$ & $1$ & \
	$1$ & $1144406098$ & $1$ \\
	$1$ & $1$ & $1$ & $268087279$ & $1265219757$ & $1$ & $1$ & $1$ & $1$ \
	& $1$ & $1$ \\
	$1$ & $125487117$ & $120699260$ & $1$ & $311867785$ & $1$ & $1$ & $1$ \
	& $1$ & $1$ & $1$ \\
	$6028182429$ & $1$ & $120699263$ & $3$ & $1426407667$ & $1$ & $1$ & \
	$1$ & $1$ & $6$ & $4$ \\
	$4895581470$ & $567322658$ & $102400928$ & $798486118$ & $22035705$ & \
	$194397770$ & $61350220$ & $1120439309$ & $49352735$ & $110472066$ & \
	$1165703994$ \\
	$6760551$ & $74959938$ & $77412490$ & $66008727$ & $26293220$ & \
	$1361890526$ & $123659494$ & $1299711247$ & $2134873307$ & \
	$820466962$ & $582352931$ \\
	$2399354$ & $643239294$ & $303040821$ & $37886430$ & $140201278$ & \
	$31985620272$ & $1413701530$ & $218763732$ & $690714185$ & \
	$131619140$ & $277914126$ \\
	$214770399$ & $108509612$ & $327975748$ & $851232408$ & $74901645$ & \
	$12181335938$ & $1128149711$ & $1693438564$ & $4800658539$ & \
	$11448665$ & $189208084$\end{bmatrix}\end{tiny}\]

	\medskip

	\begin{tabular*}{0.75\textwidth}{C{2.2in} R{.4in} R{.4in} R{.4in}}
	\multicolumn{4}{c}{$9_{27}$} \\
	\cline{1-4}\noalign{\smallskip}
	\multirow{12}{*}{\includegraphics[height=1.8in,width=1.8in,keepaspectratio]{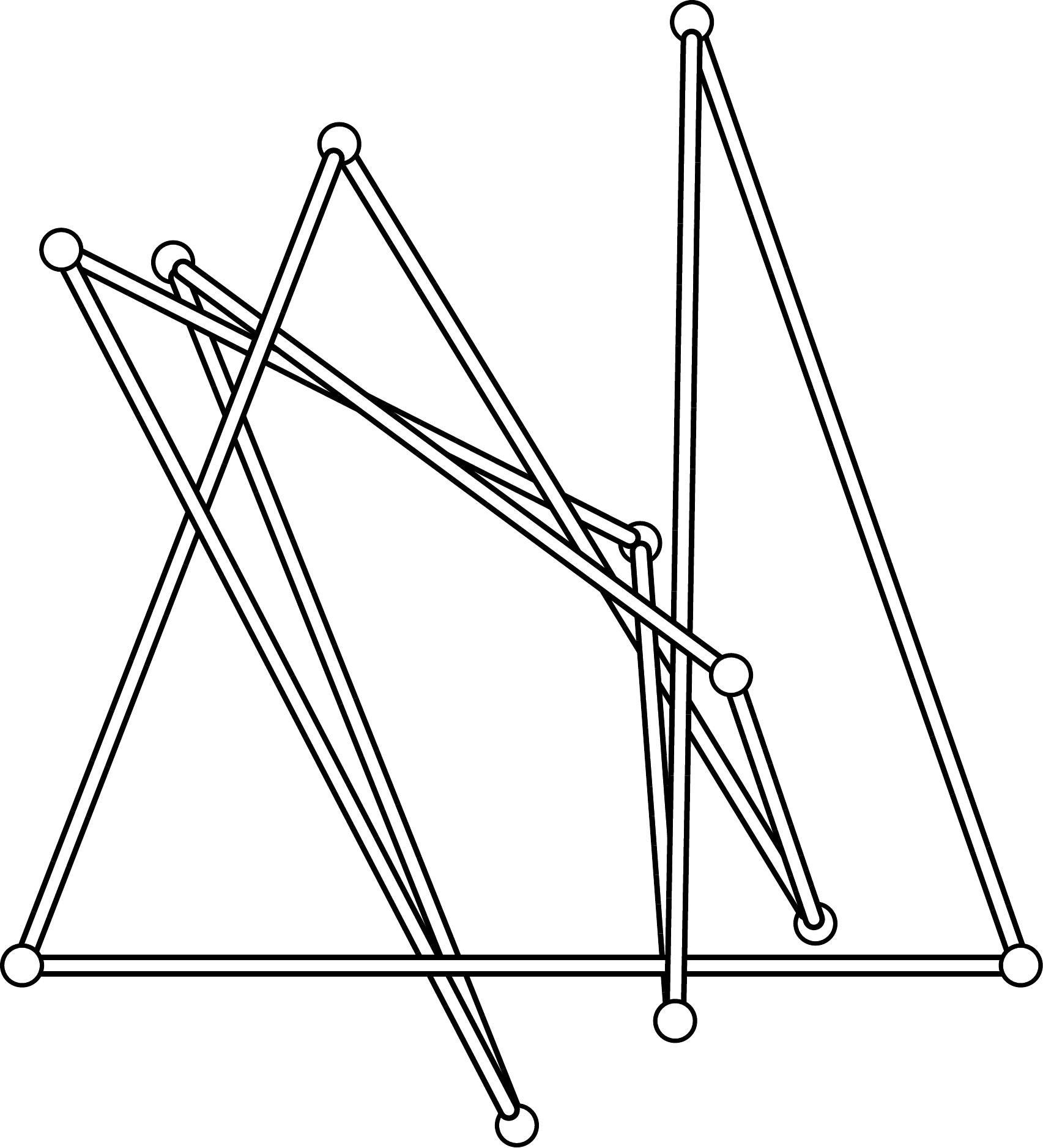}}
	& $0$ & $0$ & $0$ \\
	& $1000$ & $0$ & $0$ \\
	& $671$ & $944$ & $0$ \\
	& $654$ & $-56$ & $8$ \\
	& $619$ & $423$ & $-870$ \\
	& $39$ & $716$ & $-109$ \\
	& $495$ & $-160$ & $-264$ \\
	& $151$ & $704$ & $-632$ \\
	& $710$ & $291$ & $87$ \\
	& $794$ & $42$ & $-877$ \\
	& $318$ & $822$ & $-473$ \\
	\end{tabular*}
	 \[\begin{tiny}\begin{bmatrix}$1$ & $1$ & $262889684$ & $1$ & $1$ & \
	$1$ & $1$ & $1$ & $1$ & $1$ & $1$ \\
	$1$ & $7847390078$ & $1$ & $1$ & $1$ & $1$ & $1$ & $1$ & $1$ & \
	$230952377$ & $1$ \\
	$1$ & $1$ & $1$ & $1$ & $1$ & $1$ & $1$ & $1$ & $873199948$ & $1$ & \
	$1$ \\
	$1$ & $31158284984$ & $1$ & $1$ & $1$ & $1$ & $1280024865$ & \
	$1388695196$ & $1$ & $344675452$ & $1$ \\
	$83053482$ & $1$ & $1$ & $1$ & $5055431027$ & $13805102457$ & $1$ & \
	$1$ & $1413239576$ & $1$ & $15640888756$ \\
	$1$ & $1$ & $4$ & $404554605$ & $1$ & $1$ & $1$ & $2777393120$ & $1$ \
	& $1$ & $1$ \\
	$81$ & $4$ & $575819163$ & $5$ & $1$ & $1$ & $1568405053$ & $1$ & $1$ \
	& $4$ & $1$ \\
	$21769815$ & $29208119282$ & $1068327$ & $73472469$ & $54731667$ & \
	$12281625511$ & $162551761$ & $373062650$ & $291347024$ & $26899937$ \
	& $271603617$ \\
	$35919104$ & $170269626$ & $22441872$ & $384026748$ & $2106979657$ & \
	$224617079$ & $23862425$ & $1356736353$ & $800809090$ & $29063382$ & \
	$2532544202$ \\
	$47831262$ & $129202033$ & $783370605$ & $42382366$ & $1213807343$ & \
	$8531371491$ & $1291553665$ & $1812626691$ & $675753926$ & \
	$158354530$ & $13607358180$ \\
	$34156617$ & $649855006$ & $163560582$ & $186536581$ & $4106403181$ & \
	$1664690609$ & $629983557$ & $209503010$ & $751141194$ & $447257698$ \
	& $7967627966$\end{bmatrix}\end{tiny}\]

	\medskip

	\begin{tabular*}{0.75\textwidth}{C{2.2in} R{.4in} R{.4in} R{.4in}}
	\multicolumn{4}{c}{$9_{30}$} \\
	\cline{1-4}\noalign{\smallskip}
	\multirow{12}{*}{\includegraphics[height=1.8in,width=1.8in,keepaspectratio]{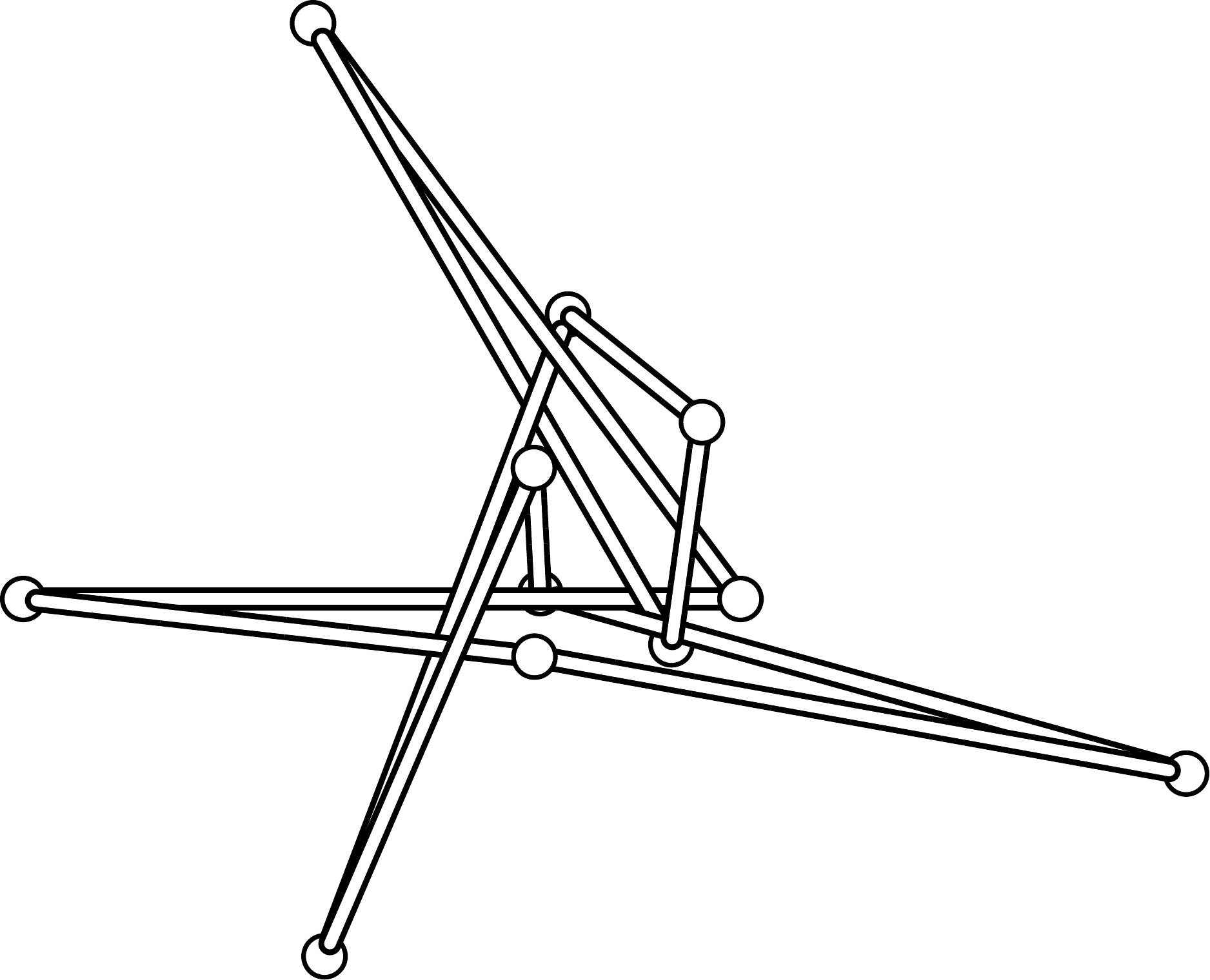}}
	& $0$ & $0$ & $0$ \\
	& $1000$ & $0$ & $0$ \\
	& $404$ & $803$ & $0$ \\
	& $904$ & $-63$ & $2$ \\
	& $946$ & $247$ & $952$ \\
	& $759$ & $397$ & $-19$ \\
	& $420$ & $-498$ & $270$ \\
	& $712$ & $183$ & $942$ \\
	& $721$ & $8$ & $-43$ \\
	& $1621$ & $-243$ & $312$ \\
	& $713$ & $-80$ & $697$ \\
	\end{tabular*}
	 \[\begin{tiny}\begin{bmatrix}$1$ & $1$ & $1$ & $1$ & $1$ & $1$ & $1$ \
	& $1$ & $1$ & $1$ & $1$ \\
	$12437473$ & $1$ & $1$ & $1$ & $1$ & $1$ & $1$ & $577487407$ & $1$ & \
	$1$ & $1$ \\
	$1$ & $1$ & $1$ & $1$ & $1$ & $1$ & $260639873$ & $1$ & $1$ & $1$ & \
	$1$ \\
	$1$ & $1$ & $1$ & $3310686559$ & $1$ & $421434401$ & $1$ & $1$ & \
	$6368992901$ & $1$ & $1207138066$ \\
	$3$ & $1406257395$ & $24702488$ & $1$ & $8178655003$ & $1$ & $1$ & \
	$1$ & $1$ & $1$ & $703355439$ \\
	$1$ & $1$ & $1$ & $3735581387$ & $1$ & $380751573$ & $282519864$ & \
	$670519498$ & $1$ & $25064316807$ & $4$ \\
	$77477129$ & $1$ & $49404989$ & $3$ & $7389135180$ & $7$ & $2$ & $1$ \
	& $1$ & $4$ & $1$ \\
	$8161120$ & $744540803$ & $7732102$ & $2850370473$ & $32838207$ & \
	$51224612$ & $5804151$ & $60698763$ & $1914312628$ & $43894377$ & \
	$1692119784$ \\
	$47463080$ & $944557761$ & $31229795$ & $534698731$ & $303461273$ & \
	$86414122$ & $3214384$ & $421910632$ & $2643878907$ & $825827562$ & \
	$30561789$ \\
	$87465935$ & $1090002507$ & $46067476$ & $224985073$ & $3350742742$ & \
	$86138733$ & $234567480$ & $191288209$ & $843984412$ & $34590412498$ \
	& $10989181$ \\
	$6400283$ & $1135573793$ & $39123181$ & $1918292057$ & $34826475$ & \
	$11693541$ & $35597081$ & $173958066$ & $5788098369$ & $29129657180$ \
	& $224365433$\end{bmatrix}\end{tiny}\]

	\medskip

	\begin{tabular*}{0.75\textwidth}{C{2.2in} R{.4in} R{.4in} R{.4in}}
	\multicolumn{4}{c}{$9_{31}$} \\
	\cline{1-4}\noalign{\smallskip}
	\multirow{12}{*}{\includegraphics[height=1.8in,width=1.8in,keepaspectratio]{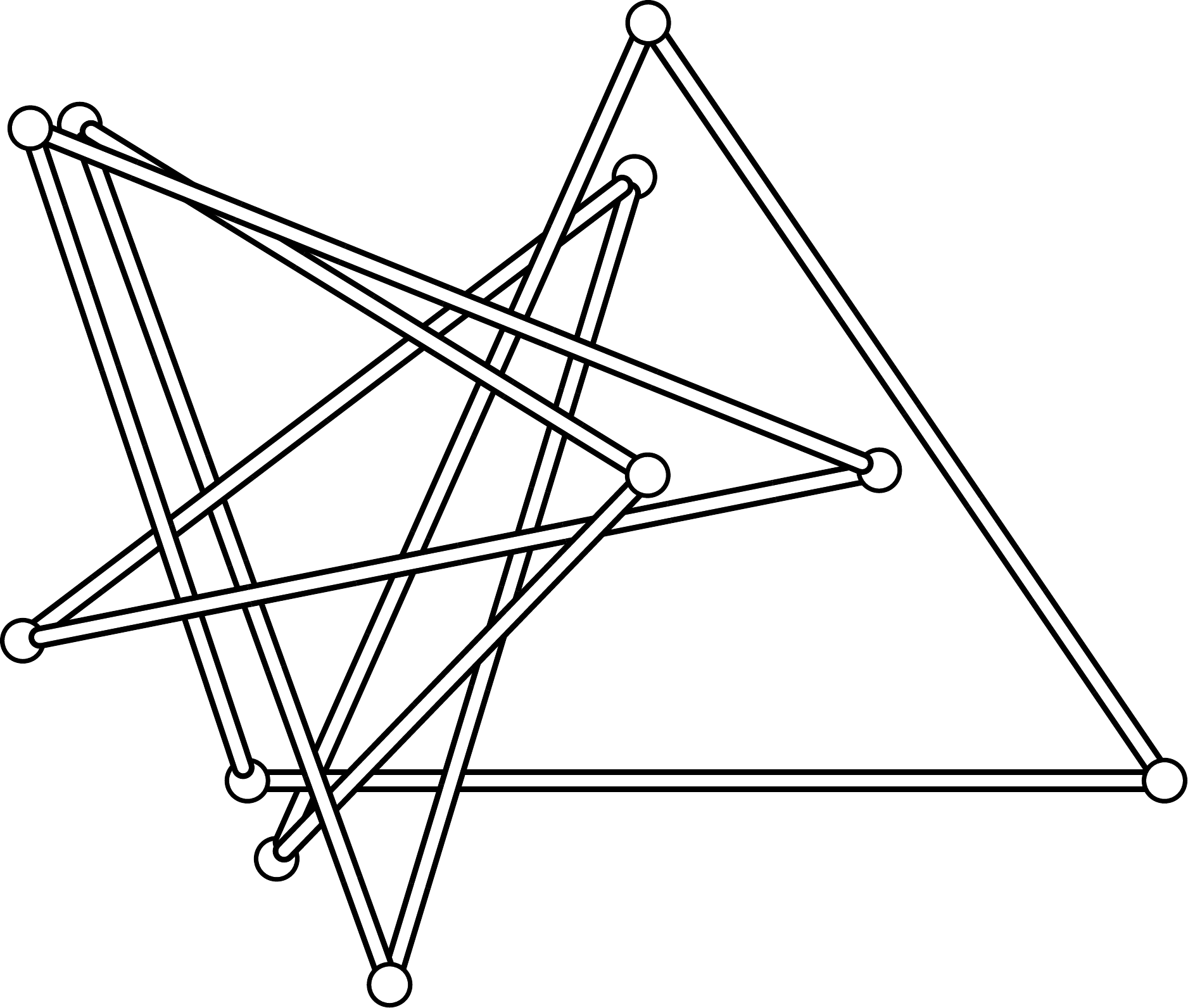}}
	& $0$ & $0$ & $0$ \\
	& $1000$ & $0$ & $0$ \\
	& $437$ & $826$ & $0$ \\
	& $32$ & $-85$ & $-69$ \\
	& $436$ & $333$ & $744$ \\
	& $-183$ & $715$ & $58$ \\
	& $155$ & $-223$ & $135$ \\
	& $422$ & $658$ & $-255$ \\
	& $-245$ & $153$ & $292$ \\
	& $689$ & $338$ & $598$ \\
	& $-237$ & $712$ & $662$ \\
	\end{tabular*}
	 \[\begin{tiny}\begin{bmatrix}$1$ & $1$ & $1$ & $1$ & $1$ & $1$ & $1$ \
	& $1$ & $1$ & $1$ & $1$ \\
	$491876281$ & $1$ & $1$ & $1$ & $1$ & $1$ & $1$ & $1$ & $3354528574$ \
	& $1$ & $1$ \\
	$1$ & $1$ & $1$ & $1$ & $1$ & $1$ & $1$ & $1$ & $2525361769$ & \
	$158087629$ & $46864636843$ \\
	$1$ & $25510362$ & $1$ & $1$ & $1$ & $1$ & $1$ & $11242255527$ & $1$ \
	& $1$ & $1$ \\
	$454261886$ & $1$ & $1$ & $5284164485$ & $1$ & $1813691147$ & \
	$2885517387$ & $1$ & $6821899449$ & $1$ & $1$ \\
	$1$ & $25510364$ & $59543495$ & $1$ & $5803156562$ & $925296372$ & \
	$1$ & $1$ & $1$ & $1$ & $1$ \\
	$8$ & $127551842$ & $89315266$ & $2442529200$ & $2468012324$ & $1$ & \
	$1$ & $1$ & $1$ & $9$ & $1$ \\
	$258214242$ & $12180665$ & $10277724$ & $5054439091$ & $288241295$ & \
	$49962721$ & $134140596$ & $7144416519$ & $666443081$ & $11291115$ & \
	$36313021745$ \\
	$48857123$ & $33735396$ & $35752426$ & $857072468$ & $102767719$ & \
	$458418730$ & $1543535133$ & $1470609912$ & $3230572056$ & $56332267$ \
	& $2577828447$ \\
	$10527292$ & $168498480$ & $127077814$ & $61184503$ & $6816702585$ & \
	$1391694775$ & $2754689006$ & $999523140$ & $742278861$ & $70868761$ \
	& $374408352$ \\
	$63841640$ & $38539193$ & $82612705$ & $5192781$ & $2488892900$ & \
	$1690661440$ & $3538544765$ & $9879562560$ & $106931688$ & \
	$211428838$ & $33734183487$\end{bmatrix}\end{tiny}\]

	\medskip

	\begin{tabular*}{0.75\textwidth}{C{2.2in} R{.4in} R{.4in} R{.4in}}
	\multicolumn{4}{c}{$9_{36}$} \\
	\cline{1-4}\noalign{\smallskip}
	\multirow{12}{*}{\includegraphics[height=1.8in,width=1.8in,keepaspectratio]{9_36.pdf}}
	& $0$ & $0$ & $0$ \\
	& $1000$ & $0$ & $0$ \\
	& $567$ & $902$ & $0$ \\
	& $-185$ & $294$ & $-256$ \\
	& $759$ & $245$ & $71$ \\
	& $318$ & $816$ & $764$ \\
	& $646$ & $-13$ & $311$ \\
	& $718$ & $908$ & $-72$ \\
	& $-183$ & $662$ & $285$ \\
	& $718$ & $517$ & $694$ \\
	& $406$ & $897$ & $-177$ \\
	\end{tabular*}
	 \[\begin{tiny}\begin{bmatrix}$1$ & $1$ & $1$ & $1$ & $1$ & $1$ & $1$ \
	& $1$ & $1$ & $1$ & $1$ \\
	$1$ & $1$ & $1$ & $1$ & $1$ & $1$ & $1$ & $12523027847$ & $1$ & $1$ & \
	$1$ \\
	$5426124359$ & $1$ & $1$ & $1$ & $1424202531$ & $1$ & $1$ & $1$ & $1$ \
	& $6584192388$ & $8992884808$ \\
	$1$ & $1$ & $1$ & $12223967$ & $1$ & $1$ & $263408915$ & \
	$29465794187$ & $9986903746$ & $1$ & $1$ \\
	$495044898$ & $5412091732$ & $100403683$ & $1$ & $3$ & $212199095$ & \
	$1$ & $1$ & $1$ & $1$ & $1$ \\
	$1$ & $1$ & $1$ & $3$ & $2159516409$ & $1$ & $65852229$ & $1$ & $1$ & \
	$6627781141$ & $1$ \\
	$1$ & $1$ & $38906659$ & $19412935$ & $1$ & $53049774$ & $39336205$ & \
	$1$ & $1$ & $1$ & $1$ \\
	$1125551611$ & $1735612295$ & $29180733$ & $858339$ & $1557767516$ & \
	$11511945$ & $88068805$ & $355250231$ & $6884251289$ & $36701288$ & \
	$398991934$ \\
	$1241803067$ & $3170128948$ & $69866228$ & $11625485$ & $267607237$ & \
	$79522441$ & $20150485$ & $2006088206$ & $2575693266$ & $1056878715$ \
	& $2505218830$ \\
	$45910998$ & $355246092$ & $26475285$ & $6280015$ & $89877512$ & \
	$374500950$ & $1677801$ & $145553542$ & $1448979271$ & $824513076$ & \
	$3204959462$ \\
	$3901711428$ & $1642348279$ & $12533499$ & $2078963$ & $40849202$ & \
	$426090556$ & $184771845$ & $27474391863$ & $3948141106$ & \
	$5211154861$ & $7748788486$\end{bmatrix}\end{tiny}\]

	\medskip

	\begin{tabular*}{0.75\textwidth}{C{2.2in} R{.4in} R{.4in} R{.4in}}
	\multicolumn{4}{c}{$11n_{72}$} \\
	\cline{1-4}\noalign{\smallskip}
	\multirow{12}{*}{\includegraphics[height=1.8in,width=1.8in,keepaspectratio]{K11n72.pdf}}
	& $0$ & $0$ & $0$ \\
	& $1000$ & $0$ & $0$ \\
	& $289$ & $703$ & $0$ \\
	& $920$ & $-69$ & $74$ \\
	& $166$ & $135$ & $-550$ \\
	& $858$ & $408$ & $118$ \\
	& $211$ & $-313$ & $368$ \\
	& $359$ & $248$ & $-446$ \\
	& $865$ & $433$ & $396$ \\
	& $589$ & $-346$ & $-167$ \\
	& $805$ & $549$ & $225$\\
	\end{tabular*}

	\medskip

	\begin{tabular*}{0.75\textwidth}{C{2.2in} R{.4in} R{.4in} R{.4in}}
	\multicolumn{4}{c}{$11n_{77}$} \\
	\cline{1-4}\noalign{\smallskip}
	\multirow{12}{*}{\includegraphics[height=1.8in,width=1.8in,keepaspectratio]{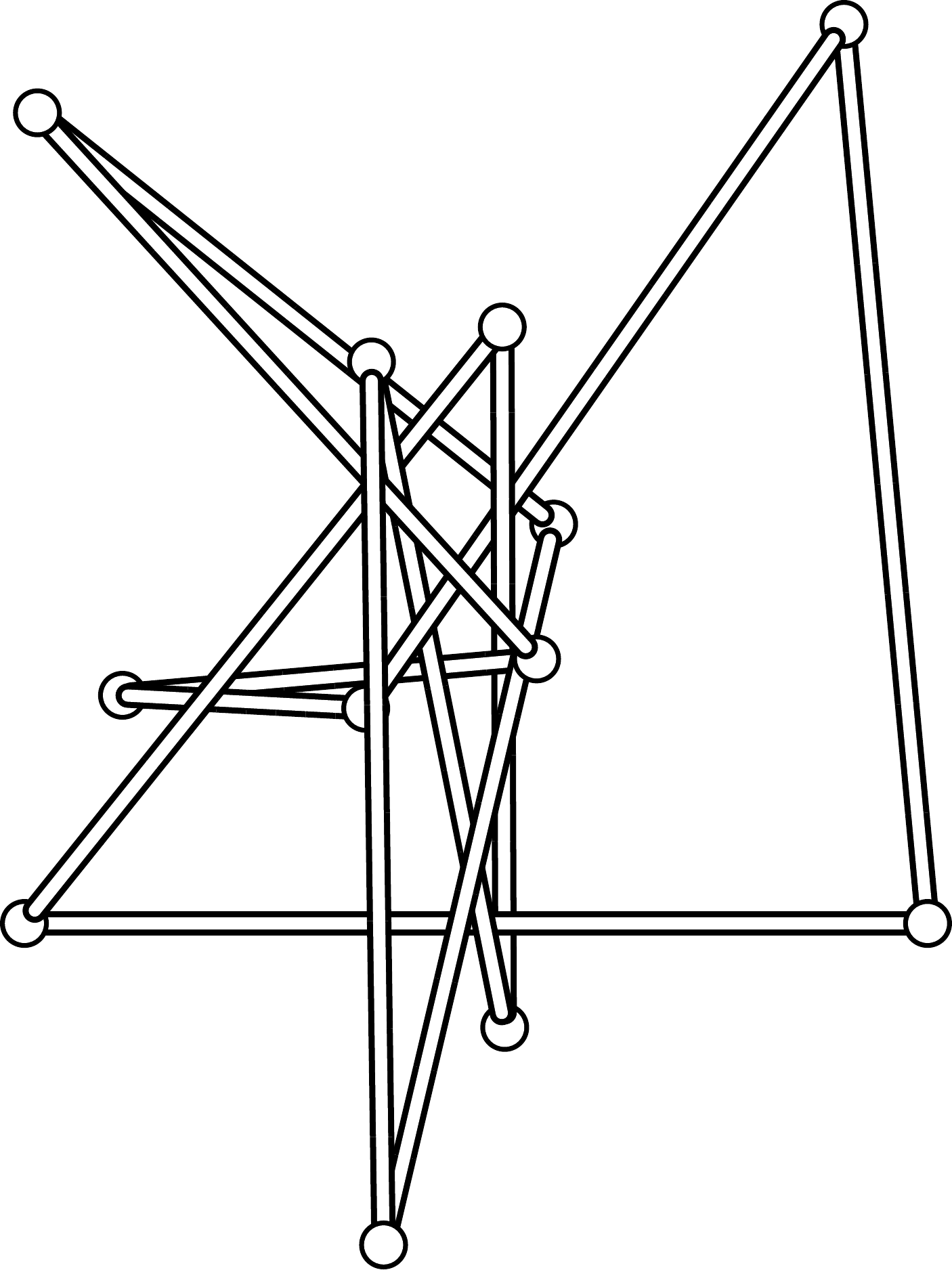}}
	& $0$ & $0$ & $0$ \\
	& $1000$ & $0$ & $0$ \\
	& $908$ & $996$ & $0$ \\
	& $378$ & $238$ & $382$ \\
	& $109$ & $253$ & $-581$ \\
	& $567$ & $293$ & $307$ \\
	& $14$ & $898$ & $880$ \\
	& $586$ & $442$ & $198$ \\
	& $398$ & $-356$ & $770$ \\
	& $384$ & $622$ & $561$ \\
	& $532$ & $-115$ & $-98$ \\
	& $529$ & $661$ & $533$ \\
	\end{tabular*}
	\[(1,1,4360454070,1,1,1,4129928398,1,3083050732,480679674,2231790712,131796380)\]

	\medskip

	\begin{tabular*}{0.75\textwidth}{C{2.2in} R{.4in} R{.4in} R{.4in}}
	\multicolumn{4}{c}{$12n_{60}$} \\
	\cline{1-4}\noalign{\smallskip}
	\multirow{12}{*}{\includegraphics[height=1.8in,width=1.8in,keepaspectratio]{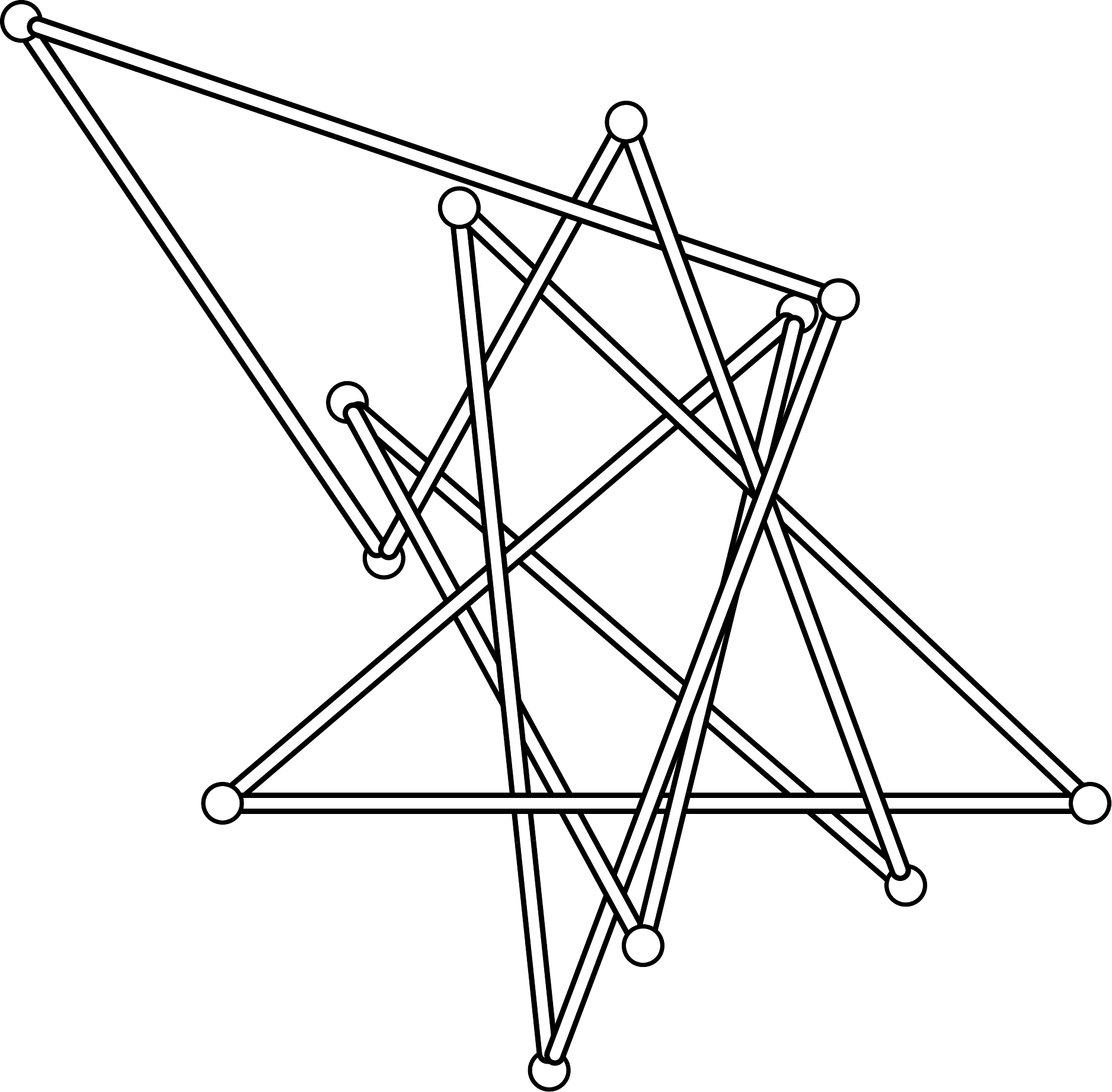}}
	& $0$ & $0$ & $0$ \\
	& $1000$ & $0$ & $0$ \\
	& $273$ & $687$ & $0$ \\
	& $377$ & $-308$ & $-27$ \\
	& $710$ & $581$ & $289$ \\
	& $-232$ & $901$ & $190$ \\
	& $186$ & $282$ & $-475$ \\
	& $465$ & $785$ & $343$ \\
	& $788$ & $-95$ & $-7$ \\
	& $144$ & $462$ & $-532$ \\
	& $485$ & $-164$ & $169$ \\
	& $662$ & $565$ & $-492$ \\
	\end{tabular*}
	\[(1,1,1,251677634,1,1,221757579,5,29397800,2012040,102253303,35434657)\]

	\medskip

	\begin{tabular*}{0.75\textwidth}{C{2.2in} R{.4in} R{.4in} R{.4in}}
	\multicolumn{4}{c}{$12n_{66}$} \\
	\cline{1-4}\noalign{\smallskip}
	\multirow{12}{*}{\includegraphics[height=1.8in,width=1.8in,keepaspectratio]{K12n66.pdf}}
	& $0$ & $0$ & $0$ \\
	& $1000$ & $0$ & $0$ \\
	& $318$ & $731$ & $0$ \\
	& $-568$ & $607$ & $448$ \\
	& $323$ & $204$ & $238$ \\
	& $-350$ & $406$ & $-474$ \\
	& $-378$ & $262$ & $515$ \\
	& $413$ & $277$ & $-97$ \\
	& $-460$ & $-210$ & $-129$ \\
	& $-130$ & $523$ & $466$ \\
	& $420$ & $-86$ & $-106$ \\
	& $-96$ & $740$ & $124$ \\
	& $865$ & $501$ & $-13$ \\
	\end{tabular*}
	 \[\setlength\arraycolsep{2pt}\begin{tiny}\begin{bmatrix}$1$ & $1$ & $1$ & $1$ & $1$ & $1$ & $1$ \
	& $1$ & $1$ & $1$ & $1$ & $1$ & $1$ \\
	$1$ & $1$ & $1$ & $1$ & $1$ & $1$ & $1$ & $1$ & $1$ & $1$ & $1$ & $1$ \
	& $1059699457$ \\
	$1$ & $1$ & $1$ & $1$ & $1$ & $1$ & $1$ & $1$ & $1$ & $1$ & $1$ & \
	$2185397635$ & $1$ \\
	$1$ & $1$ & $70743408$ & $11295467$ & $118856988$ & $1$ & $1$ & $1$ & \
	$1$ & $1$ & $1$ & $824304355$ & $1$ \\
	$1367392$ & $26039894$ & $950806273$ & $1$ & $1$ & $1314590355$ & $1$ \
	& $5140073610$ & $1$ & $1$ & $1$ & $1$ & $1$ \\
	$1$ & $1$ & $4$ & $1$ & $1$ & $1$ & $1581888017$ & $1$ & $1$ & \
	$2526630451$ & $1$ & $1996669148$ & $232048990$ \\
	$1$ & $2$ & $1$ & $1$ & $1$ & $1664395333$ & $1$ & $2917208982$ & \
	$6703631479$ & $1$ & $1773528185$ & $1$ & $1$ \\
	$2$ & $1$ & $1$ & $18$ & $213805076$ & $1$ & $1453122638$ & \
	$7367098343$ & $1$ & $574251305$ & $1$ & $1$ & $1$ \\
	$22$ & $67668619$ & $10$ & $12092360$ & $1$ & $1$ & $1$ & $1$ & $1$ & \
	$1$ & $1$ & $1$ & $2$ \\
	$543662$ & $755483$ & $946422359$ & $6992$ & $45253597$ & $358554124$ \
	& $38580721$ & $707787260$ & $14859163$ & $105984642$ & $155554867$ & \
	$554679128$ & $8970836$ \\
	$8453350$ & $20702372$ & $56711902$ & $98579$ & $127573372$ & \
	$212625391$ & $1042794420$ & $2338921294$ & $3018257581$ & \
	$1941994036$ & $1435576911$ & $1438067031$ & $725385343$ \\
	$5322026$ & $10050811$ & $13967658$ & $1505123$ & $72512945$ & \
	$2319156951$ & $517848979$ & $678503703$ & $4892804723$ & $169858433$ \
	& $2031348577$ & $444888144$ & $426467670$ \\
	$12969476$ & $63055476$ & $1262659$ & $741133$ & $1571326$ & \
	$1789635077$ & $2070400100$ & $39285759$ & $2181390421$ & \
	$1436032321$ & $3297066674$ & $1478778192$ & \
	$69197486$\end{bmatrix}\end{tiny}\]

	\medskip

	\begin{tabular*}{0.75\textwidth}{C{2.2in} R{.4in} R{.4in} R{.4in}}
	\multicolumn{4}{c}{$12n_{219}$} \\
	\cline{1-4}\noalign{\smallskip}
	\multirow{12}{*}{\includegraphics[height=1.8in,width=1.8in,keepaspectratio]{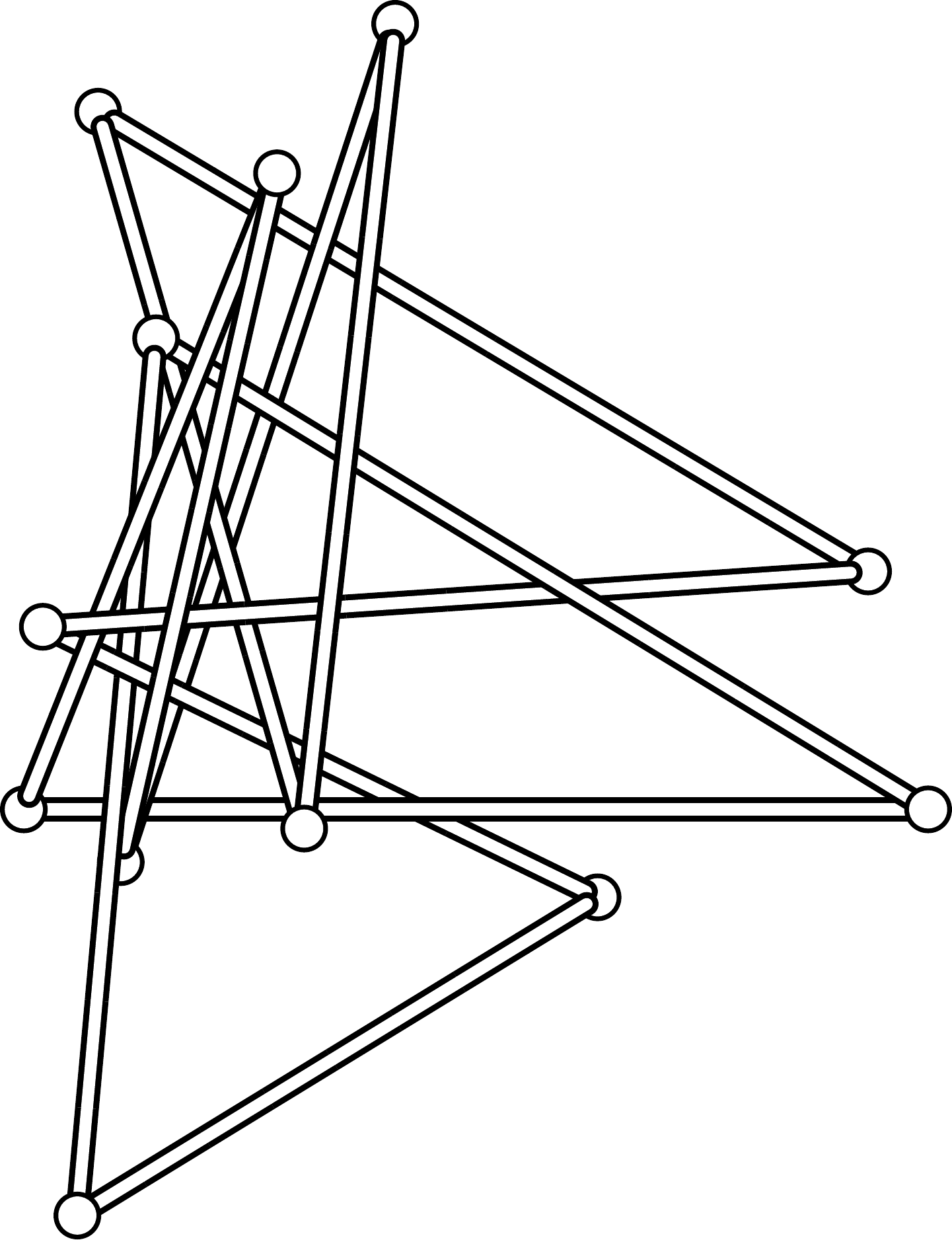}}
	& $0$ & $0$ & $0$ \\
	& $1000$ & $0$ & $0$ \\
	& $146$ & $521$ & $0$ \\
	& $59$ & $-449$ & $229$ \\
	& $635$ & $-97$ & $-510$ \\
	& $21$ & $202$ & $221$ \\
	& $934$ & $263$ & $-183$ \\
	& $82$ & $771$ & $-314$ \\
	& $310$ & $-21$ & $252$ \\
	& $411$ & $868$ & $-194$ \\
	& $107$ & $-58$ & $29$ \\
	& $280$ & $703$ & $654$ \\
	\end{tabular*}
	\[(1,1,4642857341,1,1,1,369809977,1,3334333618,657101686,872056242,112908399)\]

	\medskip

	\begin{tabular*}{0.75\textwidth}{C{2.2in} R{.4in} R{.4in} R{.4in}}
	\multicolumn{4}{c}{$12n_{225}$} \\
	\cline{1-4}\noalign{\smallskip}
	\multirow{12}{*}{\includegraphics[height=1.8in,width=1.8in,keepaspectratio]{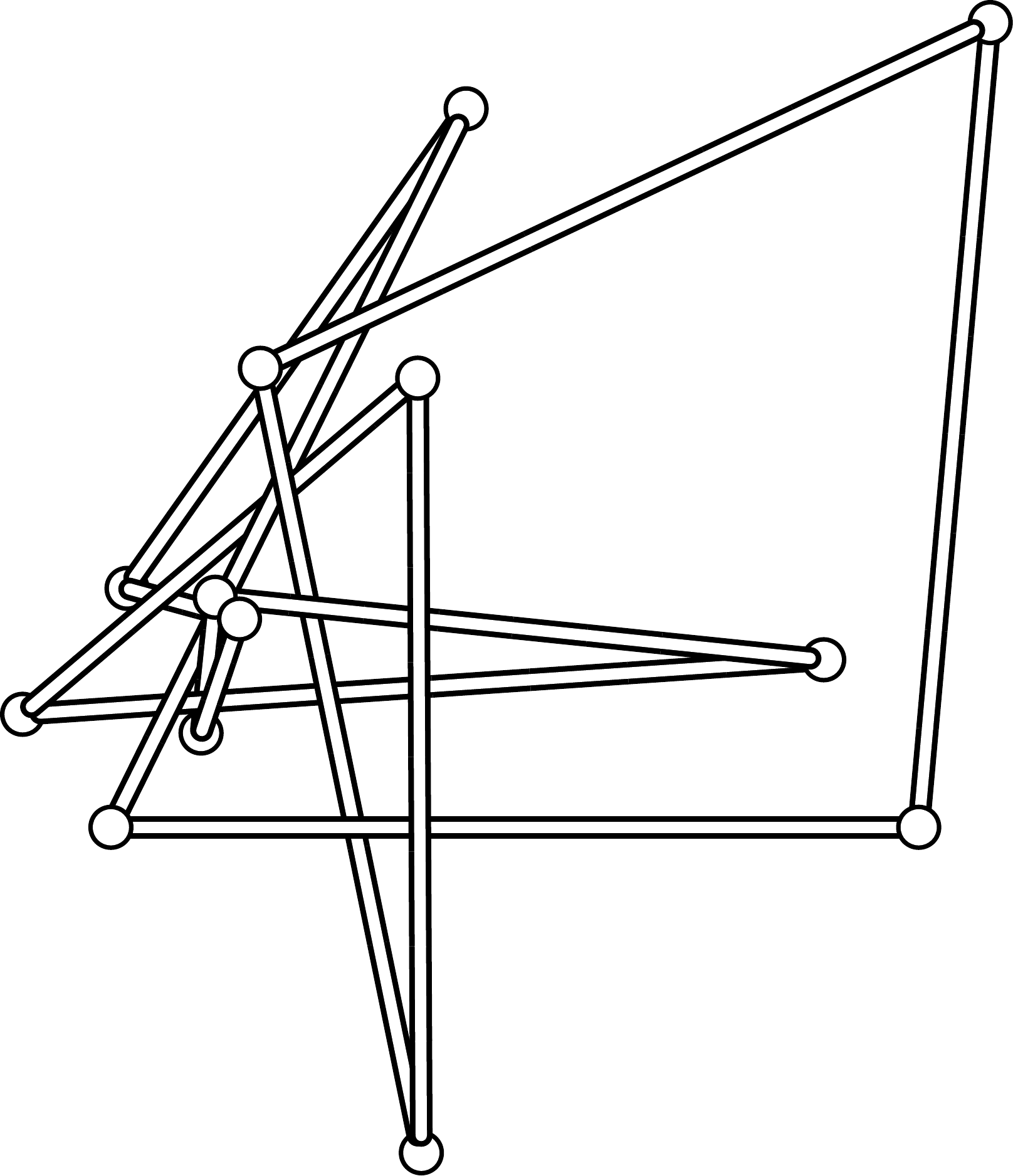}}
	& $0$ & $0$ & $0$ \\
	& $1000$ & $0$ & $0$ \\
	& $1089$ & $996$ & $0$ \\
	& $185$ & $568$ & $26$ \\
	& $384$ & $-403$ & $-105$ \\
	& $380$ & $556$ & $179$ \\
	& $-109$ & $140$ & $-588$ \\
	& $882$ & $207$ & $-479$ \\
	& $129$ & $285$ & $175$ \\
	& $112$ & $117$ & $-811$ \\
	& $159$ & $258$ & $178$ \\
	& $20$ & $296$ & $-812$ \\
	& $440$ & $889$ & $-125$ \\
	\end{tabular*}
	 \[\setlength\arraycolsep{2pt}\begin{tiny}\begin{bmatrix}$1$ & $1$ & $1$ & $1$ & $1$ & $1$ & $1$ \
	& $1$ & $1$ & $1$ & $1$ & $1$ & $1$ \\
	$1$ & $1$ & $1$ & $1$ & $1$ & $1$ & $1$ & $1$ & $1$ & $1$ & $1$ & $1$ \
	& $1$ \\
	$3751762692$ & $1$ & $1$ & $1$ & $1$ & $1$ & $1$ & $1$ & $1$ & $1$ & \
	$1$ & $163228078$ & $148375996$ \\
	$1$ & $7160242151$ & $260235613$ & $1$ & $1$ & $1$ & $1$ & $1$ & $1$ \
	& $1$ & $1$ & $198569226$ & $1$ \\
	$1$ & $1$ & $1$ & $38177126$ & $1$ & $1$ & $1$ & $6847817933$ & $1$ & \
	$1$ & $3581695$ & $1$ & $1$ \\
	$1$ & $1$ & $36573690$ & $1$ & $1$ & $1$ & $1$ & $1$ & $1$ & \
	$111133340$ & $11587746$ & $1$ & $683454611$ \\
	$1$ & $1$ & $1$ & $1$ & $1$ & $1$ & $29787011635$ & $5080355336$ & \
	$1262278581$ & $45119884$ & $1$ & $1$ & $1$ \\
	$1$ & $3580121079$ & $1$ & $1$ & $1$ & $3940844432$ & $1$ & \
	$3374642279$ & $512482248$ & $1$ & $1$ & $1$ & $1$ \\
	$3$ & $1$ & $1$ & $329721838$ & $439165299$ & $1$ & $5$ & $1$ & $1$ & \
	$5$ & $8$ & $1$ & $1$ \\
	$167594410$ & $2869603788$ & $2128863$ & $64630156$ & $71964989$ & \
	$3731403869$ & $49275204$ & $56450326$ & $336533$ & $94941175$ & \
	$84830$ & $41137220$ & $2068298$ \\
	$1088664213$ & $162014574$ & $8093340$ & $191907757$ & $370465301$ & \
	$1673424440$ & $260733778$ & $802736034$ & $993049610$ & $20483053$ & \
	$43256189$ & $302803346$ & $162835796$ \\
	$3378798968$ & $2000442164$ & $113361702$ & $217292196$ & $497047907$ \
	& $178470956$ & $4479118220$ & $228075334$ & $343333001$ & $6910835$ \
	& $4848523$ & $9094738$ & $514929315$ \\
	$2713806913$ & $630680383$ & $285764261$ & $15855082$ & $263187947$ & \
	$478384430$ & $32256762603$ & $170712380$ & $16313219$ & $1073038$ & \
	$46353686$ & $500545558$ & $88464298$\end{bmatrix}\end{tiny}\]

	\medskip

	\begin{tabular*}{0.75\textwidth}{C{2.2in} R{.4in} R{.4in} R{.4in}}
	\multicolumn{4}{c}{$12n_{553}$} \\
	\cline{1-4}\noalign{\smallskip}
	\multirow{12}{*}{\includegraphics[height=1.8in,width=1.8in,keepaspectratio]{K12n553.pdf}}
	& $0$ & $0$ & $0$ \\
	& $1000$ & $0$ & $0$ \\
	& $46$ & $301$ & $0$ \\
	& $704$ & $-423$ & $-209$ \\
	& $603$ & $-122$ & $740$ \\
	& $84$ & $-378$ & $-77$ \\
	& $869$ & $84$ & $337$ \\
	& $173$ & $-619$ & $480$ \\
	& $186$ & $190$ & $-107$ \\
	& $696$ & $-273$ & $618$ \\
	& $906$ & $417$ & $-74$ \\
	\end{tabular*}

\end{center}

\bibliography{stickknots-special,stickknots}

\end{document}